\pgfplotsset{compat=newest}
\newtheorem{theorem}[equation]{Theorem}
\newtheorem{proposition}[equation]{Proposition}
\newtheorem{lemma}[equation]{Lemma}
\newtheorem{corollary}[equation]{Corollary}
\numberwithin{equation}{subsection} 
\newcommand{\heartful}{\ensuremath\varheartsuit}
\theoremstyle{definition}
\newtheorem{definition}[equation]{Definition}
\newtheorem{remark}[equation]{Remark}
\newtheorem{example}[equation]{Example}
\tikzstyle arrowstyle=[scale=1]
\newcommand{\B}{\mathcal{B}}
\newcommand{\C}{\mathcal{C}}
\newcommand{\Z}{\mathbb{Z}}
\newcommand{\Q}{\mathbb{Q}}
\renewcommand{\H}{\mathrm{H}}
\newcommand{\Qp}{\mathbb{Q}_p}
\newcommand{\rank}{\mathrm{rank}}
\newcommand{\Dim}{\mathrm{dim}}
\newcommand{\Ker}{\mathrm{Ker}}
\newcommand{\Image}{\mathrm{Im}}
\newcommand{\Coker}{\mathrm{Coker}}
\newcommand{\Gal}{\mathrm{Gal}}
\newcommand{\Hom}{\mathrm{Hom}}
\newcommand{\Id}{\mathrm{Id}}
\newcommand{\Gr}{\mathrm{Gr}}
\newcommand{\et}{\textrm{\'et}}
\newcommand{\Ainf}{A_\mathrm{inf}}
\newcommand{\dr}{\mathrm{dR}}
\newcommand{\Os}{\mathcal{O}}
\newcommand{\Ocpflat}{\mathcal O_{\mathbb C_p^{\flat}}}
\newcommand{\Ocp}{\mathcal O_{\mathbb C_p}}
\newcommand{\A}{\mathcal{A}}
\newcommand{\End}{\mathrm{End}}
\newcommand{\dR}{\mathrm{dR}}
\newcommand{\Sh}{\mathrm{Sh}}
\newcommand{\sh}{\mathrm{sh}}
\newcommand{\F}{\mathbb{F}}
\newcommand{\gb}{\mathrm gb}
\DeclareMathOperator{\Br}{Br}
\DeclareMathOperator{\fil}{fil}
\DeclareMathOperator{\rsw}{rsw}
\DeclareMathOperator{\ev}{ev}
\DeclareMathOperator{\Det}{det}
\DeclareMathOperator{\TS}{TS}
\DeclareMathOperator{\NS}{NS}
\DeclareMathOperator{\Tr}{Tr}
\make@display@tag\ltx@label{#1}}}}}}
\begin{document}
\title{Wild Brauer classes via prismatic cohomology}
\date{\today}
\author{Emiliano Ambrosi}
\address{Institut de Recherche Math\'{e}matique Avanc\'{e}e (IRMA)\\
 Universit\'{e} de Strasbourg\\
 Strasbourg\\
 France}
\email{eambrosi@unistra.fr}

\author{Rachel Newton}
\address{Department of Mathematics\\
King's College London\\
Strand\\ London\\ 
UK}
\email{rachel.newton@kcl.ac.uk}

\author{Margherita Pagano}
\address{Department of Mathematics\\ South Kensington Campus\\
Imperial College London\\
London\\ 
UK}
\email{m.pagano@imperial.ac.uk}

\begin{abstract}
Let $K$ be a finite extension of $\mathbb{Q}_p$ and $X$ a smooth proper $K$-variety with good reduction. Under a mild assumption on the behaviour of Hodge numbers under reduction modulo $p$, we prove that the existence of a non-zero global 2-form on $X$ implies,  after a finite extension of $K$, the existence of $p$-torsion Brauer classes with surjective evaluation map. This implies that any smooth proper variety over a number field which satisfies weak approximation over all finite extensions has no non-zero global 2-form. The proof is based on a prismatic interpretation of  Brauer classes with eventually constant evaluation, and a Newton-above-Hodge result for the mod $p$ reduction of prismatic cohomology. This generalises work of Bright and the second-named author beyond the ordinary reduction case.  
\end{abstract}
\maketitle
\tableofcontents
\section{Introduction}
Let $K$ be a finite extension of $\Q_p$, with residue field $k$. Let $X$ be a smooth proper variety over $K$ with good reduction. In this paper we use recent developments in $p$-adic cohomology to demonstrate the existence (and in some cases construction) of $p$-torsion Brauer classes of arithmetic interest. 
\subsection{Brauer classes with non-constant evaluation map}
\subsubsection{Main result}
Recall that each $\A\in \Br(X)$ induces, for every finite field extension $L/K$, an evaluation map 
$$\ev^L_\A :X(L)\rightarrow \Br(L)\simeq \Q/\Z$$
sending $Q\in X(L)$ to $Q^*\A$. Consider the following very mild condition on the Hodge numbers of the special and generic fibres: 
  \begin{equation}\tag*{\heartful}\label{eq : equalityhodgenumber}
        \Dim_{k}(\H^i(Y,\Omega_{Y/k}^j))=	\Dim_{K}(\H^i(X,\Omega_{X/K}^j))\quad \text{ for every } i,j\geq 0,
    \end{equation}
    where $Y/k$ is the special fibre with respect to a smooth proper model of $X$. Our main result is the following. 
\begin{theorem}\label{thm : main}
	Let $\mathcal X$ be a smooth proper model of $X$ with special fibre $Y$. Assume that \ref{eq : equalityhodgenumber} holds.  If $\H^0(X,\Omega^2)\neq 0$, then there exists a finite field extension $L/K$ and $\A\in \Br(X_L)[p]$ such that $\ev^F_{\A}: X(F)\to \Br(F)[p]$ is surjective for every finite field extension $F/L$. 
\end{theorem}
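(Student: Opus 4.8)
The plan is to combine two ingredients provided earlier in the paper: the prismatic interpretation of the classes in $\Br(X)[p]$ whose evaluation map becomes constant after a finite field extension --- call these the \emph{eventually constant} classes --- and the Newton-above-Hodge estimate for the Frobenius on the mod $p$ reduction of prismatic cohomology. Replacing $K$ by a finite extension at the outset, I would assume $\mu_p\subseteq K$, so that $\Br(F)[p]\simeq\F_p$ canonically for every finite $F/K$ and Kummer theory is transparent; good reduction is preserved. Using purity along the special fibre of a smooth proper model together with the comparison between $\Br[p]$ and $\F_p(1)$-syntomic cohomology, I would first reduce the theorem to two tasks: (i) produce, after a further finite extension $L/K$, a nonzero class $\A$ in the ``wild quotient'' $\Br(X_L)[p]/\{\text{eventually constant}\}$, built explicitly from the $2$-form; and (ii) check that this particular $\A$ has $\ev^F_{\A}\colon X(F)\to\Br(F)[p]$ surjective for every finite $F/L$. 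Task (i) is where the $2$-form and the Newton-above-Hodge machinery are used; task (ii) is a local computation on the special fibre.

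For (i), the first point is to move the $2$-form to the special fibre while controlling dimensions. By condition \ref{eq : equalityhodgenumber}, $\Dim_k\H^0(Y,\Omega^2_{Y/k})=\Dim_K\H^0(X,\Omega^2_{X/K})\geq 1$, so there is a nonzero $\omega\in\H^0(Y,\Omega^2_{Y/k})$; moreover \ref{eq : equalityhodgenumber} forces the Hodge and de Rham cohomology sheaves of a fixed smooth proper model $\mathcal X/\Ok$ to be locally free of the expected ranks with degenerating Hodge--de Rham spectral sequence, so that $\omega$ lifts to a genuine piece of $F^2\H^2_{\dR}(\mathcal X/\Ok)$. Via the conjugate filtration and the Cartier isomorphism, $\omega$ thereby determines a nonzero line in the top graded piece of the Nygaard (Hodge) filtration on the mod $p$ prismatic cohomology of $\mathcal X$ in degree $2$.

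The crux of (i) --- and, I expect, the main obstacle of the whole argument --- is to show that this line survives into the wild quotient using only Newton-above-Hodge in place of the ordinarity hypothesis of \cite{BNwild}. After the Tate twist built into $\F_p(1)$, the Newton-above-Hodge inequality forces the divided Frobenius on the graded pieces of positive Nygaard weight --- in particular on the line carrying $\omega$ --- to have only positive slopes, hence to be topologically nilpotent; thus the relevant operator ``$\varphi_1-1$'' is bijective there, and the syntomic cohomology group governing $\Br(X)[p]$ acquires from this line a nonzero contribution sitting in positive Nygaard weight with a nontrivial Frobenius twist, which therefore does not extend to a smooth proper model after any finite base change, i.e.\ is not eventually constant. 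In the ordinary case of \cite{BNwild} the slope filtration on the mod $p$ prismatic cohomology is split and this contribution is literally a direct summand; here one must run the argument in the absence of such a splitting, and the technical heart is to verify that the contribution of $\omega$ is neither absorbed by lower-slope pieces nor annihilated, using only the polygon inequality. A second, more routine point is that after a finite extension $L/K$ the resulting class is defined over $X_L$ and, after possibly enlarging $L$ further, is not killed by the contribution of $\Pic$, so that it genuinely defines an element $\A\in\Br(X_L)[p]$ mapping to a nonzero wild class.

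Finally, for (ii), the class $\A$ has by construction refined Swan conductor along the special fibre equal, up to a unit, to $\omega$. Enlarge $L$ if necessary so that its residue field is large enough to apply Lang--Weil to the non-vanishing locus of $\omega$ on $Y$. Given a finite $F/L$, choose a closed point $\bar Q$ of the special fibre of a model of $X_F$ at which $\omega$ does not vanish, lift $\bar Q$ by Hensel's lemma to a family of $F$-points of $X$ approaching $\bar Q$, and evaluate $\A$ along this family. The two-dimensional local field computation --- Kato's higher-dimensional residue, exactly as in \cite{BNwild} --- expresses $\ev^F_{\A}$ on this family as the residue pairing of $\omega_{\bar Q}$ against the leading term of the direction of approach; since $\omega_{\bar Q}\neq 0$ this is a nonzero $\F_p$-linear functional of that leading term, hence already surjective onto $\Br(F)[p]\simeq\F_p$. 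As $F/L$ was arbitrary, this yields the theorem.
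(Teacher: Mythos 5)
Your high-level decomposition is the right one: reduce to (i) producing a class that does not become boring after any finite extension, via the identification of $\Br(X_{\overline K})[p]/\Br(X_{\overline K})^{\gb}[p]$ with a prismatic object, and (ii) upgrading non-constancy to surjectivity via the refined Swan conductor and rational points on the special fibre. Part (ii) is essentially Proposition~\ref{prop : braueruseful} and Theorem~\ref{thm : summaryBN}(2) and is fine. The reduction in (i) via strict henselisation, vanishing cycles and the syntomic/prismatic description of $\Br[p]$ is also the paper's route (Theorem~\ref{thm : Brauerandhenselisation}, Corollary~\ref{cor : Brauervanishing}, Theorem~\ref{thm : prismaticinterpretationkernel}).

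The genuine gap is exactly where you flag ``the technical heart'': you assert that Newton-above-Hodge forces the divided Frobenius on the graded piece carrying $\omega$ to have positive slope and hence be topologically nilpotent, and then you acknowledge that in the non-ordinary case one must still ``verify that the contribution of $\omega$ is neither absorbed by lower-slope pieces nor annihilated, using only the polygon inequality'' --- but you never supply that verification, and the mechanism you propose does not supply it. Two points. First, the slope of an individual graded piece of a $\varphi$-stable filtration of $\H^2(\mathcal X_{\Ocp},\Delta/p)$ is not a well-defined invariant: the paper's Example~\ref{ex : slope} exhibits two different $\varphi$-stable filtrations of the same module with different ``slopes'' on the graded pieces (even with opposite monotonicity), so there is no well-posed sense in which ``the line carrying $\omega$ has slope $\geq 2$'' which you could feed into a nilpotence argument. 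Second, even granting non-vanishing of $\varphi$ on the reduction, this is explicitly insufficient: the introduction points out that knowing $\varphi\otimes\overline k\neq 0$ does not control $\dim\Ker(\varphi-d)$, and Example~\ref{ex : slopedfixedpoint} gives a rank-$2$ module $M$ with $\TS(M)=1$ whose $(\varphi=d)$-eigenspace nevertheless has the full dimension $2$ --- the precise pathology that a local, piece-by-piece argument fails to rule out. So the argument you sketch really is the ordinary-case argument of~\cite{BNwild}, and the step that would let it survive the loss of the slope splitting is the one left unproved.

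What the paper does instead is genuinely different: it does not track the $\omega$-class at all. Theorem~\ref{thm : jumpofdimension} is proved by contradiction via a global dimension count. The total slope of $\H^2(\mathcal X_{\Ocp},\Delta/p)$ is well defined (Proposition~\ref{prop : existence filtration}(2)) and is computed by the height of the Hodge polygon (Lemma~\ref{lem : hodgevsnewton}), which in turn equals $\sum_i i\cdot h^{i,2-i}$ (Proposition~\ref{prop : hodgevsgeometrichodge}), i.e. $\TS=\rank$ by Hodge symmetry. The semilinear-algebra Lemma~\ref{lem : d-fixedpoints} then says: if $\dim_{\F_p}\Ker(\varphi-d)$ were equal to the rank, the module would split as a direct sum of copies of $\mathcal O(-1)$, forcing the Hodge polygon to be a straight line of slope $1$, hence $h^{2,0}=0$, a contradiction. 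This nonconstructive count is precisely the replacement for ``the $\omega$-line survives,'' and nothing in your proposal supplies it. (Note also that your claim, as stated, is slightly stronger than the theorem: the paper does not show that the particular class built from $\omega$ is wild in general --- it only does that for abelian varieties using Dieudonn\'e theory in Section~\ref{sec : abelian varieties} --- so even a successful version of your task (i) would be proving more than is needed or known.)

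Two smaller inaccuracies worth fixing. Even in the ordinary case, the divided Frobenius on the Nygaard weight-$1$ graded piece ($\H^1(\Omega^1)$) has slope $0$, not positive, so ``positive Nygaard weight $\Rightarrow$ positive slope of $\varphi_1$'' is wrong as stated; the relevant piece is weight $\geq 2$. And ``does not extend to a smooth proper model after any finite base change'' is not quite the right characterisation of $\Br^{\gb}$: as in Proposition~\ref{prop:geomfil0}, eventually-constant is controlled by $\fil_0$, which strictly contains $\Br(\mathcal X)[p]$ in general; one needs the residue map $\partial$ to vanish too, possibly only after a further finite extension.
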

    Since classes $\A\in \Br(X)$ with prime-to-$p$ order have constant evaluation map after a finite extension\footnote{Indeed, for $n$ coprime to $p$, smooth proper base change gives isomorphisms $\H^2(X_{\overline K},\Z/n)\simeq \H^2(\mathcal X_{\mathcal O_{\overline K}},\Z/n)\simeq \H^2(Y_{\overline k},\Z/n)$. Hence, for all $\mathcal A\in \Br(X)[n]$ there exists a finite extension $K\subseteq L$ such that $\mathcal A_{L}$ lifts to $\Br(\mathcal X_{\mathcal O_L})$, so that $\ev^L_{\mathcal A}$ factors through $\Br(\mathcal O_{L})=0$.}, Theorem \ref{thm : main} gives classes of the smallest possible order. The methods used in the proof of Theorem \ref{thm : main} allow us to go beyond merely showing the existence of arithmetically interesting Brauer classes. For products of elliptic curves, we show how to construct such classes, generalising previous constructions in the literature, see Proposition~\ref{prop : elliptic curves} and Corollary~\ref{cor : elliptic}. For abelian varieties of positive $p$-rank and their associated Kummer varieties, we give lower bounds for the number of interesting Brauer classes, see Theorem~\ref{thm : abelian and kumm varieties}.
    \subsubsection{Global applications}
    The main motivations behind Theorem \ref{thm : main} are the following global corollaries. 
    Assume now that $Z$ is smooth proper variety over a number field $M$ such that $\H^0(Z,\Omega^2)\neq 0$. Then, by generic freeness, the base change of $Z$ to the completion at a place satisfies \ref{eq : equalityhodgenumber} for all but finitely many places. Since any $\alpha\in \Br(Z)$ with non-constant evaluation on the points over a completion of $M$ obstructs weak approximation for $M$-rational points (see e.g.\ \cite[Chapter 13]{CoSko}), Theorem \ref{thm : main} implies the following: 
    \begin{corollary}\label{cor : weak}
	Let $Z$ be a smooth proper variety over a number field satisfying weak approximation over all finite extensions. Then $\H^0(Z,\Omega^2)=0$.
\end{corollary}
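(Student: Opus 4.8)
The plan is to prove the contrapositive in a strong form: if $Z$ is smooth proper over a number field $M$ and $\H^0(Z,\Omega^2)\neq 0$, then $Z$ fails weak approximation over some finite extension of $M$. First I would reduce to the case $Z(M)\neq\emptyset$: the residue field of a closed point of $Z$ is a finite extension $M_0/M$ with $Z(M_0)\neq\emptyset$, the hypothesis $\H^0(Z,\Omega^2)\neq 0$ is preserved under the flat base change to $M_0$ (indeed $\H^0(Z_{M_0},\Omega^2)=\H^0(Z,\Omega^2)\otimes_M M_0$), and a failure of weak approximation for $Z_{M_0}$ over a finite extension of $M_0$ is in particular one over a finite extension of $M$; so we may replace $M$ by $M_0$, and then $Z(M_v)\neq\emptyset$ for every place $v$ of $M$.

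Next, spread $Z$ out to a smooth proper morphism $f\colon\mathcal Z\to\Spec\Os_M[1/N]$. By generic flatness together with cohomology and base change, after enlarging $N$ each $R^if_*\Omega^j_{\mathcal Z/\Os_M[1/N]}$ is locally free and its formation commutes with arbitrary base change. Hence for every place $v\nmid N$ the $M_v$-variety $X:=Z_{M_v}$ has good reduction with model $\mathcal X:=\mathcal Z\otimes_{\Os_M}\Os_{M_v}$, whose special fibre $Y$ over the residue field $k$ satisfies \ref{eq : equalityhodgenumber} — both sides of \ref{eq : equalityhodgenumber} being the rank of the corresponding $R^if_*\Omega^j$ — and moreover $\H^0(X,\Omega^2)=\H^0(Z,\Omega^2)\otimes_M M_v\neq 0$. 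Fixing such a $v$ and applying Theorem~\ref{thm : main}, we obtain a finite extension $L/M_v$ and a class $\A\in\Br(X_L)[p]$ whose evaluation map $\ev^F_{\A}$ is surjective — in particular non-constant, since $\Br(F)[p]\neq 0$ — for every finite extension $F/L$.

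It remains to globalise. By Krasner's lemma there is a finite extension $M'/M$ carrying a place $w\mid v$ together with an $M_v$-algebra isomorphism $M'_w\cong L$; note that $Z(M')\neq\emptyset$ and that it now suffices to contradict weak approximation for $Z_{M'}$. The key step — and the one I expect to be the main obstacle — is to descend $\A$ to a class $\B\in\Br(Z_{M'})$, at the cost of a further finite extension of $M'$ (and of $w$); this enlargement is harmless, because $\ev^F_{\A}$ is surjective for \emph{every} finite $F/L$, so the local evaluation of $\B$ at $w$ remains surjective onto $\Br(M'_w)[p]\neq 0$, hence non-constant. Here one uses that the geometric Brauer group $\Br(Z_{\overline{M'}})[p]$ is finite (so that, after a finite extension, the geometric class underlying $\A$ is Galois-invariant), followed by an analysis of the Hochschild--Serre spectral sequence relating $\Br(Z_{M'})$ to $\Br(Z_{\overline{M'}})$: the obstruction in $H^3(\Gal(\overline{M'}/M'),\Gm)$ vanishes because this group is zero for a number field, and the obstruction in $H^2(\Gal(\overline{M'}/M'),\Pic(Z_{\overline{M'}}))$ is killed after a further finite extension — e.g.\ using that $H^2(\Gal(\overline{M'}/M'),\Pic^0(Z_{\overline{M'}}))$ vanishes once $M'$ is totally imaginary and that the remaining, finitely generated, contribution from $\NS$ is annihilated by passing to the fixed field of the relevant characters. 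This descent is carried out as in \cite{BNwild}.

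Finally, $Z(M')\neq\emptyset$ gives $Z(M'_u)\neq\emptyset$ for every place $u$ of $M'$; and for all but finitely many $u$ both $Z$ has good reduction and $\B$ is unramified at $u$, so $\ev^{M'_u}_{\B}$ is identically zero there, while $\ev^{M'_w}_{\B}$ is non-constant. Therefore the set $\{(P_u)\in\prod_u Z(M'_u):\sum_u\inv_u\B(P_u)=0\}$ is closed and a proper subset of $\prod_u Z(M'_u)$ (altering only the $w$-coordinate of a fixed adelic point changes the sum), and by global reciprocity it contains $Z(M')$. Hence $Z(M')$ is not dense in $\prod_u Z(M'_u)$, contradicting weak approximation over the finite extension $M'/M$. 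The only genuinely delicate point is the descent of the Brauer class in the third step; everything else is the standard spreading-out-and-reciprocity package.
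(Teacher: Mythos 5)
Your overall strategy is correct and matches the one the paper intends (the paper leaves the globalisation implicit, appealing to generic freeness and to the invariance of \'etale cohomology under algebraically closed field extensions): reduce to $Z(M)\neq\emptyset$, spread out and use generic freeness to find a place $v$ where \ref{eq : equalityhodgenumber} and good reduction hold, invoke Theorem~\ref{thm : main}, descend to a number field, and conclude by reciprocity.

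The one step where your argument is both overbuilt and partly dubious is the descent. You do not need Hochschild--Serre at all. Since $\Br=\H^2_{\et}(-,\Gm)$ for our smooth proper $Z$ and \'etale cohomology commutes with filtered colimits of base fields, one has $\Br(Z_{\overline M})=\mathrm{colim}_{N}\Br(Z_N)$ over finite subextensions $N/M$ of $\overline M$. By invariance of \'etale cohomology under extensions of algebraically closed fields, $\Br(Z_{\overline M})[p]\cong\Br(X_{\overline{M_v}})[p]$ and this identification respects the subgroup of geometrically boring classes. So the non-boring class produced by Theorem~\ref{thm : main} (equivalently, by Theorems~\ref{thm : Brauerandhenselisation}--\ref{thm : jumpofdimension}) already lies in $\Br(Z_{\overline M})[p]$ and is hit by some $\B\in\Br(Z_N)[p]$; no spectral-sequence bookkeeping is required. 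Within your version, the specific assertion that $\H^2(\Gal(\overline{M'}/M'),\Pic^0(Z_{\overline{M'}}))$ vanishes for $M'$ totally imaginary is not a standard fact and I do not think it is true in general; luckily you never need it. The Krasner step is likewise avoidable: once $\B\in\Br(Z_N)$ maps to a non-boring class, the very definition of $\Br(X_{\overline{K}})^{\gb}$ guarantees that $\B$ localised at any place $v'\mid v$ of $N$ has non-constant evaluation over every finite extension of $N_{v'}$, and Proposition~\ref{prop : braueruseful} upgrades this to surjectivity after a further finite, unramified enlargement. Your concluding reciprocity argument is correct. So the proof goes through, but the middle third can be replaced by the filtered-colimit observation, which is what the paper has in mind.
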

Corollary~\ref{cor : weak} answers a question of Wittenberg (see \cite[Question 1.4]{BNwild}), which is a special case of his more general question asking whether varieties over number fields that satisfy the Hasse principle and weak approximation over all finite extensions are geometrically rationally connected, hence have vanishing extremal Hodge numbers. 

Theorem \ref{thm : main} actually implies something stronger. Recall that a place $v$ of $M$ is said to be potentially relevant to the Brauer--Manin obstruction to weak approximation on $Z$ if there exist a finite extension $N/M$, a place $v'$ of $N$ lying over $v$, and $\mathcal A\in \Br(Z_N)$ such that $\ev^{N_{v'}}_\A$ is non-constant.
\begin{corollary}\label{cor : potrel}
    	Let $Z$ be a smooth proper variety over a number field $M$ such that $\H^0(Z,\Omega^2)\neq 0$. Then all but finitely many places of $M$ are potentially relevant to the Brauer--Manin obstruction to weak approximation on $Z$.
\end{corollary}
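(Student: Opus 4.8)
The plan is to deduce the statement place by place from Theorem~\ref{thm : main}, the real content being a descent of Brauer classes from a $p$-adic field to a number field.

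First I would spread $Z$ out to a smooth proper model $f\colon \mathcal Z\to\Spec\Os_M[1/S_0]$ over the ring of $S_0$-integers of $M$ for a suitable finite set of places $S_0$, enlarging $S_0$ so that $\mathcal Z$ has good reduction at every $v\notin S_0$ and, by generic freeness applied to the sheaves $R^if_*\Omega^j_{\mathcal Z/\Os_M[1/S_0]}$, so that \ref{eq : equalityhodgenumber} holds for $Z_{M_v}/M_v$ for every $v\notin S_0$ (the observation preceding Corollary~\ref{cor : weak}). For such $v$ we also have $\H^0(Z_{M_v},\Omega^2)=\H^0(Z,\Omega^2)\otimes_MM_v\neq0$ by flat base change, so Theorem~\ref{thm : main} applied to $X=Z_{M_v}$ over $K=M_v$ yields a finite extension $L/M_v$ and a class $\A_0\in\Br(Z_L)[p]$ with $\ev^F_{\A_0}\colon Z(F)\to\Br(F)[p]$ surjective (hence non-constant) for all finite $F/L$. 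By Krasner's lemma we may realise $L$ as the completion $N_w$ of some finite extension $N/M$ at a place $w\mid v$. It then suffices to produce a finite extension $N'/N$, a place $w'$ of $N'$ above $w$, and a class $\widetilde\A\in\Br(Z_{N'})$ whose evaluation map on $Z(N'_{w'})$ is non-constant; then $v$ is potentially relevant, and since $S_0$ is finite the corollary follows.

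For the descent, let $\beta\in\Br(Z_{\overline M})[p]$ be the class matching the image of $\A_0$ in $\Br(Z_{\overline{M_v}})[p]$ under the isomorphism $\Br(Z_{\overline M})[p]\xrightarrow{\sim}\Br(Z_{\overline{M_v}})[p]$ provided by invariance of étale cohomology under extensions of algebraically closed fields ($Z$ being smooth proper, these are finite groups). Enlarging $N$, assume $\beta$ is $\Gal(\overline M/N)$-invariant and the action on $\NS Z_{\overline M}$ is trivial. Since $N'$ is a number field, $H^3(\Gal(\overline M/N'),\Gm)=0$, so by the Hochschild--Serre spectral sequence the only obstruction to lifting $\beta$ to $\Br(Z_{N'})$ is a $p$-torsion class $\mathrm{ob}(N')\in H^2(\Gal(\overline M/N'),\Pic Z_{\overline M})$, functorial in $N'$. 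Decomposing $\Pic Z_{\overline M}$ via $0\to\Pic^0Z_{\overline M}\to\Pic Z_{\overline M}\to\NS Z_{\overline M}\to0$, using finiteness of $\NS/p$ and $\Pic^0[p]$, the fact that every line bundle on $Z_{\overline M}$ is defined over a finite extension of $M$, and the elementary vanishing $\varinjlim_{N'}H^{\geq1}(\Gal(\overline M/N'),A)=0$ for finite coefficient modules $A$, one finds a finite $N'/N$ with $\mathrm{ob}(N')=0$; fix such an $N'$ and a lift $\widetilde\A\in\Br(Z_{N'})$, and let $w'\mid w$ be a place of $N'$, so $N'_{w'}\supseteq N_w=L$. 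Now $\widetilde\A|_{N'_{w'}}$ and $\A_0|_{N'_{w'}}$ have the same image $\beta$ in $\Br(Z_{\overline{N'_{w'}}})$, hence differ by an algebraic class in $\Br_1(Z_{N'_{w'}})$. An algebraic Brauer class over a $p$-adic field becomes constant after a finite extension (same ingredients: the decomposition of $\Pic$, reduction to finite coefficients, vanishing of $\varinjlim H^{\geq1}$, and spreading out line bundles), so — enlarging $N'$ once more, which is harmless as $\A_0|_{N'_{w'}}$ retains surjective evaluation — we may assume $\widetilde\A|_{N'_{w'}}-\A_0|_{N'_{w'}}$ is constant, equal to some $c\in\Br(N'_{w'})$. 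Then $\ev^{N'_{w'}}_{\widetilde\A}=\ev^{N'_{w'}}_{\A_0}+c$ has image $\Br(N'_{w'})[p]+c$, a set of size $p\geq2$, so $\ev^{N'_{w'}}_{\widetilde\A}$ is non-constant, as required.

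The main obstacle is the descent, specifically the two facts it uses: that a $p$-torsion geometric Brauer class is, after a finite extension of the base number field, the reduction of a genuine Brauer class, and that an algebraic Brauer class over a $p$-adic field becomes constant after a finite extension. The delicate point in each is that the relevant cohomology class does not literally have finite coefficients: one has to peel off its Néron--Severi component first (which dies for an elementary reason once the coefficients are made trivial), land in $H^{*}$ of the identity component of the Picard scheme, and only there reduce to finite coefficients — this last reduction being where the divisibility of $\Pic^0$ over an algebraically closed field and the vanishing of $\varinjlim_L H^{\geq1}$ of the absolute Galois group of $L$ (over finite extensions $L$, with finite coefficients) are used. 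Once these two facts are in place, the remaining bookkeeping is routine.
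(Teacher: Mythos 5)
Your proof is correct, but it takes a considerably more elaborate path than the paper intends. The real work you isolate is descending a Brauer class from $\overline{M}$ to a finite extension of $M$, and for this you run a Hochschild--Serre lifting argument (vanishing of $H^3(\Gal(\overline M/N'),\Gm)$ for the number field $N'$, killing a $p$-torsion obstruction class in $H^2(\Gal(\overline M/N'),\Pic Z_{\overline M})$ via the $\Pic^0/\NS$ decomposition and passage to finite coefficients), followed by a Krasner step realising $L$ as a completion of a number field and a final comparison between the lift $\widetilde{\A}$ and $\A_0$ over a common local field. All of this can be short-circuited. Since \'etale cohomology commutes with filtered colimits of rings and $\Br$ is the torsion in $\H^2(-,\Gm)$, one has $\Br(Z_{\overline M})=\varinjlim_{N/M\text{ finite}}\Br(Z_N)$ directly, so any $p$-torsion geometric class is already the restriction of a class over some finite extension $N/M$ --- no spectral-sequence lifting is needed (this is the ``invariance of \'etale cohomology'' the paper invokes, together with compatibility with limits). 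Moreover, working with the group $\Br^{\gb}$ removes both the Krasner step and the final comparison: the paper notes that Theorem~\ref{thm : main} is equivalent to $\Br(Z_{\overline{M_v}})^{\gb}[p]\subsetneq\Br(Z_{\overline{M_v}})[p]$; pick $\widetilde\A$ in the complement, transport it to $\beta\in\Br(Z_{\overline M})[p]$ by invariance of \'etale cohomology, write $\beta=(\A_N)_{\overline M}$ via the colimit, and then for any $v'\mid v$ the class $\A_N|_{N_{v'}}$ has image $\widetilde\A\notin\Br(Z_{\overline{M_v}})^{\gb}$ --- which, by the very definition of $\Br^{\gb}$ as a union over \emph{all} representatives over \emph{all} finite extensions, forces $\ev^{N_{v'}}_{\A_N}$ to be non-constant with no further work. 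Your approach does reach the same conclusion and spells out side facts the paper leaves implicit (such as $H^3(\Gal(\overline M/N'),\Gm)=0$ over a number field), but the colimit-plus-$\gb$ route is the reason the paper treats this corollary as an immediate consequence of Theorem~\ref{thm : main}.
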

For some important classes of varieties, such as abelian varieties, K3 surfaces, complete intersections in products of projective spaces, assumption \ref{eq : equalityhodgenumber} is satisfied at all places of good reduction. 
\subsubsection{Comparison with previous results}
Since the Brauer classes we construct in Theorem \ref{thm : main} and Corollary \ref{cor : potrel}
have non-constant evaluation over all finite extensions of $K$, they are necessarily of {transcendental} nature, meaning that they do not vanish in $\Br(X_{\bar{K}})$. In general, the construction of transcendental Brauer classes of arithmetic interest is a hard problem. Until very recently, only a handful of explicit examples of transcendental elements obstructing weak approximation had been constructed, see e.g.\ \cite{Hararifirst, Wittenberg, HasVarWAorder2, IeronymouOrd2, PreuOrd3, NewtonCM, Newtoncorrigendum, BergVarOrd3, SkoroIeronymou, ErrataIeronymouSkoro, NewtonMK3}.  

It was only in \cite[Theorem~C]{BNwild} that a first general existence result was shown. There, Bright and the second-named author show that, upon replacing $K$ with a finite field extension, transcendental classes with non-constant evaluation map exist as soon as the reduction 
of the variety has a non-zero global 2-form and is \textit{ordinary} in the sense of Bloch--Kato~\cite{BKpadic}. 
In~\cite{margherita2}, the third-named author gives examples showing how one can construct an element in the Brauer group obstructing weak approximation starting from a non-zero global $2$-form on the special fibre. These include some examples with non-ordinary reduction, indicating that the ordinary condition in~\cite[Theorem~C]{BNwild} is not necessary.

 By way of contrast with the ordinary reduction hypothesis in~\cite[Theorem~C]{BNwild}, recall that if $Z$ is an elliptic curve over $\Q$, then, by \cite{Elkies}, there are infinitely many places of supersingular reduction (of positive density if $Z$ has complex multiplication), while for abelian varieties of dimension $\geq 4$, the existence of primes of good ordinary reduction is still an open question. In the case of products of elliptic curves and abelian varieties, our results  show that there are often more interesting classes in the non-ordinary reduction cases than in the ordinary one. See Section \ref{sec : introexamples} for more details.
\begin{remark}
    In the context of Theorem \ref{thm : main}, if $\H^0(Y,\Omega^2)=0$ then there are no classes whose evaluation map remains non-constant over all finite extensions of $K$, as follows by combining Corollary \ref{cor : Brauervanishing} with \cite[Theorem~8.1 and (8.0.1)]{BKpadic}. Hence,  under assumption \ref{eq : equalityhodgenumber}, the hypothesis  on the existence of global differential $2$-forms is necessary. On the other hand, we do not know whether the assumption \ref{eq : equalityhodgenumber} is really necessary. The first interesting test case is a non-classical Enriques surface over a finite extension of $\Q_2$ with good reduction, for which, to the best of our knowledge, not many results exist in the literature. In this situation, torsion in \'etale cohomology has to be treated with different techniques. We hope to return to this problem in the near future.
\end{remark}
 \subsection{Strategy}
\subsubsection{Cohomological interpretation}
The starting point for the proof of Theorem \ref{thm : main} is a cohomological interpretation of Brauer classes with non-constant evaluation map,  which can be deduced by pushing the techniques in \cite{BNwild}. Consider the following subgroup of $\Br(X_{\overline K})[p]$ of ``geometrically boring'' classes
\begin{align*}
\Br(X_{\overline K})[p]^{\gb}
:=\bigcup_{L/K \textrm{ finite }}\{\A_{\bar{K}}\mid \A\in\Br(X_L)[p] \textrm{ and } \forall F/L \textrm{ finite }  
\ev_{\A}^{F} \textrm{ is constant}\}.
\end{align*}
The conclusion of Theorem \ref{thm : main} can be shown to be equivalent to the statement that 
$\Br(X_{\overline K})[p]^{\gb} \neq \Br(X_{\overline K})[p]$, see Proposition~\ref{prop : gb alt}. Writing $\overline K(X)^{\sh}$ for the strict henselisation of the function field of $X_{\overline K}$ with respect to the $p$-adic valuation, one proves the following: 
\begin{theorem}\label{thm : Brauerandhenselisation}
Assume that $X$ is smooth and proper and has good reduction. Then there is a natural isomorphism of $\Gal(\bar{K}/K)$-modules
$$\Br(X_{\overline K})[p]/\Br(X_{\overline K})[p]^{\gb}=\Image(\H^2(X_{\overline K},\mu_p)\rightarrow \H^2(\overline K(X)^{\sh},\mu_p)).$$	
\end{theorem}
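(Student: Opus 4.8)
The plan is to turn the asserted equality into a statement about a single restriction map. Since $\overline K$ is algebraically closed, fix an isomorphism $\mathbb Z/p\cong \mu_p$. The Kummer sequence on $X_{\overline K}$ gives a surjection $\H^2(X_{\overline K},\mathbb Z/p)\twoheadrightarrow \Br(X_{\overline K})[p]$ with kernel $\Pic(X_{\overline K})/p$, and, $X_{\overline K}$ being regular, Grothendieck's theorem makes $\Br(X_{\overline K})\hookrightarrow \Br(\overline K(X))$ injective, while $\H^2(\overline K(X),\mathbb Z/p)=\Br(\overline K(X))[p]$ and $\H^2(\overline K(X)^{\sh},\mathbb Z/p)=\Br(\overline K(X)^{\sh})[p]$ because Picard groups of fields vanish. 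As $\Pic(X_{\overline K})/p$ dies in $\Br(\overline K(X))$, factoring $\H^2(X_{\overline K},\mathbb Z/p)\to \H^2(\overline K(X)^{\sh},\mathbb Z/p)$ through $\Br(\overline K(X))[p]\to \Br(\overline K(X)^{\sh})[p]$ shows that its image is $\Br(X_{\overline K})[p]$ modulo the kernel of restriction to $\overline K(X)^{\sh}$. So the theorem is equivalent to the assertion that $\Br(X_{\overline K})^{\gb}[p]$ equals $\Ker\big(\Br(X_{\overline K})[p]\to \Br(\overline K(X)^{\sh})[p]\big)$.

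The comparison making this equality true is the following dictionary, obtained by pushing the evaluation-map computations of \cite{BNwild}: for $\A\in \Br(X_L)[p]$ with good reduction and $F/L$ finite, the evaluation map $\ev_\A^F$ is identically zero precisely when, after a further finite extension, $\A$ extends to a Brauer class on the smooth proper model over the ring of integers. One direction is immediate from the valuative criterion of properness together with $\Br(\Os_{F'})=0$; the substantive direction is that triviality of $\ev_\A^F$ forces the refined residue of $\A$ along the special fibre to vanish, which uses that this residue is detected by $\ev_\A$ on residue disks and that $Y(\overline k)$ is Zariski dense. Granting this, for the inclusion $\subseteq$: if $\ev_\A^F$ is constant with value the image of some $\beta\in\Br(F)$, then $\A-\mathrm{pr}^*\beta$ has vanishing evaluation, hence extends to the model over $\Os_{F'}$ for some finite $F'/F$; restricting this extension through the strictly henselian local ring at the generic point $\xi$ of the special fibre (whose Brauer group vanishes) and then to its fraction field, and noting that $\beta$ dies over $\overline K$ since $\Br(\overline K)=0$, yields $\A_{\overline K}|_{\overline K(X)^{\sh}}=0$. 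For $\supseteq$: vanishing of $\A_{\overline K}$ on $\overline K(X)^{\sh}$ spreads out to vanishing over $L'(X)^{\sh}$ for some finite $L'/K$ over which $\A_{\overline K}$ and a model are defined; as the Brauer group of a strict henselisation is trivial, the refined residue of that model along the special fibre vanishes, and since the class is unramified along all horizontal divisors by Grothendieck's theorem, Brauer purity for the regular model (after enlarging $L'$ if needed to absorb the residue-field adjustment, still via the \cite{BNwild} dictionary) shows that it extends to the model over $\Os_{L'}$, whence its evaluation is identically zero over every finite extension of $L'$ and $\A_{\overline K}\in\Br(X_{\overline K})^{\gb}[p]$.

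The pieces above — Kummer theory, Grothendieck's injectivity, Brauer purity at finite level, vanishing of Brauer groups of strictly henselian local rings, and the compatibility of the colimits along the tower of finite extensions of $K$ — are all routine. The genuine obstacle is the \cite{BNwild} dictionary: one must analyse the $p$-adic evaluation map finely enough, through the ramification (Swan-conductor) filtration on $\Br(X_L)[p]$ near the special fibre in the wild, equal-residue-characteristic regime, to identify constancy of $\ev_\A$ with triviality of the refined residue at $\xi$, equivalently with vanishing on $\overline K(X)^{\sh}$, and to make this identification uniform as one climbs towards $\overline K$. That wild evaluation-map analysis is the heart of the matter; the rest is assembly.
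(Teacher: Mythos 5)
Your reduction via Kummer theory and Grothendieck injectivity to showing $\Br(X_{\overline K})^{\gb}[p]=\Ker\bigl(\Br(X_{\overline K})[p]\to \Br(\overline K(X)^{\sh})[p]\bigr)$ is exactly the paper's starting point, and your use of $\Br(\mathcal O^{\sh}_{\mathcal X,\xi})=0$ and $\Br(\Os_{F'})=0$ for the easy implications is fine. The genuine gap is in what you call the ``dictionary,'' and more specifically in the sentence ``as the Brauer group of a strict henselisation is trivial, the refined residue of that model along the special fibre vanishes.'' This conflates two different conditions. The kernel of $\Br(X_L)[p]\to\Br(L(X)^{\sh})[p]$ is, by Kato's theorem, exactly $\fil_0(X_L)$, i.e.\ the image of $\lambda_\pi\colon\H^2(k(Y),\Z/p(1))\oplus\H^1(k(Y),\Z/p)\to\H^2(L(X)^h,\Z/p(1))$; the residue map $\partial$ is defined \emph{on} $\fil_0$ by taking the $\H^1$-component of $\lambda_\pi^{-1}$, and it is genuinely nonzero on a large part of $\fil_0$. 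In other words $\fil_0\supsetneq\Ker(\partial)=\Br(\mathcal X)[p]$, so vanishing in the strict henselisation does \emph{not} give you $\partial(\A)=0$, and Brauer purity alone does not produce an extension to the model. Bridging this gap is precisely what the paper's Proposition~\ref{prop:geomfil0} and Lemma~\ref{lem:F descent} do: starting from $\A_{K'}\in\fil_0(X_{K'})$ one writes the image in $\Br(K(X)^h)$ as $\B+\C$ with $\partial(\B)=0$ and $\C=\{\iota^1(\psi),\pi'\}$ killed by adjoining a $p$-th root of $\pi'$, after first descending the $\H^2$-component from $k(Y)'$ to $k(Y)$ via Hilbert~90 and the Hochschild--Serre spectral sequence. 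You would need to supply this (or something equivalent) before your ``spreads out'' argument can close.

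Your other direction also outsources the entire content to an unproven assertion. You claim that constancy of $\ev_\A^F$ over a fixed finite $F$ forces the refined residue to vanish, citing ``Zariski density of $Y(\overline k)$''; but over a single finite $F$ one only has access to the finite set $Y(k_F)$, so density of $\overline k$-points is irrelevant. The criterion in \cite[Theorem~B]{BNwild} (quoted as Theorem~\ref{thm : summaryBN}(2) in the paper) goes the other way: a nonzero pointwise value of $\rsw_{n,\pi}(\A)$ at some $P_0\in Y(k)$ forces surjectivity of $\ev_\A$. Turning this into a proof that $\A\notin\fil_0(X_F)$ for all $F$ implies eventual surjectivity requires controlling how $\rsw_{n,\pi_L}(\A_L)$ transforms as one passes to finite extensions $L/K$ and changes uniformiser; this is the content of the paper's Lemma~\ref{lem : rswgaloisaction} (Galois-twisting of refined Swan conductors), which feeds into Proposition~\ref{prop : braueruseful} to produce a single unramified $K'/K$ over which the needed pointwise non-vanishing is guaranteed. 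Your proposal names the existence of such an analysis but neither supplies it nor correctly identifies where the difficulty sits (it is not ``constancy of $\ev_\A$ iff triviality of $\partial$ iff vanishing on $\overline K(X)^{\sh}$'' --- these three are genuinely inequivalent at finite level, and the theorem is about making them agree only after passing to suitable finite extensions).
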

\subsubsection{Prismatic interpretation}
With Theorem \ref{thm : Brauerandhenselisation} in hand, we can apply the machinery of prismatic cohomology. Let $\mathcal O_{\mathbb C_p^{\flat}}$ be the tilt of the ring of integers $\Ocp$ of $\mathbb C_p:=\widehat{\overline \Q}_p$, which is  (non-canonically) isomorphic to the completion of the algebraic closure of the power series ring in one variable over $k$. Our arguments will concern $\H^*(\mathcal X_{\Ocp},\Delta/p)$, the reduction modulo $p$ of the prismatic cohomology theory defined in \cite{BS}. Under assumption \ref{eq : equalityhodgenumber}, it is a finite free $\Ocpflat$-module (Proposition \ref{prop : basicpropertiesprismatic}), endowed with a Frobenius $\varphi: \H^*(\mathcal X_{\Ocp},\Delta/p)\rightarrow \H^*(\mathcal X_{\Ocp},\Delta/p)$.  Let $d\in \mathcal O_{\mathbb C_p^{\flat}}$ be the element defined in \eqref{eq : defofd} and write 
$$\H^n(\mathcal X_{\Ocp},\Delta/p)^{\varphi=d^i}:=\Ker(\H^n(\mathcal X_{\Ocp},\Delta/p)\xrightarrow{\varphi-d^i} \H^n(\mathcal X_{\Ocp},\Delta/p)),$$
for $i,n\in \Z_{\geq 0}$.

Building on the work in \cite{BMS1,BMS2,BS} and, in particular, using the syntomic interpretation of $p$-adic vanishing cycles given in \cite[Section 10]{BMS2}, we prove the following:
\begin{theorem}\label{thm : prismaticinterpretationkernel}
Let $\mathcal X$ be a smooth proper model of $X$ with special fibre $Y$. Assume that \ref{eq : equalityhodgenumber} holds. Then one has 
    $$\Ker(\H^n(X_{\overline K},\mathbb Z/p)\rightarrow \H^n(\overline K(X)^{\sh},\mathbb Z/p))\simeq \H^n(\mathcal X_{\Ocp},\Delta/p)^{\varphi=d^{n-1}} .$$
\end{theorem}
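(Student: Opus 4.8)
The plan is to transport the left-hand side to the special fibre by means of $p$-adic nearby cycles, rewrite it via the syntomic interpretation of $p$-adic vanishing cycles of \cite[Section~10]{BMS2} in terms of Nygaard-filtered prismatic cohomology, and then use \ref{eq : equalityhodgenumber} --- through the finiteness and degeneration of Proposition~\ref{prop : basicpropertiesprismatic} and a mod $p$ Newton-above-Hodge estimate --- to recognise the answer as $\Ker(\varphi - d^{n-1})$. First I would observe that étale cohomology with $\mathbb{Z}/p$-coefficients is invariant under extension of algebraically closed fields, so $\H^n(X_{\overline K},\mathbb{Z}/p) = \H^n(X_{\mathbb{C}_p},\mathbb{Z}/p)$ and likewise for $\overline K(X)^{\hens}$, and over $\mathbb{C}_p$ one may identify $\mathbb{Z}/p \simeq \mathbb{Z}/p(n)$ without changing either group or the restriction map. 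Writing $\mathrm{R}\Psi$ for the $p$-adic nearby-cycles complex of $\mathcal{X}_{\Ocp}$ on the special fibre $Y$, properness of $\mathcal{X}$ gives $\H^n(X_{\mathbb{C}_p},\mathbb{Z}/p(n)) \simeq \H^n(Y, \mathrm{R}\Psi\,\mathbb{Z}/p(n))$, while $\overline K(X)^{\hens}$ is the fraction field of the henselisation of the local ring of $\mathcal{X}_{\Ocp}$ at the generic point $\eta_Y$ of $Y$, so $\H^n(\overline K(X)^{\hens},\mathbb{Z}/p(n))$ is computed by $\mathrm{R}\Psi$ localised at $\eta_Y$ and the map of the statement is the $n$-th cohomology of this localisation map. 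Since the cone of $\tau^{\le n}\mathrm{R}\Psi \to \mathrm{R}\Psi$ sits in degrees $> n$, one may replace $\mathrm{R}\Psi\,\mathbb{Z}/p(n)$ by $\tau^{\le n}\mathrm{R}\Psi\,\mathbb{Z}/p(n)$ throughout.

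Next I would feed in \cite[Section~10]{BMS2}: as a statement of sheaves on a site of $Y$, $\tau^{\le n}\mathrm{R}\Psi\,\mathbb{Z}/p(n)$ is the mod $p$ syntomic complex of $\mathcal{X}_{\Ocp}$, namely the fibre of a divided-Frobenius map $\varphi_n - \mathrm{can}\colon \mathcal{N}^{\ge n}\widehat{\Delta}_{\mathcal{X}_{\Ocp}}/p \to \widehat{\Delta}_{\mathcal{X}_{\Ocp}}/p$, where $\widehat{\Delta}$ is the Nygaard-completed (relative, over $\Ainf$) prismatic cohomology, $\mathcal{N}^{\ge n}$ its $n$-th Nygaard step, $\mathrm{can}$ the inclusion, and $\varphi_n$ the divided Frobenius (with the Breuil--Kisin twist, trivialisable over $\Ainf$, absorbed); being a sheaf isomorphism, it localises compatibly to $\eta_Y$. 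Taking $R\Gamma(Y,-)$, and invoking \ref{eq : equalityhodgenumber} so that by Proposition~\ref{prop : basicpropertiesprismatic} the groups $\H^*(\mathcal{X}_{\Ocp},\Delta/p)$ and their Nygaard pieces are finite free over $\Ocpflat$ of the expected ranks, with $\H^*(\mathcal{X}_{\Ocp},\mathcal{N}^{\ge n}\Delta/p) \hookrightarrow \H^*(\mathcal{X}_{\Ocp},\Delta/p)$ and the relevant completions inert, one obtains a long exact sequence presenting $\H^n(X_{\mathbb{C}_p},\mathbb{Z}/p(n))$ as an extension of $\Ker\bigl(\varphi_n - \mathrm{can}\colon \H^n(\mathcal{X}_{\Ocp},\mathcal{N}^{\ge n}\Delta/p) \to \H^n(\mathcal{X}_{\Ocp},\Delta/p)\bigr)$ by the cokernel of $\varphi_n - \mathrm{can}$ in degree $n-1$, with a parallel sequence over $\eta_Y$.

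Localisation at $\eta_Y$ is a morphism between these two long exact sequences, and a diagram chase --- whose substantive input is a comparison of the prismatic cohomologies of $\mathcal{X}_{\Ocp}$ and of $\mathcal{O}^{\hens}_{\mathcal{X}_{\Ocp},\eta_Y}$ in degrees $n-1$ and $n$, available thanks to the freeness coming from \ref{eq : equalityhodgenumber} --- identifies the kernel of the statement with $\Ker\bigl(\varphi_n - \mathrm{can}\colon \H^n(\mathcal{X}_{\Ocp},\mathcal{N}^{\ge n}\Delta/p) \to \H^n(\mathcal{X}_{\Ocp},\Delta/p)\bigr)$ sitting inside $\H^n(\mathcal{X}_{\Ocp},\Delta/p)$: the degree-$n$ term survives as the unramified, ``effective'' part --- detected by honest prismatic cohomology of the \emph{proper} scheme $\mathcal{X}_{\Ocp}$ --- whereas the degree-$(n-1)$ cokernel term maps injectively under localisation. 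Finally I would match this with $\Ker(\varphi - d^{n-1})$: using the explicit form of $d$ in \eqref{eq : defofd}, trivialising the Breuil--Kisin twist and comparing the divided Frobenius to $\varphi$ across the Nygaard shift turns $\varphi_n - \mathrm{can}$ into $d^{-(n-1)}(\varphi - d^{n-1})$ on $\H^n(\mathcal{X}_{\Ocp},\mathcal{N}^{\ge n}\Delta/p)$, while the mod $p$ Newton-above-Hodge estimate --- precisely where \ref{eq : equalityhodgenumber} is indispensable --- forces every $x \in \H^n(\mathcal{X}_{\Ocp},\Delta/p)$ with $\varphi(x) = d^{n-1}x$ to lie already in $\mathcal{N}^{\ge n}$. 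Chaining the identifications of the three previous steps then yields the claimed natural isomorphism.

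The main obstacle, in my view, is the last two manoeuvres: first the delicate bookkeeping in the diagram chase that separates, on the syntomic side, the part annihilated by restriction to $\eta_Y$ from the part on which restriction is injective --- equivalently, controlling the prismatic cohomology of the non-Noetherian local ring $\mathcal{O}^{\hens}_{\mathcal{X}_{\Ocp},\eta_Y}$ in the two relevant degrees --- and second the normalisation identity turning $\varphi_n - \mathrm{can}$ into $\varphi - d^{n-1}$ together with the mod $p$ Newton-above-Hodge statement under \ref{eq : equalityhodgenumber} that pins the locus $\{\varphi = d^{n-1}\}$ down to the correct Nygaard step. These are exactly the places where the hypothesis $\heartful$ does real work, by ensuring that mod $p$ prismatic cohomology is finite free with a degenerate Nygaard filtration.
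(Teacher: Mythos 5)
Your high-level plan---pass to $\mathbb{C}_p$, express the left-hand side via $p$-adic vanishing cycles, feed in the syntomic description from \cite[Section~10]{BMS2} and use \ref{eq : equalityhodgenumber} through freeness of mod-$p$ prismatic cohomology---is the paper's strategy. But the execution as written has several concrete problems.

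\textbf{Off-by-one in the truncation and Nygaard step.} The kernel of $\H^n(X_{\mathbb{C}_p},\Z/p)\to\H^n(\overline{K}(X)^{\sh},\Z/p)$ is identified (after Corollary~\ref{cor : Brauervanishing}) with the kernel of the Leray edge map $\H^n(X_{\mathbb{C}_p},\Z/p)\to\H^0(\mathcal X_{\Ocp},R^nj_*\Z/p)$, and the exact triangle $\tau_{\leq n-1}Rj_*\Z/p\to Rj_*\Z/p\to\tau_{>n-1}Rj_*\Z/p$ shows this kernel is exactly $\H^n(\mathcal X_{\Ocp},\tau_{\leq n-1}Rj_*\Z/p)$: the truncation is at level $n-1$, not $n$. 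The matching BMS2 fibre sequence is therefore the one for $N^{\geq n-1}$ and $\phi_{n-1}$, which is why the final answer involves $d^{n-1}$ (Lemma~\ref{lem : vanishingasd-fixedpoints} applied with $i=n-1$). Working with $\tau^{\leq n}$ and $\mathcal N^{\geq n}$ as you do is internally consistent with BMS2 but answers a different question; pushed through, it produces $\Ker(\varphi-d^{n})$, not $\Ker(\varphi-d^{n-1})$. Your attempted fix, that every $x$ with $\varphi(x)=d^{n-1}x$ already lies in $\mathcal N^{\geq n}$, is in fact false: by Proposition~\ref{prop : nygaard}, $\H^n(N^{\geq i}/p)=\phi^{-1}(d^{i}\H^n(\Delta/p))$, so such an $x$ lies in $N^{\geq n-1}$, and lies in $N^{\geq n}$ only if $x\in d\H^n(\Delta/p)$.

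\textbf{Different (and underdeveloped) route to the kernel, and misattributed technical input.} You propose to identify the kernel by localising a long exact sequence at $\eta_Y$ and comparing the prismatic cohomology of $\mathcal X_{\Ocp}$ with that of $\mathcal{O}^{\mathrm{h}}_{\mathcal X_{\Ocp},\eta_Y}$; the paper avoids this entirely. It instead uses the injectivity of the map $g:\H^0(\mathcal X_{\mathcal O_{\overline K}},R^nj_*\Z/p)\hookrightarrow\H^0(\Gamma,\H^n(\overline K(X)^{\sh},\Z/p))$ from \cite[Lemma~3.4]{BNwild}, reducing the left-hand side to the edge-map kernel, which is a single cohomology group $\H^n(\tau_{\leq n-1}Rj_*\Z/p)$ rather than a kernel of a localisation. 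Your route would require controlling prismatic cohomology of the non-Noetherian local ring at $\eta_Y$ with the same finiteness/freeness as over $\mathcal X$; hypothesis~\ref{eq : equalityhodgenumber} gives this over the proper scheme, not over that local ring, so the ``diagram chase whose substantive input is a comparison of the prismatic cohomologies'' is not actually available from what you cite. Finally, you invoke the Newton-above-Hodge estimate (Theorem~\ref{thm : HodgevsNewton}) to pin $\{\varphi=d^{n-1}\}$ into a Nygaard step; that theorem plays no role in the proof of Theorem~\ref{thm : prismaticinterpretationkernel}. The input that does the work here is Proposition~\ref{prop : nygaard}, giving the exact identification $\H^n(N^{\geq i}/p)=\phi^{-1}(d^{i}\H^n(\Delta/p))$, together with Lemma~\ref{lem : injectivityprismatic} and the freeness from Proposition~\ref{prop : basicpropertiesprismatic}; Newton-above-Hodge is reserved for Theorem~\ref{thm : jumpofdimension}.
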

Thanks to the \'etale comparison theorem for prismatic cohomology and to Theorems \ref{thm : Brauerandhenselisation} and \ref{thm : prismaticinterpretationkernel}, Theorem \ref{thm : main} is implied by the following:
\begin{theorem}\label{thm : jumpofdimension}
	Let $\mathcal X$ be a smooth proper model of $X$ with special fibre $Y$. Assume that \ref{eq : equalityhodgenumber} holds. If $\H^i(X,\Omega^{2n-i})\neq 0$ for some $i\neq n$ , then 
	$$\Dim_{\mathbb F_p}(\H^{2n}(\mathcal X_{\Ocp},\Delta/p)^{\varphi=d^n})<\rank_{\mathcal O_{\mathbb C^{\flat}_p}}(\H^{2n}(\mathcal X_{\Ocp},\Delta/p)).$$
	\end{theorem}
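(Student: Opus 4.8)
The plan is to transfer the statement to the $\varphi$-module $M:=\H^{2n}(\mathcal X_{\Ocp},\Delta/p)$ over $\Ocpflat$. By Proposition~\ref{prop : basicpropertiesprismatic} and \ref{eq : equalityhodgenumber}, $M$ is a free $\Ocpflat$-module of rank $r:=\sum_{j\ge 0}h_j$, where $h_j:=\dim_k\H^{2n-j}(Y,\Omega^j_{Y/k})=\dim_K\H^{2n-j}(X,\Omega^j_{X/K})$ (the second equality being \ref{eq : equalityhodgenumber}), carrying its semilinear Frobenius $\varphi$. Note that the hypothesis ``$\H^i(X,\Omega^{2n-i})\neq 0$ for some $i\neq n$'' is, on setting $j=2n-i$, exactly the statement that $h_j\neq 0$ for some $j\neq n$.

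The first step is to record the structure of $(M,\varphi)$ coming from the theory of $\Ainf$-cohomology \cite{BMS1,BMS2,BS} under \ref{eq : equalityhodgenumber}: the map $\varphi$ is injective, becomes an isomorphism after inverting $d$, and $M/\varphi(M)$ is a finitely presented torsion module killed by a power of $d$; using that $\Ocpflat$ is a valuation ring and the Breuil--Kisin--Fargues module structure of $\H^{2n}_{\Ainf}(\mathcal X)$, there is an $\Ocpflat$-basis $e_1,\dots,e_r$ of $M$ with $\varphi(M)=\bigoplus_i d^{a_i}\Ocpflat e_i$ for integers $a_i\ge 0$; and finally, the mod~$p$ prismatic form of the Mazur--Ogus comparison identifies these elementary divisors with the Hodge numbers, $\#\{i:a_i=j\}=h_j$ for all $j$. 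I expect proving this last identity --- that under \ref{eq : equalityhodgenumber} the Frobenius on the mod~$p$ reduction of prismatic cohomology detects the Hodge numbers on the nose --- to be the main obstacle; it is the Newton-above-Hodge input announced in the introduction, and rests on the Hodge--Tate and Nygaard structure of $\Delta/p$ together with the relevant degenerations, all controlled by \ref{eq : equalityhodgenumber}.

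Granting this, the remainder is linear algebra over the valuation ring $\Ocpflat$; set $C^{\flat}:=\Frac(\Ocpflat)=\Ocpflat[1/d]$ and write $\val$ for its valuation. I would first check the elementary fact that an $\F_p$-linearly independent family in $\Ker(\varphi-d^n:M\to M)$ is linearly independent over $C^{\flat}$ inside $M\otimes_{\Ocpflat}C^{\flat}$: from a minimal dependence relation $\sum_i c_i v_i=0$ with $c_1=1$, applying $\varphi$ and using $\varphi(v_i)=d^n v_i$ and that $d^n$ is a unit of $C^{\flat}$ gives $\sum_i c_i^{\,p}v_i=0$; subtracting removes the first term, so minimality forces $c_i^{\,p}=c_i$, i.e.\ $c_i\in\F_p$, for every $i$ --- a contradiction. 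In particular $\dim_{\F_p}\Ker(\varphi-d^n:M\to M)\le r$.

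Finally I would show the inequality is strict. If it were an equality, choose an $\F_p$-basis $v_1,\dots,v_r$ of the kernel; then $N:=\bigoplus_i\Ocpflat v_i\subseteq M$ is free of rank $r$, and since $\Ocpflat$ is perfect one has $\varphi(N)=\bigoplus_i d^n\Ocpflat v_i=d^n N$. Now compute the $\Ocpflat$-colength of $\varphi(N)$ in $M$ in two ways: via $\varphi(N)\subseteq N\subseteq M$ it equals $\operatorname{colength}(N\subseteq M)+nr\,\val(d)$; via $\varphi(N)\subseteq\varphi(M)\subseteq M$, using $\operatorname{colength}(\varphi(M)\subseteq M)=(\sum_j j h_j)\val(d)=nr\,\val(d)$ --- the last equality by Hodge symmetry on $X$, valid since $K$ has characteristic $0$ --- and the fact that the semilinear injection $\varphi$ multiplies $\Ocpflat$-colengths by $p$, it equals $nr\,\val(d)+p\cdot\operatorname{colength}(N\subseteq M)$. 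Comparing gives $(p-1)\operatorname{colength}(N\subseteq M)=0$, hence $N=M$. Then $\varphi(M)=d^n M$, so every $a_i$ equals $n$, so by the Mazur--Ogus identity $h_j=0$ for all $j\neq n$ --- contradicting the hypothesis. Therefore $\dim_{\F_p}\Ker(\varphi-d^n:M\to M)<r=\rank_{\Ocpflat}M$.
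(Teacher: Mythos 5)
Your proof is correct and, after the shared key input, takes a genuinely different route through the semi-linear algebra. Both you and the paper rely on the identification of the elementary divisors of $\varphi$ on $\H^{2n}(\mathcal X_{\Ocp},\Delta/p)$ with the Hodge numbers (Proposition~\ref{prop : hodgevsgeometrichodge}), which you correctly flag as the hard part and black-box; note this is stronger than Theorem~\ref{thm : HodgevsNewton}, since you need the full elementary-divisor multiset rather than just the total slope, but the paper also ends up appealing to the full Proposition~\ref{prop : hodgevsgeometrichodge} in its final step. Given that input, the paper runs the total-slope machinery of Section~\ref{sec : totalslope}: it uses Proposition~\ref{prop : existence filtration} to produce a slope filtration and then Lemma~\ref{lem : d-fixedpoints}, an inductive semi-linear algebra lemma, to upgrade the assumed equality of dimensions to an isomorphism $M\simeq\mathcal O(-n)^r$. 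You replace that induction with a direct colength computation: taking $N$ to be the span of an $\F_p$-basis of the fixed space and comparing the colength of $\varphi(N)$ in $M$ along $\varphi(N)\subseteq N\subseteq M$ and along $\varphi(N)\subseteq\varphi(M)\subseteq M$, using that $\varphi$ scales colengths by $p$ and that $\operatorname{colength}(\varphi(M)\subseteq M)=nr$ by Hodge symmetry. This forces $N=M$ in one stroke, bypassing the slope filtration entirely. It is a cleaner piece of linear algebra; the ingredients you use that are left implicit --- that $\F_p$-linear independence in the kernel lifts to independence over $\operatorname{Frac}(\Ocpflat)$ (the paper cites Milne for this inside Lemma~\ref{lem : d-fixedpoints}), and that colength is additive for finitely presented torsion modules over the valuation ring $\Ocpflat$ --- are standard and hold here because $\Ocpflat$ is a valuation domain with all finitely generated ideals principal. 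So the proposal is sound, and trades the paper's filtration-plus-induction lemma for a shorter length count.
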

       \begin{remark}\label{rmk : everythingisdifficult}
     Observe that Theorems \ref{thm : prismaticinterpretationkernel} and \ref{thm : jumpofdimension} give information about different maps: for an even degree $n=2k$,
     Theorem \ref{thm : prismaticinterpretationkernel} concerns the map $\varphi-d^{n-1}: \H^{n}(\mathcal X_{\Ocp},\Delta/p)\rightarrow \H^n(\mathcal X_{\Ocp},\Delta/p))$, while Theorem \ref{thm : jumpofdimension} applied to $\H^{n}(\mathcal X_{\Ocp},\Delta/p)$ concerns the map $\varphi-d^{k}: \H^{n}(\mathcal X_{\Ocp},\Delta/p)\rightarrow \H^{n}(\mathcal X_{\Ocp},\Delta/p))$. The only cohomology group for which they can be used together is $\H^2(\mathcal X_{\Ocp},\Delta/p)$, because in this case $n=2$ so that $n-1=1=k$. In contrast, in general, the equality 
	$$\Dim_{\mathbb F_p}(\H^{n}(\mathcal X_{\Ocp},\Delta/p)^{\varphi=d^{n-1}})=\rank_{\mathcal O_{\mathbb C^{\flat}_p}}(\H^{n}(\mathcal X_{\Ocp},\Delta/p))$$
    can hold for some $n>2$, even when the non-diagonal Hodge numbers of $X$ are non-zero.
    This can be seen already for $n=4$ and $X$ a triple product of an  elliptic curve, see Example \ref{ex : 3elliptic curves}. 
    \end{remark}
	\subsection{Strategy for the proof of Theorem \ref{thm : jumpofdimension}}
    To simplify the discussion, we assume in this subsection that $n=1$.
One can show that if $\H^0(X,\Omega^{2})\neq 0$ then  $\varphi:\H^{2}(\mathcal X_{\Ocp},\Delta/p)\rightarrow \H^{2}(\mathcal X_{\Ocp},\Delta/p)$ is not zero when tensored with $\overline{k}$. However, this information is not enough to control the dimension of $\H^{2}(\mathcal X_{\Ocp},\Delta/p)^{\varphi=d}$, as Example \ref{ex : slopedfixedpoint} shows. Hence, a more refined control on the action of the Frobenius is needed; we achieve this via a Newton-above-Hodge type of argument. 

Choose a compatible system $\{d^{a}\}_{a\in \Q_{\geq 0}}$ of roots of $d$ in $\Ocpflat$. For $a\in \mathbb Q_{\geq 0}$, let $\mathcal O(-a)$ be the free $\mathcal O_{\mathbb C^{\flat}_p}$-module of rank $1$, with generator $e$, on which there is a semi-linear Frobenius acting as $\varphi(e)=d^ae$. First, we show in Proposition \ref{prop : existence filtration} that $\H^2(\mathcal X_{\Ocp},\Delta/p)$ admits a Frobenius-equivariant decreasing filtration $F^j$ such that $F^{j}/F^{j+1}\simeq \mathcal O(-a_j)$ for some $a_j\in \Q_{\geq 0}$. 
While these $a_j$ might depend on the choice of the filtration (see Example \ref{ex : slope}), we show in Proposition \ref{prop : existence filtration}(2) that their sum does not. Hence, we define the total slope $\TS (\H^2(\mathcal X_{\Ocp},\Delta/p))$ of $\H^2(\mathcal X_{\Ocp},\Delta/p)$ to be this sum. Some (semi-)linear algebra over $\Ocpflat$ (Lemma \ref{lem : d-fixedpoints}) reduces the proof of Theorem \ref{thm : jumpofdimension} to showing the equality 
$$\TS (\H^{2}(\mathcal X_{\Ocp},\Delta/p))=\rank(\H^{2}(\mathcal X_{\Ocp},\Delta/p)).$$
The following theorem, whose proof is inspired by~\cite{NewtonaboveHodge}, proves a more general result, which should be thought of as a ``Newton-above-Hodge" statement and may be of independent interest. 
\begin{theorem}\label{thm : HodgevsNewton}
Let $\mathcal X$ be a smooth proper model of $X$ with special fibre $Y$ and let $h^{i,j}=\Dim_{\mathbb C_p}(\H^j(X,\Omega^i_{X/\mathbb C_p}))$. Assume that \ref{eq : equalityhodgenumber} holds. 
Then $$\TS (\H^n(\mathcal X_{\Ocp},\Delta/p))=\sum^n_{i=0}i\cdot h^{i,n-i}.$$
	\end{theorem}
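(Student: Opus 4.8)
The plan is to compute the total slope by reducing to the Hodge--Tate and de Rham specialisations of prismatic cohomology and then running a Newton-above-Hodge argument in the style of \cite{NewtonaboveHodge}. First I would fix the filtration $F^j$ on $\H^n(\mathcal X_{\Ocp},\Delta/p)$ provided by Proposition \ref{prop : existence filtration}, with graded pieces $\mathcal O(-a_j)$, and reinterpret $\TS$ as $\sum_j a_j = \mathrm{ord}_d(\det\varphi)$ (measured against the chosen compatible system of $d$-power roots), since on each rank-one piece $\varphi$ scales the generator by $d^{a_j}$. Thus the statement to prove becomes the clean identity $\mathrm{ord}_d(\det(\varphi\mid \H^n(\mathcal X_{\Ocp},\Delta/p))) = \sum_{i=0}^n i\cdot h^{i,n-i}$; the right-hand side is exactly the ``Hodge number'' $\sum_i i\,h^{i,n-i}$, which under \ref{eq : equalityhodgenumber} is insensitive to whether we use the Hodge numbers of $X$ or of $Y$.

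The key input is the structure of $\H^n(\mathcal X_{\Ocp},\Delta/p)$ as a Breuil--Kisin--Fargues-type module modulo $p$: by \cite{BMS1,BMS2,BS} it carries the Nygaard/Hodge-Tate filtration whose graded pieces are the Hodge cohomology groups $\H^{n-j}(\mathcal X_{\Ocp},\Omega^{j})$ (suitably twisted), and the Frobenius $\varphi$ becomes an isomorphism after inverting $d$, with the precise divisibility $\varphi(\fil^j)\subseteq d^j\cdot\H^n$ governed by this filtration — this is the mod $p$ shadow of the fact that divided Frobenius $\varphi/d^j$ is defined on the $j$-th Nygaard piece. Comparing the $d$-adic valuation of $\det\varphi$ on an adapted basis, one reads off $\mathrm{ord}_d(\det\varphi) = \sum_j j\cdot\dim_{\overline k}(\gr^j) = \sum_j j\cdot h^{j,n-j}$, using Proposition \ref{prop : basicpropertiesprismatic} to know the ranks of the graded pieces equal the Hodge numbers (this is where \ref{eq : equalityhodgenumber} is essential: without it the mod $p$ prismatic cohomology need not be free of the expected rank and the filtration lengths could drop). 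The independence of $\TS$ from the choice of filtration, already established in Proposition \ref{prop : existence filtration}(2), guarantees that this determinant computation indeed computes $\TS$.

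The main obstacle I anticipate is controlling the Frobenius divisibilities \emph{integrally} modulo $p$ rather than after inverting $d$: a priori one only knows $\varphi\otimes\Ocpflat[1/d]$ is an isomorphism, and one must rule out that some graded piece contributes less than its ``expected'' power of $d$ because of torsion phenomena or because the Nygaard filtration mod $p$ fails to split off cleanly. This is precisely the point where the hypothesis \ref{eq : equalityhodgenumber} does the work — it forces $\H^*(\mathcal X_{\Ocp},\Delta/p)$ and all its Nygaard graded pieces to be finite free of the Hodge-predicted ranks, so no rank can be lost. A secondary technical point is bookkeeping the twists: the element $d$ plays the role of $\xi$ (or $\mu$) mod $p$, and one must ensure the system $\{d^a\}$ is used consistently so that the rank-one pieces $\mathcal O(-a_j)$ have integer $a_j$ summing to the Hodge quantity; I would handle this by working throughout with the standard Breuil--Kisin twist conventions of \cite{BS} and only at the end translating into the $\TS$ language of Proposition \ref{prop : existence filtration}. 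Once these divisibilities are pinned down, the theorem follows by a direct determinant count, with no further geometric input needed.
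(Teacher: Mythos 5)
Your reduction of $\TS$ to $\mathrm{ord}_d(\det\varphi)$ is exactly the paper's Lemma~\ref{lem : hodgevsnewton} together with Remark~\ref{rem : matricesnhodge}, and your identification of the Nygaard filtration and its divided Frobenius as the key structural input is also the route the paper takes. So the overall strategy matches. But the determinant count you treat as a quick ``read off'' hides the entire remaining content of the proof, and as written contains an imprecise claim that would lead you astray if taken literally.

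Two points to flag. First, the graded pieces of the Nygaard filtration in cohomology are \emph{not} the Hodge groups of ranks $h^{j,n-j}$. Each $\H^n(N^{\geq i}/p)$ is free of \emph{full} rank $h^n$ (this is Proposition~\ref{prop : nygaard}), so the filtration $\H^n(N^{\geq 0}/p)\supseteq \H^n(N^{\geq 1}/p)\supseteq\cdots$ never drops rank, and the quotients $\H^n(N^{\geq i}/p)/\H^n(N^{\geq i+1}/p)$ are torsion $\Ocpflat/d$-modules of rank $\sum_{j\leq i}h^{j,n-j}$, coming from $\H^n(\tau_{\leq i}\overline\Delta/p)$ via~\eqref{eq : grnygard} and the conjugate spectral sequence degeneration in Proposition~\ref{prop : basicpropertiesprismatic}(4). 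So the formula $\sum_j j\cdot\dim(\gr^j)=\sum_j j\cdot h^{j,n-j}$ is not what an adapted-basis count against the Nygaard filtration would directly give you; the combinatorics of how these full-rank submodules nest is what needs unravelling. Second, the inclusion $\varphi(\fil^j)\subseteq d^j\H^n$ which you cite gives only $\H^n(N^{\geq j}/p)\subseteq\phi^{-1}(d^j\H^n(\Delta/p))$, i.e.\ a lower bound on divisibility. The sharpness you need — that this inclusion is an equality, so that $\phi^{-1}(d^j\H^n)$ is exactly the cohomology of the Nygaard filtration — is the nontrivial statement of Proposition~\ref{prop : nygaard}, itself proved by induction using~\eqref{eq : grnygard} and the injectivity of $\H^n(\tau_{\leq i}\overline\Delta/p)\to\H^n(\overline\Delta/p)$. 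You correctly sense in your ``main obstacle'' paragraph that controlling the integral divisibility is the crux and that hypothesis~\ref{eq : equalityhodgenumber} is what makes it work; but this is precisely where the paper spends its effort. Proposition~\ref{prop : hodgevsgeometrichodge} runs the resulting induction and actually proves the stronger statement that \emph{all} elementary divisors of $\H^n(\Delta/p)/\phi(\H^n(\Delta^{(1)}/p))$ are $d^j$ with multiplicity $h^{j,n-j}$ (the full Hodge polygon), from which the height identity is then immediate. If you want to complete your argument along these lines, you should isolate and prove the identity $\H^n(N^{\geq j}/p)=\phi^{-1}(d^j\H^n(\Delta/p))$ first, then do the inductive cokernel computation; the ``geometric Hodge polygon equals Hodge polygon'' framing is a clean way to organise it.
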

\subsection{Special cases}\label{sec : introexamples}
We make Theorem \ref{thm : main} more explicit for certain families of abelian varieties and their associated Kummer varieties. For abelian varieties, one can use  prismatic Dieudonn\'e theory, from \cite{Dieudonneprismatic}, to make prismatic cohomology more concrete. 
\subsubsection{Products of elliptic curves}
By combining Theorems~\ref{thm : Brauerandhenselisation} and \ref{thm : prismaticinterpretationkernel} with prismatic Dieudonn\'e theory,  we prove the following: 
\begin{proposition}\label{prop : elliptic curves}
Let
$X=Z\times W$ for elliptic curves  $Z,W$  with good reduction and  N\'eron models $\mathcal Z$,$\mathcal W$. Then there exists a natural isomorphism
$$\Br(X_{\overline K})[p]/\Br(X_{\overline K})[p]^{\gb}\simeq \Hom_{\mathbb C_p}
   (Z[p],W[p])/\Hom_{\Ocp}(\mathcal Z [p],\mathcal{W}[p]).$$
	\end{proposition}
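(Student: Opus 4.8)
The plan is to combine Theorems~\ref{thm : Brauerandhenselisation} and \ref{thm : prismaticinterpretationkernel} with prismatic Dieudonné theory to make the left-hand side completely explicit in the case $X = Z \times W$. First, I would apply Theorem~\ref{thm : Brauerandhenselisation} with $n = 2$: since an elliptic curve has good reduction and a product of such has good reduction, we get
\[
\Br(X_{\overline K})[p]/\Br(X_{\overline K})^{\gb}[p] \simeq \Image\big(\H^2(X_{\overline K},\Z/p) \to \H^2(\overline K(X)^{\sh},\Z/p)\big).
\]
Then Theorem~\ref{thm : prismaticinterpretationkernel} (for $n=2$, noting that $\overline K(X)^h$ and $\overline K(X)^{\sh}$ agree on the relevant cohomology, or quoting the statement as packaged) identifies the \emph{kernel} of $\H^2(X_{\overline K},\Z/p) \to \H^2(\overline K(X)^{\sh},\Z/p)$ with $\Ker(\varphi - d : \H^2(\mathcal X_{\Ocp},\Delta/p) \to \H^2(\mathcal X_{\Ocp},\Delta/p))$, so the image above is $\H^2(X_{\overline K},\Z/p)/\Ker(\varphi - d)$. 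Using the étale comparison theorem, $\H^2(X_{\overline K},\Z/p) \simeq \H^2(\mathcal X_{\Ocp},\Delta/p) \otimes_{\Ocpflat} \text{(something killed by }\varphi-1\text{)}$; more precisely I would phrase the right-hand side directly as the cokernel of $\varphi - d$ acting on the appropriate integral structure, which for $p$-torsion coefficients coincides with the étale $\H^2$.

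Next I would pass to the Dieudonné-theoretic description. By the Künneth formula for prismatic cohomology, $\H^2(\mathcal X_{\Ocp},\Delta/p)$ decomposes as $\H^2(\mathcal Z_{\Ocp},\Delta/p) \oplus (\H^1(\mathcal Z_{\Ocp},\Delta/p)\otimes \H^1(\mathcal W_{\Ocp},\Delta/p)) \oplus \H^2(\mathcal W_{\Ocp},\Delta/p)$, and the only summand contributing a $d$-eigenclass for $\varphi$ after the relevant twist is the cross term $\H^1 \otimes \H^1$; the curve factors $\H^2(\mathcal Z,\Delta/p)$ and $\H^2(\mathcal W,\Delta/p)$ are rank-one with $\varphi$ acting by $d$ itself, so they contribute nothing to the quotient (this is precisely why only the cross term survives, matching the $\Hom$-group shape of the answer). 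For the cross term, prismatic Dieudonné theory from \cite{Dieudonneprismatic} identifies $\H^1(\mathcal Z_{\Ocp},\Delta/p)$ with the Dieudonné module of the $p$-divisible group $\mathcal Z[p^\infty]$ reduced mod $p$, and a Frobenius-compatible pairing identifies $\H^1(\mathcal Z_{\Ocp},\Delta/p) \otimes \H^1(\mathcal W_{\Ocp},\Delta/p)$, together with the $\varphi = d$ condition, with $\Hom_{\Ocp}(\mathcal Z[p],\mathcal W[p])$ as a sub-object, and its "generic fibre" with $\Hom_{\Cc_p}(Z[p],W[p])$. The quotient of the latter by the former is exactly the right-hand side of the Proposition.

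I would carry this out in the following order: (1) record the Künneth decomposition of $\H^2(\mathcal X_{\Ocp},\Delta/p)$ and of its $\varphi$-structure; (2) observe that the $\H^2$-of-a-curve summands have $\varphi$ acting as $d$ identically, so lie entirely in $\Ker(\varphi - d)$ and drop out of the quotient; (3) for the cross term, invoke prismatic Dieudonné theory to rewrite $\H^1(\mathcal Z_{\Ocp},\Delta/p)$ in terms of $\mathcal Z[p]$ and identify the pairing; (4) match $\Ker(\varphi - d)$ on the cross term with $\Hom_{\Ocp}(\mathcal Z[p],\mathcal W[p])$ and the full cross term (after the étale comparison) with $\Hom_{\Cc_p}(Z[p],W[p])$, taking the quotient; (5) check naturality in $Z$ and $W$ and independence of the chosen models (the Néron models being canonical). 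The main obstacle I expect is step (3)–(4): pinning down precisely how the Frobenius-equivariant Künneth pairing on prismatic cohomology mod $p$ translates, under the prismatic Dieudonné equivalence, into the natural pairing on $p$-torsion group schemes, and verifying that the $\varphi = d$ locus corresponds on the nose to the $\Ocp$-linear (as opposed to merely $\Cc_p$-linear) homomorphisms — in other words, that the integral structure on the prismatic side matches the group-scheme-theoretic integral structure. The twist bookkeeping (the "$-a$" Breuil–Kisin twists, and which power of $d$ appears where) also needs care so that the eigenvalue condition comes out to be $\varphi = d$ exactly and not some other power.
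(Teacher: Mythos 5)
Your outline follows the paper's proof: combine Theorems~\ref{thm : Brauerandhenselisation} and~\ref{thm : prismaticinterpretationkernel} (with $n=2$, so the exponent is $d^{n-1}=d$), apply K\"unneth on both the \'etale and prismatic sides, drop the $\H^2$-of-curve summands, and convert the cross term via prismatic Dieudonn\'e theory. You are right to identify step (3)--(4) as the crux, but you leave it as a flagged obstacle rather than resolving it, and this is exactly where the content of the proof lives. The way the paper fills this gap is the following chain of identifications: since $\H^1(\mathcal Z_{\Ocp},\Delta/p)$ and $\H^1(\mathcal W_{\Ocp},\Delta/p)$ are finite free, the tensor product rewrites as $\Hom\bigl(\H^1(\mathcal W_{\Ocp},\Delta/p)^{\vee},\H^1(\mathcal Z_{\Ocp},\Delta/p)\bigr)$; then one uses Theorem~\ref{thm : prismaticDieudonne}(4) to write $\H^1 \simeq \mathbb D(\mathcal A[p])$, the twist formula $\mathbb D(G^{\vee})\simeq \mathbb D(G)^{\vee}(-1)$ of Theorem~\ref{thm : prismaticDieudonne}(2), and the \emph{self-duality of} $\mathcal W[p]$ (the Weil pairing), to land on $\Hom\bigl(\mathbb D(\mathcal W_{\Ocp}[p])(1),\mathbb D(\mathcal Z_{\Ocp}[p])\bigr)$. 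The $\varphi=d$ locus of this is, after untwisting, the $\varphi=1$ locus of $\Hom\bigl(\mathbb D(\mathcal W_{\Ocp}[p]),\mathbb D(\mathcal Z_{\Ocp}[p])\bigr)$, i.e.\ $\Hom_{\mathcal C}$ in the Frobenius category, and \emph{full faithfulness} of $\mathbb D$ turns this into $\Hom_{\Ocp}(\mathcal Z[p],\mathcal W[p])$. Without this duality/full-faithfulness argument, the ``Frobenius-compatible pairing'' you invoke is not pinned down and the identification could be off by a twist or a transpose, so this is a genuine missing ingredient rather than mere bookkeeping.

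Two smaller points worth noting. First, you should not describe $\H^2(X_{\overline K},\Z/p)$ as ``the cokernel of $\varphi - d$'': the \'etale comparison identifies it with the $\varphi$-fixed points of $\H^2(\Delta^{(1)}/p)[1/d]$ after an appropriate twist, and the quotient $\H^2(X_{\overline K},\Z/p)/\Ker(\varphi-d)$ is what appears, via the identifications in Lemma~\ref{lem : vanishingasd-fixedpoints}, not a cokernel. Second, the passage from $\H^2$ to $\Br[p]$ on the \'etale side requires the explicit N\'eron--Severi computation $\NS(X_{\mathbb C_p})\simeq \Z\oplus\Z\oplus\Hom_{\mathbb C_p}(Z,W)$, to identify $\Br(X_{\mathbb C_p})[p]\simeq \Hom_{\mathbb C_p}(Z[p],W[p])/\Hom_{\mathbb C_p}(Z,W)$, and one must observe that the image of $\Hom_{\mathbb C_p}(Z,W)$ in $\H^1\otimes\H^1$ already lies in the $\varphi=d$ locus (because morphisms of elliptic curves extend to N\'eron models). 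Your sketch silently conflates $\Br[p]$ with $\H^2$ and so elides this step.
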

	Abusing notation, we write $\Hom_{\overline K}(Z,W)$ for its image in $\Hom_{\overline K}(Z[p],W[p])$. Recalling from~\cite[Proposition~3.3]{SZtorsionEC} that $\Br(X_{\overline K})[p]\simeq \Hom_{\overline K}(Z[p],W[p])/\Hom_{\overline K}(Z,W),$
Proposition \ref{prop : elliptic curves} shows that an element in $\Br(X_{\overline K})[p]$ lies in $\Br(X_{\overline K})[p]^{\gb}$ if and only if a corresponding element in $\Hom_{\overline K}(Z[p],W[p])$ lifts to the integral model over $\Ocp$. Since $X$ has trivial canonical bundle, combining this with Proposition \ref{prop : braueruseful} shows that a Brauer class in $\Br(X)[p]$ associated to a homomorphism $\sigma: Z[p]\rightarrow W[p]$ defined over $K$ has non-constant evaluation map (over $K$) as soon as $\sigma$ does not lift to the integral models over $\Ocp$.

This gives a criterion for having non-constant evaluation map that can be applied to concrete Brauer classes. For example, when $Z=W$ is a CM elliptic curve, we study the Brauer class associated to the action of complex conjugation and show how to reinterpret and generalise examples constructed in \cite{SkoroIeronymou, NewtonCM, NewtonMK3}, see Sections~\ref{subsubsec : CM}--\ref{subsubsec : compare literature}. In this setting, we show in Corollary~\ref{cor : CM} that in most cases of supersingular reduction one has $\Br(X_{\overline K})[p]^{\gb}=0$, which cannot happen in the ordinary reduction case (cf.\ Corollary~\ref{cor : elliptic}).
\subsubsection{Abelian varieties of positive $p$-rank and associated Kummer varieties}
Now suppose that $X$ is an abelian variety. Since $\H^{2}(\mathcal X_{\Ocp},\Delta/p)\simeq \Lambda^2 \H^{1}(\mathcal X_{\Ocp},\Delta/p)$, prismatic Dieudonn\'e theory can be used once again to get more explicit results. We prove a lower bound for the quotient
$\Br(X_{\overline K})[p]/\Br(X_{\overline K})[p]^{\gb}$. Moreover, we show that this bound can be transferred to the Kummer variety associated to an $X[2]$-torsor $T$ over $K$.
\begin{theorem}\label{thm : abelian and kumm varieties}
Let $X$ be an abelian variety of dimension $g\geq 2$ with good reduction. Assume that $X$ has a polarisation of degree prime to $p$ and that the special fibre has $p$-rank $e>0$. Then
\[
    \Dim_{\mathbb F_p}(\Br(X_{\overline K})[p]/\Br(X_{\overline K})[p]^{\gb})\geq 2g-1-e.
\]
Furthermore, if $p$ is odd, then for any $X[2]$-torsor $T$ over $K$, 
\[
    \Dim_{\mathbb F_p}(\Br(\mathrm{Kum}(X_T)_{\overline K})[p]/\Br(\mathrm{Kum}(X_T)_{\overline K})[p]^{\gb})\geq 2g-1-e.
\] 
where $\mathrm{Kum}(X_T)$ is the Kummer variety associated to $T$. 
\end{theorem}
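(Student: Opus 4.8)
The plan is to combine the cohomological description of the paper with prismatic Dieudonn\'e theory and a slope computation. By Theorems~\ref{thm : Brauerandhenselisation} and~\ref{thm : prismaticinterpretationkernel} together with the \'etale comparison theorem for prismatic cohomology --- applied exactly as in the reduction of Theorem~\ref{thm : main} to Theorem~\ref{thm : jumpofdimension}, and using that assumption~\ref{eq : equalityhodgenumber} holds automatically for abelian varieties --- and using that $\H^2(X_{\overline K},\mathbb Z/p)=\Lambda^2\H^1(X_{\overline K},\mathbb Z/p)$ has dimension $\binom{2g}{2}$ while $\H^2(\mathcal X_{\Ocp},\Delta/p)\simeq\Lambda^2 M$ with $M:=\H^1(\mathcal X_{\Ocp},\Delta/p)$ finite free of rank $2g$ over $\Ocpflat$, the first assertion becomes equivalent to
$$\Dim_{\mathbb F_p}\Ker(\varphi-d\colon\Lambda^2 M\to\Lambda^2 M)\ \le\ \binom{2g}{2}-(2g-1-e)\ =\ (g-1)(2g-1)+e.$$

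The crucial input is a three-step Frobenius filtration of $M$ read off from the $p$-rank. Over $\mathbb C_p^{\flat}=\Frac(\Ocpflat)$ the $\varphi$-module $M[1/d]$ has a slope decomposition whose slopes are the Newton slopes of the abelian variety $Y$ --- for instance by the crystalline comparison, or equivalently via the connected--\'etale and multiplicative filtrations of the $p$-divisible group $\mathcal X[p^\infty]$ over the strictly henselian $\Ocp$, accessible through prismatic Dieudonn\'e theory. Since $Y$ has $p$-rank $e$, the slope $0$ occurs in $M[1/d]$ with multiplicity $e$; by the $\lambda\mapsto 1-\lambda$ symmetry of the Newton polygon of an abelian variety, so does the slope $1$; the remaining $2g-2e$ slopes lie in $(0,1)$. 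Intersecting the slope filtration of $M[1/d]$ with $M$ produces a $\varphi$-stable filtration $0\subseteq M_1\subseteq M_2\subseteq M$ of $M$ by direct summands as $\Ocpflat$-modules, whose three graded pieces are, in some order, an isoclinic slope-$1$ module $P$ of rank $e$, a module $Q$ of rank $2g-2e$ all of whose slopes lie in $(0,1)$, and an isoclinic slope-$0$ module $R$ of rank $e$.

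Then $\Lambda^2 M$ carries a $\varphi$-stable filtration whose graded pieces are (up to order) the six modules $\Lambda^2 P$, $P\otimes Q$, $\Lambda^2 Q$, $P\otimes R$, $Q\otimes R$, $\Lambda^2 R$, with slopes in $\{2\}$, $(1,2)$, $(0,2)$, $\{1\}$, $(0,1)$, $\{0\}$ respectively. By the semilinear algebra of Lemma~\ref{lem : d-fixedpoints}, any $\varphi$-module $W$ over $\Ocpflat$ satisfies $\Dim_{\mathbb F_p}\Ker(\varphi-d\colon W)\le\rank W$, and this kernel vanishes once every slope of $W$ exceeds $1$ (a nonzero solution of $\varphi(x)=dx$ would generate a slope-$1$ $\varphi$-submodule, impossible inside a module all of whose slopes are $>1$). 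Since $\varphi-d$ preserves the filtration, $\Lambda^2 P$ and $P\otimes Q$ contribute nothing, whence
$$\Dim_{\mathbb F_p}\Ker(\varphi-d\colon\Lambda^2 M)\ \le\ \binom{2g-2e}{2}+e^2+e(2g-2e)+\binom{e}{2}\ =\ \binom{2g}{2}-\tfrac12\,e(4g-3e-1),$$
and it only remains to check $\tfrac12 e(4g-3e-1)\ge 2g-1-e$, i.e.\ $(e-1)(4g-3e-2)\ge 0$, which holds because $1\le e\le g$ and $g\ge 2$. I expect the main obstacle to be making rigorous, over $\Ocpflat$, the passage from the slope decomposition of $M[1/d]$ and the classical dictionary ``$p$-rank $e$ $\Leftrightarrow$ slopes $0$ and $1$ each of multiplicity $e$'' to a genuine $\varphi$-stable filtration of the lattice $M$ by direct summands, together with careful tracking of prismatic Dieudonn\'e conventions and Breuil--Kisin twists.

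For the second assertion, assume $p$ odd. Since $\mathcal X[2]$ is then finite \'etale over $\Ok$, the $X[2]$-torsor $T$ over $k$ lifts uniquely to an $\mathcal X[2]$-torsor over $\Ok$; twisting $\mathcal X$ by it gives a smooth proper $\Ok$-model $\mathcal X_T$ of a torsor $X_T$ under $X$, and the canonical involution $\iota$ of $\mathcal X_T$ has quotient $\mathcal X_T/\iota$ admitting a crepant resolution $\mathcal K$, a smooth proper $\Ok$-model of $\mathrm{Kum}(X_T)$ satisfying~\ref{eq : equalityhodgenumber}. As $p$ is odd, standard transfer and resolution arguments give a decomposition $\H^2(\mathrm{Kum}(X_T)_{\overline K},\mathbb Z/p)\simeq\Lambda^2\H^1(X_{T,\overline K},\mathbb Z/p)\oplus(\mathbb Z/p)^{2^{2g}}$ (using that $\iota$ acts by $-1$ on $\H^1$ and hence trivially on $\Lambda^2\H^1$) and a $\varphi$-equivariant decomposition $\H^2(\mathcal K_{\Ocp},\Delta/p)\simeq\Lambda^2\H^1(\mathcal X_{T,\Ocp},\Delta/p)\oplus L$, the extra summands being spanned by the $2^{2g}$ exceptional divisors and $\rank L=2^{2g}$. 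Moreover $\mathcal X_{T,\Ocp}$ is a proper torsor under the abelian scheme $\mathcal X_{\Ocp}$ over the valuation ring $\Ocp$, hence has an $\Ocp$-point by the valuative criterion, so $\mathcal X_{T,\Ocp}\simeq\mathcal X_{\Ocp}$ and $\H^1(\mathcal X_{T,\Ocp},\Delta/p)\simeq M$. Applying Theorems~\ref{thm : Brauerandhenselisation} and~\ref{thm : prismaticinterpretationkernel} to $\mathrm{Kum}(X_T)$ and bounding $\Dim_{\mathbb F_p}\Ker(\varphi-d\colon L)\le\rank L=2^{2g}$ by Lemma~\ref{lem : d-fixedpoints}, the two copies of $2^{2g}$ cancel and the statement reduces to the inequality $\Dim_{\mathbb F_p}\Ker(\varphi-d\colon\Lambda^2 M)\le(g-1)(2g-1)+e$ established above. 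The delicate points here are the construction of the good-reduction model $\mathcal K$ over $\Ok$ and the $\varphi$-equivariance of the two decompositions.
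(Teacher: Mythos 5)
Your argument for the abelian variety part takes a genuinely finer route than the paper's, but the justification has two gaps that should be flagged. First, the assertion that ``over $\mathbb C_p^{\flat}=\Frac(\Ocpflat)$ the $\varphi$-module $M[1/d]$ has a slope decomposition whose slopes are the Newton slopes of $Y$'' cannot be right: $\mathbb C_p^{\flat}$ is algebraically closed of characteristic $p$, so every $\varphi$-module over it is isomorphic to the trivial one and carries no slope information whatsoever. The filtration you want must instead be built directly from the group scheme, exactly as in your ``or equivalently'' clause: the multiplicative--connected--\'etale filtration $0\subseteq \mathcal X[p]^{\mathrm{mult}}\subseteq \mathcal X[p]^0\subseteq \mathcal X[p]$ over the strictly henselian $\Ocp$, with $\mathcal X[p]^{\mathrm{mult}}\simeq \mu_p^e$ and $\mathcal X[p]^{\et}\simeq (\Z/p)^e$ by self-duality of $\mathcal X[p]$, pushed through the contravariant exact functor $\mathbb D$ of Theorem~\ref{thm : prismaticDieudonne}. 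Second, the claim that $\Ker(\varphi-d)$ vanishes on a piece ``all of whose slopes exceed $1$'' is not a priori meaningful: as the paper stresses in Example~\ref{ex : slope}, the individual slopes of a $\varphi$-module over $\Ocpflat$ are \emph{not} well-defined, only their sum is. What is correct and what you actually need is the direct computation: writing $P\otimes Q\simeq Q(-1)^e$, one has $\Ker(\varphi-d\colon Q(-1))=Q^{\varphi=1}$, which vanishes by Theorem~\ref{thm : prismaticDieudonne}(5) because $Q=\mathbb D(G^{00})$ with $G^{00}$ connected, hence $(G^{00})^{\et}=0$; similarly $\Lambda^2 P\simeq\mathcal O(-2)^{\binom{e}{2}}$ has $\Ker(\varphi-d)=\mathbb D(\mu_p)^{\varphi=1}=0$. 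Once repaired along these lines, your three-step filtration does yield the bound $\Dim_{\F_p}\Ker(\varphi-d\colon\Lambda^2 M)\le \binom{2g}{2}-\tfrac12 e(4g-3e-1)$, which is (weakly) sharper than what is needed. The paper, by contrast, uses only the two-step sequence $0\to\mu_p\to\mathcal X[p]\to J\to 0$ with $\rank(J^{\et})=e$, applies $\wedge^2\circ\mathbb D$ to get $0\to\wedge^2\mathbb D(J)\to \H^2(\Delta/p)\to\mathbb D(J)\otimes\mathcal O(-1)\to 0$, bounds the first piece trivially by its rank and the second exactly by $e$ via Theorem~\ref{thm : prismaticDieudonne}(5), getting $\binom{2g-1}{2}+e$ — weaker but sufficient, and with no need to invoke slopes at all.

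For the Kummer half your plan diverges much more sharply from the paper and is substantially heavier than necessary. The paper's proof is two lines: by Theorem~\ref{thm : Brauerandhenselisation} it suffices to compare the images of $\H^2$ in $\H^2$ of the strictly henselian function fields, and since $\overline K(\mathrm{Kum}(X_T))\hookrightarrow\overline K(X)$ is a degree-$2$ extension and $p$ is odd, restriction followed by corestriction on $\H^2(-,\Z/p)$ is multiplication by $2$, hence injective, so the two images coincide. Your approach instead requires constructing a good-reduction model $\mathcal K$ of $\mathrm{Kum}(X_T)$ over $\Ok$, $\varphi$-equivariant prismatic blow-up and quotient decompositions $\H^2(\mathcal K_{\Ocp},\Delta/p)\simeq\Lambda^2\H^1(\mathcal X_{T,\Ocp},\Delta/p)\oplus L$, and triviality of the twisted torsor over $\Ocp$. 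These steps are plausibly correct, but each is a nontrivial assertion whose proof you only sketch, and all of them are sidestepped by the transfer argument. I would recommend adopting the paper's argument for this part.
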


The bounds given in Theorem~\ref{thm : abelian and kumm varieties} are attained when $X$ is a product of two elliptic curves, see Corollary~\ref{cor : elliptic}. 
 \begin{remark}
 Some of the results in this article admit slight generalisations with similar proofs, at the expense of adding more notation and introducing more cohomology theories. For example, Theorem \ref{thm : Brauerandhenselisation} holds with $p$ replaced by $p^n$ for any $n\in\Z_{>0}$, as can be seen using truncated de Rham--Witt complexes, while Proposition \ref{prop : elliptic curves} can be extended to general products satisfying \eqref{eq : equalityhodgenumber}, using integral prismatic cohomology and the results in \cite{uinfinitytorsion}. Since our main purpose was to prove Theorem \ref{thm : main} and its Corollaries \ref{cor : weak}, \ref{cor : potrel}, for which $p$-torsion classes give the best possible result, we have chosen to keep the notation, the machinery and the proofs as minimal as possible to arrive there.
    \end{remark}
\subsection{Organisation of the paper}
In Section \ref{sec : brauerclasses}, we study the relationship between Brauer classes with non-constant evaluation, henselisation and $p$-adic vanishing cycles. In Section \ref{sec : prismatic}, after recalling some preliminaries on prismatic cohomology, we relate $p$-adic vanishing cycles (hence Brauer classes with non-constant evaluation) to prismatic cohomology. In Section \ref{sec : totalslope} we prove a Newton-versus-Hodge type of theorem for modulo $p$ prismatic cohomology. In Section \ref{sec : prooffinal} we put everything together to prove the main Theorem \ref{thm : main}. Finally, in Section \ref{sec : abelian varieties} we use the techniques of the preceding sections to study products of elliptic curves and abelian varieties, where the results are more explicit.
\subsection{Acknowledgements}
This research was partly supported by the grant ANR–23–CE40–0011 of Agence Nationale de la Recherche. Rachel Newton was supported by UKRI Future Leaders Fellowship MR/T041609/1 and MR/T041609/2.
Part of this work was done during the special trimester \emph{Arithmetic geometry of K3 surfaces} at the Bernoulli Center for Fundamental Studies. We thank the organisers, as well as Stefan Schr\"oer for a very helpful discussion on a filtration on the $p$-torsion of elliptic curves, and Livia Grammatica for a very nice talk on the Nygaard filtration in positive characteristic. We also thank Arthur-César Le Bras and Matthew Morrow for useful email exchanges, and  Giuseppe Ancona, Martin Bright and Olivier Wittenberg for useful comments on a first version of this article.  

\subsection{Conventions}
If $L$ is a valued field, we write $\mathcal O_L$ for the valuation ring, $\mathfrak m_L\subseteq \mathcal O_L$ for the maximal ideal, and $k_L$ for the residue field. If $\mathcal O_L$ is a discrete valuation ring, we write $\pi_L$ for a uniformiser and we omit the subscript $_L$ if it is clear from the context. If $K$ is a $p$-adic field, we write $\mathbb C_p$ for the completion of an algebraic closure of $K$. 

If $R\rightarrow S$ is a morphism of rings and $X$ is a scheme over $R$, we write $X_S:=X\times_RS$. If $X$ is a scheme and $A$ a ring, we write $\Sh( X, A)$ for the category of \'etale sheaves on $X$ with coefficients in $A$ and $D^{b}(X,A)$ for its bounded derived category. We omit $A$ if $A=\Z$. If $\mathcal F^{\bullet}$ is in $D^{b}(X,A)$ and $n\in \mathbb Z$, we write $\tau_{\leq n}\mathcal F^{\bullet}$ and $\tau_{>n}\mathcal F^{\bullet}$ for the canonical truncations, so that there is an exact triangle
$$\tau_{\leq n}\mathcal F^{\bullet}\rightarrow \mathcal F^{\bullet}\rightarrow \tau_{>n}\mathcal F^{\bullet}.$$
We let $\mathcal H^i(\mathcal F^{\bullet})$ be the $i^{th}$ cohomology sheaf of $\mathcal F^{\bullet}$ and $\H^i(X,\mathcal F^{\bullet})$ be the hypercohomology of $\mathcal F^{\bullet}$. We omit $X$ if it is clear from the context. 
We write $\mathcal F^{\bullet}/p:=\mathcal F^{\bullet}\otimes^L \Z/p$
for the derived tensor product with $\Z/p$. This gives an exact functor
$$(-)\otimes^L \Z/p :D^{b}(X,A)\rightarrow D^{b}(X,A/p).$$
Most of the time, when talking about global differential forms on varieties over a field, we will suppress from the notation the subscript keeping track of the variety itself. This will not be the case in Section~\ref{sec : prismatic}, where we work with differential forms and the de Rham complex on schemes over a given base ring. 
\section{Brauer classes with non-constant evaluation map}\label{sec : brauerclasses}
In this section, we prove Theorem \ref{thm : Brauerandhenselisation} and its Corollary \ref{cor : Brauervanishing} that gives a cohomological interpretation of geometrically boring Brauer classes. In order to do this, we start in Section \ref{subsec : filtrationbrauer} by recalling some results and notation from \cite{Kato, BNwild} about residues and refined Swan conductors. In Section \ref{subsec : proofofbrauerhens}, we use this theory to prove Theorem \ref{thm : Brauerandhenselisation}, from which we deduce Corollary \ref{cor : Brauervanishing}. 
\subsection{Filtration on the Brauer group and refined swan conductor}\label{subsec : filtrationbrauer}
Let $K$ be a finite extension of $\Q_p$ with residue field $k$. Fix a uniformiser $\pi$ of $\mathcal O_K$. Let $\mathcal X/\mathcal O_K$ be a smooth proper scheme with special fibre $Y/k$ and generic fibre $X/K$. Let $K(X)$ be the function field of $X$, let $K(X)^{\mathrm h}$ be its henselisation  with respect to the $p$-adic place, and let $K(X)^{\sh}$ be its strict henselisation. 
Let $g_K:\Br(X)[p]\rightarrow \Br(K(X)^{\sh})[p]$ be the restriction map. 

From \cite{BNwild}, we learn that by pulling back Kato's \cite{Kato} Swan conductor filtration defined on $\H^2(K(X)^h,\Z/p(1))= \Br(K(X)^h)[p]$ to $\Br(X)[p]$, we get a finite exhaustive increasing filtration
$$0\subseteq \fil_0\Br(X)[p]\subseteq \dots  \subseteq \fil_i\Br(X)[p]=\Br(X)[p].$$
This filtration is such that
\begin{equation}\label{eq: fil0 as kernel to strict hens}
    \fil_0\Br(X)[p]=\Ker(\Br(X)[p]\rightarrow \Br(K(X)^{\sh})[p])
\end{equation}
and there are maps, realised as pullbacks of the analogous maps defined by Kato \cite{Kato} at the level of the henselian field $K(X)^h$, 
$$\partial: \fil_0\Br(X)[p]\rightarrow \H^1(Y,\Z/p)\  \text{ and }  \ \text{rsw}_{n,\pi}\colon \fil_n\Br(X)[p]\rightarrow \H^0(Y,\Omega^1)\oplus \H^0(Y,\Omega^2), \text{ for }n\geq 1,$$
 such that $\fil_{n-1}=\ker(\text{rsw}_{n,\pi})$. 
In \cite{BNwild} Bright and Newton show that these maps control arithmetic properties of the elements in the Brauer group. In particular, they satisfy the following properties.
\begin{theorem}\cite[Corollary 3.7 and Theorem B]{BNwild}\label{thm : summaryBN} \leavevmode
    \begin{enumerate}
        \item The kernel of the residue map $\partial$ coincides with $\Br(\mathcal{X})[p]$;
        \item Let $n\geq 1$ and $\A\in \fil_n\Br(X)[p]$ with $\mathrm{rsw}_{n,\pi}(\A)=(\alpha,\beta)\in \H^0(Y,\Omega^2)\oplus \H^0(Y,\Omega^1)$. If there exists $P_0\in Y(k)$ such that $(\alpha_{P_0},\beta_{P_0})\ne 0$, then $\mathrm{ev}_{\A}\colon X(K)\rightarrow \Br(K)[p]$ is surjective.
    \end{enumerate}
\end{theorem}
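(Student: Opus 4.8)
Both parts are proved in \cite{BNwild} (part (1) is \cite[Corollary 3.7]{BNwild}, part (2) is \cite[Theorem B]{BNwild}); here I sketch how one would argue.

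For (1), the plan is to trace through the definitions and chase exact sequences. By construction $\partial$ is the restriction to $\fil_0(X)$ of Kato's residue map for the henselian discretely valued field $K(X)^{\mathrm h}$, whose residue field is the (imperfect) function field $k(Y)$. So first I would invoke Kato's residue exact sequence for the henselian discrete valuation ring $\mathcal O_{\mathcal X,\eta_Y}^{\mathrm h}$, which identifies the kernel of this residue map with $\Br(\mathcal O_{\mathcal X,\eta_Y}^{\mathrm h})[p]$, i.e.\ with the classes unramified along the generic point $\eta_Y$ of $Y$. Pulling back, $\Ker(\partial)$ is exactly the set of $\A\in\Br(X)[p]$ that are unramified along $\eta_Y$. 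Next I would apply purity for the Brauer group of the regular scheme $\mathcal X$: a class in $\Br(K(X))[p]$ lies in $\Br(\mathcal X)[p]$ if and only if it is unramified at every codimension-one point of $\mathcal X$. The codimension-one points of $\mathcal X$ are those of $X$ — at which every element of $\Br(X)[p]$ is automatically unramified, since $X$ is smooth so $\Br(X)[p]$ sits inside the unramified subgroup of $\Br(K(X))[p]$ — together with $\eta_Y$; hence $\Ker(\partial)=\Br(\mathcal X)[p]$. The only slightly delicate point is the refinement making the residue of a class in $\Br(X)[p]$ land in $\H^1(Y,\Z/p)\hookrightarrow\H^1(k(Y),\Z/p)$, for which one uses the smoothness of $Y$.

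For (2), the plan is to localise to a $p$-adic residue disc and invoke Kato's explicit reciprocity. By properness $X(K)=\mathcal X(\Ok)$, and reduction modulo $\m_K$ gives a map $X(K)\to Y(k)$ which, by smoothness of $\mathcal X/\Ok$ and Hensel's lemma, is surjective with fibre over the given point $P_0$ a polydisc $D_{P_0}$ of dimension $\dim X$; it therefore suffices to show $\ev_\A$ is surjective onto $\Br(K)[p]\simeq\Z/p$ already on $D_{P_0}$. For $Q\in D_{P_0}$ one computes $\ev_\A(Q)$ by pulling $\A$ back along $Q\colon\Spec\Ok\to\mathcal X$; since $\A$ has Swan conductor $n$ and $\mathrm{rsw}_{n,\pi}(\A)=(\alpha,\beta)$ along $Y$, Kato's explicit formula for the symbol expresses $\ev_\A|_{D_{P_0}}$ as $\mathrm{Tr}_{k/\F_p}$ composed with an explicit map $D_{P_0}\to k$ built out of $(\alpha_{P_0},\beta_{P_0})$, the uniformiser $\pi$, and the local coordinates of $Q$ near $P_0$. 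Because the leading coefficients of those coordinates range over the whole residue field $k$, and because $(\alpha_{P_0},\beta_{P_0})\neq0$ — so the relevant pairing against a suitable tangent direction, or pair of directions, is nonzero — this map $D_{P_0}\to k$ is surjective; as $\mathrm{Tr}_{k/\F_p}$ is also surjective, $\ev_\A\colon X(K)\to\Br(K)[p]$ is surjective.

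I expect the main obstacle to be the explicit reciprocity computation underpinning (2): one must make Kato's description of the refined Swan conductor in terms of differential symbols precise enough to conclude not merely that $\ev_\A$ is non-constant on $D_{P_0}$ but that it is \emph{surjective} onto $\Z/p$, which is precisely where one exploits that the relevant local coefficients sweep out the entire residue field $k$ and that $\mathrm{Tr}_{k/\F_p}$ is onto. By contrast, part (1) is essentially formal once Kato's residue sequence and Brauer-group purity are in hand.
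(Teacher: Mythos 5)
The paper supplies no proof here: Theorem~\ref{thm : summaryBN} is stated purely as a citation to \cite[Corollary 3.7 and Theorem B]{BNwild}, as the header makes explicit. You correctly acknowledge this at the outset, and your sketch is a plausible reconstruction of what the cited arguments do: for (1), Kato's residue sequence for the henselian local ring at $\eta_Y$ identifies $\Ker(\partial)$ with classes unramified along $Y$, and absolute purity for the Brauer group of the regular scheme $\mathcal{X}$ converts this into membership in $\Br(\mathcal{X})[p]$ (using that $\Br(X)[p]$ is already unramified at the remaining codimension-one points since $X$ is smooth); for (2), one restricts to the residue disc $D_{P_0}$ and uses Kato's explicit description of the symbol in terms of $\rsw_{n,\pi}(\A)$, with surjectivity coming from the fact that the leading coefficients sweep out $k$ and $\mathrm{Tr}_{k/\mathbb F_p}$ is onto. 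Your account of (2) is a little vague on how the $2$-form $\alpha$ (as opposed to the $1$-form $\beta$) enters the explicit formula — the two cases $\alpha_{P_0}\neq 0$ and $\beta_{P_0}\neq 0$ require slightly different choices of perturbation direction(s) in $D_{P_0}$ — but you flag precisely this computation as the technical crux, which is the right assessment.
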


In order to prove Theorem \ref{thm : Brauerandhenselisation}, we need to give more details on the construction of the residue map $\partial$ at the level of the henselian field $K(X)^h$. 

\vspace{1mm}
\subsubsection{The map  $\lambda_\pi$}
Following \cite{Kato},  we write
$$\Z/p(1):=\Omega^{1}_{\log}[-1] \text{ in } D^b(k(Y)) \quad \text{and} \quad \Z/p(1)\text{ in } D^b(K(X)^h)$$
for the ($1$-shifted) sheaf of $1$-logarithmic differential forms and the Tate twist of the constant sheaf, respectively.

In \cite[(1.4)]{Kato} Kato builds lifting maps 
\[
\iota^1 \colon \H^1(k(Y),\Z/p)\rightarrow \H^1(K(X)^h,\Z/p)\quad \text{and}\quad \iota^2 \colon \H^2(k(Y),\Z/p(1))\rightarrow \H^2(K(X)^h,\Z/p(1)).
\] 
Using the Kummer map $(K(X)^h)^\times \rightarrow \H^1(K(X)^h, \Z/p(1))$ and the cup product, we define a product
\begin{align*}
        \H^1(K(X)^h,\Z/p)\times (K(X)^h)^\times &\rightarrow \H^{2}(K(X)^h,\Z/p(1))\\
        (\chi,a) &\mapsto \{\chi, a\}.
\end{align*}
By  \cite[(1.4)]{Kato}, the map 
\begin{align*}
    \lambda_\pi\colon \H^2(k(Y),\Z/p(1))\oplus \H^{1}(k(Y),\Z/p)&\rightarrow \H^2(K(X)^h,\Z/p(1))\\
    (\chi,\psi)&\mapsto \iota^2(\chi)+\{\iota^{1}(\psi),\pi\}
\end{align*}
is injective. 
\subsubsection{Definition of $\fil_0$ and $\partial$}\label{sec : def0}
The subgroup $\fil_0\Br(K(X)^h)[p]\subseteq \Br(K(X)^h)[p]= \H^2(K(X)^h,\Z/p(1))$ is then defined\footnote{Kato's original definition of $\fil_0$ is different; however, he proves in \cite[Proposition 6.1(1)]{Kato} that the definition we give here is equivalent to his. In \emph{loc.\ cit.}, he also proves that $\fil_0\Br(K(X)^h)[p]$ can be realised as the kernel of the natural map $\H^2(K(X)^h,\Z/p(1))\rightarrow \H^2(K(X)^{sh},\Z/p(1))$, which gives the description of $\fil_0$ provided in equation~\eqref{eq: fil0 as kernel to strict hens}.} as 
\[  
   \fil_0\Br(K(X)^h)[p]:=\Image\big (\lambda_\pi\colon \H^2(k(Y),\Z/p(1))\oplus \H^1(k(Y),\Z/p)\rightarrow \H^2(K(X)^h,\Z/p(1))\big ).
\]
Since  $\lambda_\pi$ is injective, we can then define the residue map
\begin{equation}\label{eq : defres}
 \partial:=p_2\circ \lambda_\pi^{-1} \colon \fil_0 \Br(K(X)^h)[p]\rightarrow \H^1(k(Y),\Z/p)
    \end{equation}
    which is the projection onto $\H^1(k(Y),\Z/p)$ of the inverse of $\lambda_\pi$. Its restriction to $\fil_0 \Br(X)[p]\subseteq \Br(X)[p]$ induces (see~\cite[Proposition 3.1]{BNwild}) the required map
\[
    \partial\colon\fil_0 \Br(X)[p] \rightarrow \H^1(Y,\Z/p).
\]
\subsection{Proof of Theorem \ref{thm : Brauerandhenselisation}}\label{subsec : proofofbrauerhens}
Since elements of $\NS(X_{\overline K})$ are already zero when restricted to $\overline K(X)$,
the restriction map
$$\H^2(X_{\overline K},\Z/p(1))\rightarrow \H^2(\overline K(X)^{\sh},\Z/p(1))$$
factors through a map
$$g:\Br(X_{\overline K})[p]\rightarrow \H^2(\overline K(X)^{\sh},\Z/p(1)).$$

Theorem~\ref{thm : Brauerandhenselisation} now follows from the equality of sets (1) and (2) in the next proposition, while the equality of sets (2) and (3) shows that the conclusion of Theorem \ref{thm : main} is equivalent to the statement that $\Br(X_{\overline K})[p]^{\gb}\neq \Br(X_{\overline K})[p]$.
\begin{proposition}\label{prop : gb alt}
    The following sets are equal:
    \begin{enumerate}
        \item $\Ker(g)$;
        \item $\Br(X_{\overline K})[p]^{\gb}$;
        \item $\bigcup_{L/K \textrm{ finite }}\{\A_{\bar{K}}\mid \A\in\Br(X_L)[p] \textrm{ and } \forall F/L \textrm{ finite } \exists F'/F \textrm{ finite with } 
|\ev_{\A}^{F'}(X(F'))|<p \}$.
    \end{enumerate}
\end{proposition}

The inclusion $(2)\subseteq (3)$ follows immediately from the definition of $\Br(X_{\overline K})[p]^{\gb}$. We will prove $(1)\subseteq (2)$ and $(3)\subseteq (1)$.
\subsubsection{$(1)\subseteq(2)$}
First we prove the inclusion $\Ker(g)\subseteq \Br(X_{\overline K})[p]^{\gb}$. Let $\widetilde \A\in \Ker(g)$ and let $L$ be a finite extension of $K$ such that there exists $\A\in \Br(X_L)[p]$ with $\A_{\overline K}=\widetilde \A$. Since $\widetilde \A\in \Ker(g)$, upon replacing $L$ with a finite extension, we can assume that $\A$ is in the kernel of $g_L\colon \Br(X_L)[p]\rightarrow \Br(L(X)^{\sh})[p]$, i.e.\ in $\fil_0\Br(X_L)[p]$, cf.~\eqref{eq: fil0 as kernel to strict hens}. By Proposition \ref{prop:geomfil0} below, upon replacing $L$ with a finite extension, we can assume that $\partial(\A)=0$. By Theorem~\ref{thm : summaryBN}(1), this implies that $\A\in \Br(\mathcal X_L)[p]\subseteq \Br(X_L)[p]$, hence $\A_F\in \Br(\mathcal X_F)[p]\subseteq \Br(X_F)[p]$ for all finite extensions $F/L$. But then, since $\mathcal X(\mathcal O_F)=X(F)$, the evaluation map factors through $\Br(\Os_F)=0$ and hence is constantly zero.

\begin{proposition}\label{prop:geomfil0}
Let $\A\in\fil_0\Br (X)[p]$. Then the image of $\A$ in $\Br (K(X)^{\mathrm h})[p]$ can be written as a sum $\B+\C$ where $\B\in\fil_0\Br(K(X)^{\mathrm h})[p]$ has residue zero and $\C\in \Br (K(X)^{\mathrm h})[p]$ is such that $\C_{L(X)^{\mathrm h}}=0$ for some finite extension $L/K$. 
\end{proposition}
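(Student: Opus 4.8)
The plan is to prove the two implications separately. The implication ``$\Leftarrow$'' is formal; the implication ``$\Rightarrow$'' is proved by induction on Kato's Swan conductor.

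\emph{Proof of ``$\Leftarrow$''.} Suppose the image $\A^{\hens}$ of $\A$ in $\Br(K(X)^{\hens})$ equals $\B+\C$ with $\B\in\fil_0\Br(K(X)^{\hens})$ and $\C_{K(X)^{\hens}L}=0$ for some finite $L/K$. Base-changing to $L$, the image of $\A_L$ in $\Br(L(X)^{\hens})$ equals that of $\B$, since $\C$ dies there. Writing a general element of $\fil_0\Br(K(X)^{\hens})=\Image(\lambda_\pi)$ as $\lambda_\pi(\delta\alpha,\delta\beta)$ and applying the base-change compatibility \eqref{eq : base change and lambda pi} of the maps $\lambda$, this element is carried into $\Image(\lambda_{\pi_L})=\fil_0\Br(L(X)^{\hens})$; hence the image of $\A_L$ lies in $\fil_0\Br(L(X)^{\hens})$. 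Since the filtration on $\Br(X_L)[p]$ is by definition pulled back from Kato's filtration along $\Br(X_L)[p]\to\Br(L(X)^{\hens})[p]$ (equivalently, by \eqref{eq: fil0 as kernel to strict hens}), this gives $\A_L\in\fil_0\Br(X_L)$, so one takes $K'=L$. Note that the residue-zero hypothesis on $\B$ is not used here.

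\emph{Proof of ``$\Rightarrow$''.} Assume $\A_{K'}\in\fil_0\Br(X_{K'})$; equivalently, the image $\A^{\hens}$ of $\A$ in $\Br(K(X)^{\hens})[p]$ restricts into $\fil_0\Br(K'(X)^{\hens})$. Let $n$ be minimal with $\A^{\hens}\in\fil_n\Br(K(X)^{\hens})$; I argue by induction on $n$ that $\A^{\hens}=\B+\C$ with $\B\in\fil_0\Br(K(X)^{\hens})$ and $\C$ split by a finite extension of $K$ (meaning $\C_{K(X)^{\hens}L}=0$ for some finite $L/K$), and then correct $\B$ to have residue zero. If $n=0$, take $\B=\A^{\hens}$, $\C=0$. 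If $n\geq1$, the first point is that $\rsw_{n,\pi}(\A^{\hens})=(\alpha,\beta)$ has $\alpha=0$: factoring $K'/K$ into an unramified extension followed by a (necessarily wildly) totally ramified one and applying \eqref{eq : base change and lambda pi} to the identity $\{\A^{\hens},1+\pi^nT\}=\lambda_\pi(\delta(T\alpha),\delta(T\beta))$, the $\Omega^2$-component of the refined Swan conductor over $K'(X)^{\hens}$ is a unit multiple of the image of $\alpha$ under the separable residue-field extension $k(Y)\hookrightarrow k'(Y')$ — hence nonzero if $\alpha\neq0$ — while the $\Omega^1$-component is multiplied by the ramification index and vanishes mod $p$; were $\alpha\neq0$, this would make the Swan conductor of $\A^{\hens}$ over $K'(X)^{\hens}$ positive, contradicting $\A^{\hens}_{K'(X)^{\hens}}\in\fil_0$. (One may alternatively deduce $\alpha=0$ from Theorem \ref{thm : summaryBN}(2), since a class becoming $\fil_0$ after a finite extension has eventually constant evaluation, as in the ``$\subseteq$'' part of the proof of Theorem \ref{thm : Brauerandhenselisation}.) Next, the kernel of $\Br(K(X)^{\hens})[p]\to\Br(K'(X)^{\hens})[p]$ contains every cup product $\{\chi_0,f\}$ with $\chi_0\in\H^1(K,\Z/p)$ split by $K'/K$ (viewed in $\H^1(K(X)^{\hens},\Z/p)$ by pullback) and $f\in(K(X)^{\hens})^\times$, and each such class is split by the finite extension $K'$ of $K$; using Kato's formulas for the refined Swan conductor of a cup product, together with perfectness of the residue field and the fact that the hypothesis bounds $n$ in terms of the ramification of $K'/K$, one chooses a wildly ramified $\chi_0$ of the appropriate Swan conductor and an $f$ with $\rsw_{n,\pi}(\{\chi_0,f\})=(0,\beta)$. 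Setting $\C_0=\{\chi_0,f\}$ we get $\A^{\hens}-\C_0\in\fil_{n-1}\Br(K(X)^{\hens})$; this class still restricts into $\fil_0$ over the compositum of $K'$ with the splitting field of $\C_0$, so the inductive hypothesis gives $\A^{\hens}-\C_0=\B+\C_1$, whence $\A^{\hens}=\B+(\C_0+\C_1)$ with $\C_0+\C_1$ split by a finite extension of $K$.

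It remains to arrange residue zero: given $\A^{\hens}=\B+\C$ with $\B\in\fil_0$, put $\psi:=\partial(\B)\in\H^1(k(Y),\Z/p)$ and replace $\B$ by $\B-\lambda_\pi(0,\psi)$ and $\C$ by $\C+\lambda_\pi(0,\psi)=\C+\{\iota^1(\psi),\pi\}$; since $\partial(\lambda_\pi(0,\psi))=\psi$, the new $\B$ has residue zero, and $\{\iota^1(\psi),\pi\}$ is already split over $K(\pi^{1/p})(X)^{\hens}$ because there $\pi$ becomes a $p$-th power, so the new $\C$ is still split by a finite extension of $K$. The step I expect to be the main obstacle is the construction of $\C_0$: matching $\rsw_{n,\pi}(\A^{\hens})$ — which the argument above has constrained to lie in $\{0\}\oplus\Omega^1_{k(Y)}$ — against the refined Swan conductors realised by the cup products $\{\chi_0,f\}$ split by $K'/K$. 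This requires a careful bookkeeping of Kato's refined Swan conductor under cup product and under change of uniformiser, and should be isolated as a separate lemma.
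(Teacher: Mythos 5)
Your ``$\Leftarrow$'' direction is sound and in substance matches the paper's (the paper cites~\cite[Lemma~2.16]{BNwild} for the compatibility of $\fil_0$ with finite extensions, where you unwind~\eqref{eq : base change and lambda pi} together with surjectivity of $\delta$; either works, and you are right that the residue-zero hypothesis on $\B$ is not used here).

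Your ``$\Rightarrow$'' direction takes a genuinely different and considerably harder route than the paper, and it has a gap at its core. The paper's argument involves no induction on the Swan conductor: since $\A_{K'}\in\fil_0$, one writes $\A_{K(X)^{\hens}K'}=\lambda_{\pi'}(\chi,\psi)$ directly, observes from $\Gal$-invariance of $\A$ and injectivity of $\lambda_{\pi'}$ that $\chi$ is fixed by $\Gal(k(Y)'/k(Y))$, descends $\chi$ to $\chi_0\in\Br(k(Y))$ via the cyclic-descent Lemma~\ref{lem:F descent}, and sets $\B:=\lambda_\pi(\chi_0,0)$, which has residue zero by construction; then~\eqref{eq : base change and lambda pi} identifies $(\A-\B)_{K(X)^{\hens}K'}$ with $\{\iota^1(\psi),\pi'\}$, which is split over $K'(\pi'^{1/p})$. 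No analysis of $\rsw_{n,\pi}(\A)$ over the base field is needed.

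Two steps of your induction are not actually established. First, the claim that $\rsw_{n,\pi}(\A^{\hens})$ has vanishing $\Omega^2$-component is not proved: \eqref{eq : base change and lambda pi} concerns $\lambda_\pi$ applied to elements already in $\fil_0$, not the transformation of $\rsw_n$ under a wildly ramified base change in which $n$ itself changes, so the assertion that ``the $\Omega^2$-component \ldots\ is a unit multiple of the image of $\alpha$'' would require its own, quite delicate, lemma. Your alternative justification is circular: the implication ``a class becoming $\fil_0$ after a finite extension has eventually constant evaluation'' is, in the paper's logical order, deduced \emph{from} Proposition~\ref{prop:geomfil0} (this is exactly how the ``$\subseteq$'' half of the proof of Theorem~\ref{thm : Brauerandhenselisation} uses the present proposition to pass from $\fil_0$ to zero residue before concluding constancy), so it cannot be invoked while proving the proposition. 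Second, you explicitly leave open the construction of the cup product $\C_0=\{\chi_0,f\}$, split by a finite extension of $K$, with $\rsw_{n,\pi}(\C_0)=\rsw_{n,\pi}(\A^{\hens})$; this is the entire content of the inductive step, and without it the argument does not close. Your residue-zero correction at the end is correct, and is essentially how the paper arranges $\partial(\B)=0$, but the two preceding points must be filled in before it applies. The descent argument above sidesteps both issues entirely.
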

\begin{proof}
By definition of $\fil_0\Br (X)[p]$ (see Section ~\ref{sec : def0}), there exist $(\chi,\psi)\in \H^2(k(Y),\Z/p(1))\oplus \H^1(k(Y),\Z/p)$ such that 
\[\A_{K(X)^{\mathrm h}}=\lambda_{\pi}(\chi,\psi).\]
Let $\B=\lambda_{\pi}(\chi,0)\in \fil_0\Br(K(X)^{\mathrm h})[p]$. Note that $\B$ has residue zero by definition  \eqref{eq : defres}, hence it remains to prove that $\C:=\A_{K(X)^{\mathrm h}}-\B$ vanishes after a finite extension. This follows from the equalities 
$$\C_{K(X)^{\mathrm h}}=\lambda_{\pi}(\chi,\psi)-\lambda_{\pi}(\chi,0)=\lambda_{\pi}(0,\psi)=\{\iota^1(\psi),\pi \}$$
since $\{\iota^1(\psi),\pi \}$ is split by adjoining a root of $\pi$.
\end{proof}
\subsubsection{$(3)\subseteq(1)$}
Let $\widetilde \A\in \Br(X_{\overline K})[p]\setminus \Ker(g)$ and let $L/K$ be a finite extension such that there exists $\A\in \Br(X_L)[p]$ with $\A_{\overline K}=\widetilde \A$. Since $\widetilde \A \not \in \Ker(g)$, for every finite extension $F/L$ one has $g_F(\A)\neq 0$ where  $g_F:\Br(X_F)[p]\rightarrow \Br(F(X)^{\sh})[p]$ is the natural map, i.e. $\A\not \in \fil_0\Br(X_F)[p]$. Upon replacing $L$ with a finite extension, Proposition \ref{prop : braueruseful} below shows that the evaluation map for $\widetilde\A$ takes $p$ values, as required. This completes the proof of Proposition~\ref{prop : gb alt}, and hence the proof of Theorem~\ref{thm : Brauerandhenselisation}. 
\begin{proposition}\label{prop : braueruseful}
	Let $\mathcal{A}\in \Br(X)[p]$ and assume that $\A \not \in \fil_0\Br(X_F)[p]$ for all finite extensions $F$ of $K$.
	Let $k'/k$ be a finite extension such that, for $i=1,2$, for every non-zero $\gamma\in \H^0(Y,\Omega^i)$ there exists $P\in Y(k')$ such that $\gamma_P\neq 0$.
    Let   $K'/K$ be the corresponding unramified extension. Then, for every finite extension $L/ K'$ the evaluation map $\ev_{\A}^L:X(L)\rightarrow \Br(L)[p]$ is surjective.
	 \end{proposition}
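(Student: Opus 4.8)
The plan is to reduce the statement to the refined Swan conductor machinery already set up, using the key fact that $\A \notin \fil_0(X_F)$ for all finite $F$ forces $\A$ to live in some higher graded piece $\fil_n(X_F) \setminus \fil_{n-1}(X_F)$ with a nonzero refined Swan conductor, and then invoke Theorem~\ref{thm : summaryBN}(2) to get surjectivity of the evaluation map at a point. The subtlety is that the filtration level $n$ and the value of $\rsw_{n,\pi}(\A)$ may change under base extension, so one has to control how these behave when passing from $K$ to a finite extension $L$, and this is where the base-change formula \eqref{eq : base change and lambda pi} for $\lambda_\pi$ (and its consequence for $\rsw_{n,\pi}$) does the work.

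\textbf{Step 1.} Since the filtration $\fil_\bullet(X_F)$ is finite and exhaustive with $\fil_0$ the kernel to the strict henselisation, the hypothesis $\A \notin \fil_0(X_F)$ for all $F$ means that for each finite extension $F/K$ there is a well-defined integer $n_F \geq 1$ with $\A \in \fil_{n_F}(X_F) \setminus \fil_{n_F - 1}(X_F)$, and $\rsw_{n_F,\pi_F}(\A) \neq 0$ in $\H^0(Y_F, \Omega^2) \oplus \H^0(Y_F, \Omega^1)$. First I would analyse how $n_F$ changes as $F$ grows: using \eqref{eq : base change and lambda pi} with ramification index $e$, the pairing $\{\chi, 1 + \pi^{n+1}T\}$ controlling membership in $\fil_n$ transforms in a controlled way, so that $n$ scales (up to the ramification index) and the refined Swan conductor transforms by the explicit formula $\rsw \mapsto (\phi_*\alpha, e\,\phi_*\beta)$ on the two components. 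The point is that after a further \emph{unramified} extension the Swan conductor does not degenerate to zero (base change to the residue field extension $k(Y)'/k(Y)$ is injective on differential forms), so we can arrange a finite extension $L_0/K$ where the refined Swan conductor of $\A$ is nonzero and stays nonzero over all further extensions of $L_0$; replacing $K$ by $L_0$ we may assume $\A \in \fil_n(X) \setminus \fil_{n-1}(X)$ with $\rsw_{n,\pi}(\A) = (\alpha,\beta) \neq 0$.

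\textbf{Step 2.} Now apply the hypothesis on $k'/k$: by assumption, for the nonzero pair $(\alpha,\beta) \in \H^0(Y,\Omega^2) \oplus \H^0(Y,\Omega^1)$ there is a point $P \in Y(k')$ with $(\alpha_P, \beta_P) \neq 0$. Base-changing to the corresponding unramified extension $K'/K$, the model $\mathcal X_{\Os_{K'}}$ has special fibre $Y_{k'}$, and the refined Swan conductor of $\A_{K'}$ is (by \eqref{eq : base change and lambda pi} with $e = 1$) just the pullback of $(\alpha,\beta)$ to $Y_{k'}$, which is nonzero at $P$. Then Theorem~\ref{thm : summaryBN}(2) applied over $K'$ gives that $\ev_\A^{K'}\colon X(K') \to \Br(K')[p]$ is surjective. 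For a general finite extension $L/K'$, one either re-runs the same argument (the Swan conductor of $\A_L$ is still nonzero at a point of $Y_L$ lying over $P$, by the unramified base change formula if $L/K'$ is unramified, or by the scaling behaviour in general combined with the fact that nonvanishing at a point is preserved under field extension of the residue field), or more cleanly observes that surjectivity of $\ev_\A^{K'}$ onto $\Br(K')[p] \cong \Z/p$ together with the compatibility $\ev_\A^L = \mathrm{res}_{L/K'} \circ \ev_\A^{K'}$ on the subset $X(K') \subseteq X(L)$ and the fact that $\mathrm{res}_{L/K'}\colon \Br(K')[p] \to \Br(L)[p]$ is an isomorphism of $\Z/p$'s forces $\ev_\A^L$ to be surjective as well.

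\textbf{Main obstacle.} The delicate point is Step~1: making sure that after passing to a finite extension the refined Swan conductor is \emph{simultaneously} nonzero and stable, i.e.\ that it does not keep jumping to higher filtration levels and then vanishing. One must use the base-change formula \eqref{eq : base change and lambda pi} carefully to track both the level $n$ (which grows with the ramification index) and the two components of $\rsw_{n,\pi}$, keeping in mind the asymmetry: the $\H^0(Y,\Omega^1)$-component picks up a factor of the ramification index $e$, which could kill it if $p \mid e$, so one should route through unramified extensions wherever possible and only use ramified ones in a way that preserves nonvanishing of the $\Omega^2$-component. This bookkeeping — essentially a finiteness/stabilisation argument for the pair $(n_F, \rsw_{n_F})$ under the directed system of finite extensions — is the technical heart; once it is in place, the geometric input about points on $Y$ and Theorem~\ref{thm : summaryBN}(2) finish the proof immediately.
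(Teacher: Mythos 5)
Your proposal identifies the right ingredients (refined Swan conductors, the base-change formula \eqref{eq : base change and lambda pi}, Theorem~\ref{thm : summaryBN}(2)) but has two genuine gaps, one of which you flag yourself without closing, and one which is an actual error.

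\textbf{The missing descent step.} Your Step~1 tries to stabilise the filtration level $n$ and the value of $\rsw_{n,\pi}(\A)$ by passing to a suitable $L_0/K$ and then ``replacing $K$ by $L_0$.'' This has a problem that goes beyond bookkeeping: the hypothesis on $k'/k$ in the Proposition is a statement about sections of $\Omega^2\oplus\Omega^1$ on $Y = Y_k$, and if you replace $K$ by $L_0$ then $Y$ becomes $Y_{k_{L_0}}$ and the hypothesis no longer directly applies. More importantly, you explicitly acknowledge that the simultaneous control of the level and both $\rsw$-components under extensions with $p\mid e$ is ``the technical heart,'' but you never carry it out. The paper's proof entirely sidesteps this stabilisation issue by a different mechanism: for each $L/K'$ it works directly with $\A_L$, notes that $\A_L\notin\fil_0(X_L)$ gives some $n\geq 1$ with $\rsw_{n,\pi_L}(\A_L)\neq 0$, and then invokes Lemma~\ref{lem : rswgaloisaction}(2), which says that because $\A$ is Galois-invariant (it comes from $\Br(X)[p]$), the pair $\rsw_{n,\pi_L}(\A_L)$ equals $\bar c^{\,n}(\alpha_0,\beta_0)$ for a unit $\bar c\in k_L^\times$ and $(\alpha_0,\beta_0)\in\H^0(Y,\Omega^2)\oplus\H^0(Y,\Omega^1)$, i.e.\ forms defined over the \emph{original} residue field $k$. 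Since $k'\subseteq k_L$ and the unit factor $\bar c^{\,n}$ does not affect vanishing at a point, the hypothesis on $k'$ immediately produces $P\in Y(k')\subseteq Y(k_L)$ where the Swan conductor is nonzero. This Galois-descent of the Swan conductor is the idea you are missing, and it eliminates the need for any stabilisation argument.

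\textbf{The restriction map argument is wrong.} The ``more cleanly observes'' alternative at the end of your Step~2 fails: the restriction map $\mathrm{res}_{L/K'}\colon\Br(K')[p]\to\Br(L)[p]$ is not in general an isomorphism. Under the invariant maps it is multiplication by $[L:K']$, so whenever $p\mid[L:K']$ it is the zero map on $p$-torsion. Thus surjectivity of $\ev_\A^{K'}$ onto $\Br(K')[p]$ tells you nothing about $\ev_\A^L$ via the composite $\mathrm{res}_{L/K'}\circ\ev_\A^{K'}$ in the case $p\mid[L:K']$, which is precisely the ramified case you were trying to avoid. One really does have to argue directly over $L$, which is why Lemma~\ref{lem : rswgaloisaction}(2) is load-bearing: it makes the argument uniform in $L$.
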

	 \begin{proof}
Since $\A_L \notin \fil_0\Br(X_L)[p]$,  there exists $n\geq 1$ such that $\A_L\in \fil_n\Br(X_L)[p]\setminus \fil_{n-1}\Br(X_L)[p]$. Therefore, $\rsw_{n,\pi_L}(\mathcal A_L)=(\alpha,\beta)\neq (0,0)$, where $(\alpha,\beta)\in \H^0(Y_{k_L},\Omega^2)\oplus \H^0(Y_{k_L},\Omega^1)$. Since $k_L/k$ 
is Galois, Lemma~\ref{lem : vanishing of sections} below shows that there exists $P\in Y(k_L)$ such that $(\alpha_P,\beta_P)\neq (0,0)$. Now the result follows from Theorem \ref{thm : summaryBN}(2).
\end{proof}
\begin{lemma}\label{lem : vanishing of sections}
Let $Y$ be a smooth proper variety over a  field $k$ and $i\in \mathbb Z_{\geq 0}$. Let $\ell/k$ be a finite Galois extension. Assume that for every non-zero $\alpha\in \H^0(Y,\Omega^i)$ there exists a point $P\in Y(\ell)$ such that $\alpha_P\neq 0$. Then, for every non-zero $\beta\in \H^0(Y_{\ell},\Omega^i)$, there exists a point $P\in Y(\ell)$ such that $\beta_P\neq 0$.
\end{lemma}
\proof
Let $\alpha_1,\dots,\alpha_n$ be a $k$-basis of $\H^0(Y,\Omega^i)$.
By flat base change, any $\beta\in \H^0(Y_{\ell},\Omega^i)$ can be written as
\[\beta=\sum_{j=1}^nb_j\alpha_j\]
for some $b_j\in\ell$. Suppose that $\beta_P=0$ for all $P\in Y(\ell)$. For $\lambda\in\ell$ and $\sigma\in\Gal(\ell/k)$, 
\[\sigma_*(\lambda\beta)=\sum_{j=1}^n\sigma(\lambda b_j)\alpha_j\]
and $\sigma_*(\lambda\beta)_P=0$ for all $P\in Y(\ell)$. Hence,
\[\sum_{j=1}^n\Tr_{\ell/k}(\lambda b_j)\alpha_j\in \H^0(Y,\Omega^i)\]
vanishes at all $P\in Y(\ell)$. By our assumption, this implies that $\Tr_{\ell/k}(\lambda b_j)=0$ for all $j$. Since $\lambda\in \ell$ was arbitrary and $\ell/k$ is separable, this implies that $b_j=0$ for all $j$, by non-degeneracy of the trace form. 
\endproof
\subsection{$p$-adic vanishing cycles}
    To be able to reinterpret Theorem \ref{thm : Brauerandhenselisation} in terms of prismatic cohomology, we need first to reinterpret it in terms of the $p$-adic vanishing cycles spectral sequence, following \cite[Proof of Theorem C]{BNwild}. Let $j:X\rightarrow \mathcal X$ be the natural open immersion. 
Choosing an isomorphism between $\mu_p$ and $\Z/p$ over $\bar{K}$, we may and do replace $\mu_p$ with $\Z/p$ in the statement of Theorem \ref{thm : Brauerandhenselisation}. 
Recall that there is a spectral sequence
$$E^{a,b}_{2}:=\H^a(\mathcal X_{\mathcal O_{\overline K}},R^bj_*\Z/p)\Rightarrow \H^{a+b}(X_{\overline K},\Z/p).$$
 We consider the the edge map
$$\H^n(X_{\overline K},\Z/p)\rightarrow  \H^0(\mathcal X_{\mathcal O_{\overline K}},R^nj_*\Z/p)$$
and its variant over $\Ocp$
$$\H^n(X_{\mathbb C_p},\Z/p)\rightarrow  \H^0(\mathcal X_{\Ocp},R^nj_*\Z/p).$$

\begin{corollary}\label{cor : Brauervanishing}
One has an equality
    $$\Ker(\H^n(X_{\overline K},\Z/p)\rightarrow \H^n(\overline K(X)^{\sh},\Z/p))=\Ker(\H^n(X_{\mathbb C_p},\Z/p)\rightarrow  \H^0(\mathcal X_{\Ocp},R^nj_*\Z/p)),$$
    and a group isomorphism  $$\Br(X_{\bar{K}})[p]/\Br(X_{\bar{K}})[p]^{\gb}=\Image(\H^2(X_{\mathbb C_p},\Z/p)\rightarrow  \H^0(\mathcal X_{\Ocp},R^2j_*\Z/p)).$$
\end{corollary}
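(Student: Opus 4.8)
Both statements will be deduced from Theorem~\ref{thm : Brauerandhenselisation} by reinterpreting the restriction map to $\overline K(X)^{\sh}$ through the $p$-adic vanishing cycles spectral sequence, as in \cite[Proof of Theorem~C]{BNwild}. Let $\eta$ denote the generic point of $Y_{\overline k}$ and $\bar\eta$ a geometric point above it. The key claim is that, after the invariance isomorphism $\H^n(X_{\overline K},\Z/p)\xrightarrow{\sim}\H^n(X_{\mathbb C_p},\Z/p)$ (invariance of \'etale cohomology with finite coefficients under extension of algebraically closed base fields), the restriction map $\H^n(X_{\overline K},\Z/p)\to\H^n(\overline K(X)^{\sh},\Z/p)$ factors as the edge map $\H^n(X_{\mathbb C_p},\Z/p)\to\H^0(\mathcal X_{\Ocp},R^nj_*\Z/p)$ followed by an \emph{injective} map $\H^0(\mathcal X_{\Ocp},R^nj_*\Z/p)\hookrightarrow\H^n(\overline K(X)^{\sh},\Z/p)$.

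To establish this, I would first observe that, since $\mathcal O_{\overline K}$ is already strictly henselian with residue field $\overline k$ and $\Ocp$ is its $p$-adic completion, the $p$-adic vanishing-cycle sheaves on $Y_{\overline k}$ (and their cohomology) do not change if one works over $\Ocp$ rather than over $\mathcal O_{\overline K}$; combined with proper base change this gives $\H^0(\mathcal X_{\Ocp},R^nj_*\Z/p)=\H^0(\mathcal X_{\mathcal O_{\overline K}},R^nj_*\Z/p)$, compatibly with the edge maps. Working over $\mathcal O_{\overline K}$: for $n\geq 1$ the sheaf $R^nj_*\Z/p$ is supported on $Y_{\overline k}$ (since $j$ is an isomorphism over the generic fibre), so $\H^0(\mathcal X_{\mathcal O_{\overline K}},R^nj_*\Z/p)=\H^0(Y_{\overline k},(R^nj_*\Z/p)|_{Y_{\overline k}})$, and the stalk of $R^nj_*\Z/p$ at $\bar\eta$ computes $\H^n$ of the fraction field of $\mathcal O^{\sh}_{\mathcal X_{\mathcal O_{\overline K}},\bar\eta}$. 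As $\mathcal X$ is smooth over $\mathcal O_K$, this local ring is exactly the valuation ring of the $p$-adic valuation on $\overline K(X)$, so its strict henselisation has fraction field $\overline K(X)^{\sh}$ and $(R^nj_*\Z/p)_{\bar\eta}\cong\H^n(\overline K(X)^{\sh},\Z/p)$; the standard compatibility of an edge map with restriction to the spectrum of a local ring identifies edge-map-followed-by-specialisation with the restriction to $\overline K(X)^{\sh}$.

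The remaining, and main, point is the injectivity of the specialisation map $\H^0(Y_{\overline k},(R^nj_*\Z/p)|_{Y_{\overline k}})\hookrightarrow(R^nj_*\Z/p)_{\bar\eta}$, equivalently that this sheaf on the integral scheme $Y_{\overline k}$ has no nonzero global section supported on a proper closed subscheme. This is where the Bloch--Kato description of the $p$-adic vanishing-cycle sheaves \cite{BKpadic} (already exploited in \cite{BNwild}) is used: over $\overline K$ one may twist $\Z/p$ to $\Z/p(n)$, and then $(R^nj_*\Z/p(n))|_{Y_{\overline k}}$ admits a finite filtration whose graded pieces are subsheaves of logarithmic de Rham--Witt sheaves and of sheaves of K\"ahler differentials on the smooth $Y_{\overline k}$; each such piece injects into its generic stalk, and a d\'evissage gives the claim for $R^nj_*\Z/p$.

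Granting the above, the corollary follows formally. Since the map $\H^0(\mathcal X_{\Ocp},R^nj_*\Z/p)\hookrightarrow\H^n(\overline K(X)^{\sh},\Z/p)$ is injective, postcomposing the edge map with it does not change its kernel, so $\Ker(\H^n(X_{\mathbb C_p},\Z/p)\to\H^0(\mathcal X_{\Ocp},R^nj_*\Z/p))$ corresponds, under the invariance isomorphism, to $\Ker(\H^n(X_{\overline K},\Z/p)\to\H^n(\overline K(X)^{\sh},\Z/p))$, which is the first assertion. For $n=2$, the same injection identifies $\Image(\H^2(X_{\mathbb C_p},\Z/p)\to\H^0(\mathcal X_{\Ocp},R^2j_*\Z/p))$ with $\Image(\H^2(X_{\overline K},\Z/p)\to\H^2(\overline K(X)^{\sh},\Z/p))$, which equals $\Br(X_{\overline K})[p]/\Br(X_{\overline K})^{\gb}[p]$ by Theorem~\ref{thm : Brauerandhenselisation}; this is the second assertion. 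I expect the injectivity statement — relying on the structure theory of $p$-adic vanishing cycle sheaves — to be the main obstacle; the invariance inputs and the diagram chases are routine.
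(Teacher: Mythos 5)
Your proposal is correct and follows essentially the same approach as the paper. The paper obtains the key injectivity — that global sections of $R^nj_*\Z/p$ on $\mathcal X_{\mathcal O_{\overline K}}$ inject into the ($\Gamma$-invariants of the) stalk at the generic point of the special fibre — by simply citing \cite[Lemma 3.4]{BNwild}, whereas you outline how to re-derive that lemma from the Bloch--Kato structure theory of $p$-adic vanishing cycles; that is exactly the input behind the cited lemma, so the two arguments coincide. The paper organizes the comparison through the Hochschild--Serre spectral sequence for $\overline K(X)^{\mathrm h}\subseteq\overline K(X)^{\sh}$ and a commutative diagram of edge maps, while you go directly through the stalk identification $(R^nj_*\Z/p)_{\bar\eta}\cong\H^n(\overline K(X)^{\sh},\Z/p)$ and the compatibility of the edge map with specialisation — this is a presentational difference only, not a mathematical one.
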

\proof
The second part follows from the first part and Theorem \ref{thm : Brauerandhenselisation}. For the first part, write $\overline K(X)^{\mathrm h}$ for the henselisation of $\overline K(X)$, so that $\overline K(X)^{\sh}$ is the maximal unramified extension of $\overline K(X)^{\mathrm h}$. Let $\Gamma:=\mathrm{Gal}(\overline K(X)^{\sh}/\overline K(X)^{\mathrm h})$. 
Consider the Hochschild--Serre spectral sequence for $\overline K(X)^{\mathrm h}\subseteq \overline K(X)^{\sh},$
$$E^{a,b}_{2}:=\H^a(\Gamma,\H^b(\overline K(X)^{\sh},\Z/p))\Rightarrow \H^{a+b}(\overline K(X)^{\mathrm h},\Z/p).$$

The edge maps
$$\H^n(X_{\overline K},\Z/p)\rightarrow \H^0(\mathcal X_{\mathcal O_{\overline K}},R^nj_*\Z/p)\quad \text{and}\quad \H^{n}(\overline K(X)^{\mathrm h},\Z/p)\rightarrow \H^0(\Gamma,\H^n(\overline K(X)^{\sh},\Z/p))$$
fit into a commutative diagram
\begin{center}
	\begin{tikzcd}
\H^n(X_{\overline K},\Z/p)\arrow{d}\arrow{r}& \H^{n}(\overline K(X)^{\mathrm h},\Z/p)\arrow{d}\\
\H^0(\mathcal X_{\mathcal O_{\overline K}},R^nj_*\Z/p)\arrow{r}{g}&\H^0(\Gamma,\H^n(\overline K(X)^{\sh},\Z/p))\arrow[hook]{r}&\H^n(\overline K(X)^{\sh},\Z/p),
	\end{tikzcd}
\end{center}
in which the horizontal arrows in the square are the natural restriction maps and the bottom right map is the natural inclusion. 
By \cite[Lemma 3.4]{BNwild}, the map $g$ is injective, hence 
$$\Ker(\H^n(X_{\overline K},\Z/p)\rightarrow \H^n(\overline K(X)^{\sh},\Z/p))=\Ker(\H^n(X_{\overline K},\Z/p)\rightarrow  \H^0(\mathcal X_{\mathcal O_{\overline K}},R^nj_*\Z/p)).$$
Since \'etale cohomology is invariant under algebraically closed field extensions, one has
$$\Ker(\H^n(X_{\overline K},\Z/p)\rightarrow  \H^0(\mathcal X_{\mathcal O_{\overline K}},R^nj_*\Z/p))\simeq \Ker(\H^n(X_{\mathbb C_p},\Z/p)\rightarrow  \H^0(\mathcal X_{\Ocp},R^nj_*\Z/p)),$$
and this concludes the proof.\endproof
\section{Prismatic cohomology and $p$-adic vanishing cycles}\label{sec : prismatic}
In this section we prove Theorem \ref{thm : prismaticinterpretationkernel}. We start in Section \ref{subsec : review} with some background on $\Ocpflat$, $\Ainf$ and prismatic cohomology from \cite{BS}. With this in hand, in Section \ref{subsec : prismaticcomputation} we perform some computations in the mod-$p$ version of prismatic cohomology that we will use in the rest of the paper. Finally, in Section \ref{subsec : prismaticvanishing} we prove Theorem \ref{thm : prismaticinterpretationkernel}. 

In this section $k$ denotes the residue field of $\Ocp$. Let $\mathcal X$ be a smooth proper scheme over $\Ocp$ and $\widehat{\mathcal X}$ its formal $p$-adic completion. We write $Y$ for $\mathcal X_k$ and $X$ for $\mathcal X_{\mathbb C_p}$.
Let 
$$h^{a,b}:=\Dim_{\mathbb C_p}(\H^b(X,\Omega^a_{X/\mathbb C_p}))\quad \text{and} \quad h^n:=\Dim_{\mathbb C_p}(\H^n_{\dr}(X/\mathbb C_p))=\Dim_{\mathbb Q_p}(\H^n_{\et}(X,\Qp)).$$

\subsection{Recollection on prismatic cohomology}\label{subsec : review}
\subsubsection{Tilt}
A reference for what follows is  \cite[Sections 2 and 3]{Bhattperfectoid}.
Let $\mathcal{O}_{\mathbb C^{\flat}_p}$ be the tilt of $\mathcal{O}_{\mathbb C_p}$. Recall that 
$$\mathcal{O}_{\mathbb C^{\flat}_p}\simeq \varprojlim_{\varphi_{\Ocp/p}}\Ocp/p\simeq \varprojlim_{x\mapsto x^p} \Ocp,$$
where the first isomorphism is as rings and the second as multiplicative monoids, and the first limit is done along the power of the absolute Frobenius $\varphi_{\Ocp/p}:\Ocp/p\rightarrow \Ocp/p$ and the second along the $p^{th}$-power map $(-)^p:\Ocp\rightarrow \Ocp$.

The choice of a compatible system $\{\zeta_{p^{n}}\}$ of primitive $p$-power roots of unity in $\mathcal{O}_{\mathbb C_p}$ gives, via the isomorphism $\mathcal{O}_{\mathbb C^{\flat}_p}\simeq  \varprojlim_{x\mapsto x^p} \Ocp$, an element 
$$\epsilon:=(1,\zeta_p,\dots, \zeta_{p^n},\dots )\in \mathcal{O}_{\mathbb C^{\flat}_p}.$$
The $p$-adic valuation of $\Ocp$ induces, by precomposing with the projection on the first component,  a rank $1$ valuation $v$ on $\Ocpflat$, such that the element 
\begin{equation}\label{eq : defofd}
    d:=\sum^{p-1}_{i=0}\epsilon^{i/p},
\end{equation}
has $v(d)=1$. Then $v$ makes $\Ocpflat$ a complete valuation ring, whose maximal ideal $\mathfrak m^{\flat}$ is generated by $d^{\alpha}$ for $\alpha\in \Q_{> 0}$. 

\subsubsection{$\Ainf$}
A reference for what follows is \cite[Section 3, Example 3.16]{BMS1}. 
Set $\Ainf:=W(\mathcal{O}_{\mathbb C^{\flat}_p})$ and write $\varphi_{\Ainf}:\Ainf\rightarrow \Ainf$ for the Frobenius automorphism. If it is clear from the context, we avoid writing the subscript $_{\Ainf}$. 
If $x\in \mathcal{O}_{\mathbb C^{\flat}_p}$, we denote by $[x]$ its Teichm\"{u}ller lift. Recall that there is a natural map $\theta:\Ainf\rightarrow \mathcal O_{\mathbb C_p}$, whose kernel is principal, generated by  
$$\xi:=\sum^{p-1}_{i=0}[\epsilon^{i/p}]$$
whose reduction modulo $p$ is $d$.

  Since $\Ker(\theta)=(\xi)$ is principal, for every $n\geq 1$, sending $1$ to $1\otimes\xi^n$ and $1\otimes d^n$ induces natural isomorphisms
  $$\Ainf\rightarrow \Ainf \otimes (\xi^n) \quad \text{and}\quad \Ocpflat\rightarrow \Ocpflat \otimes (d^n),$$
  respectively.
  If $M$ is an $\Ainf$-module (resp.\ $\Ocpflat$-module), we will identify $M\otimes (\xi^n)$ (resp.\ $M\otimes (d^n)$) with $M$ along this isomorphism, and if $f:N\rightarrow M\otimes (\xi^n)$ (resp.\ $f:N\rightarrow M\otimes (d^n)$) is a morphism of 
$\Ainf$-modules (resp.\ $\Ocpflat$-modules), we will 
write $f_n:N\rightarrow M$ for the induced morphism. 

We will work mainly with the following commutative diagram
\begin{center}
	\begin{tikzcd}
&  \mathcal O_{\mathbb C_p}\\
\Ainf\arrow{d}{\beta}\arrow{ru}{\widetilde{\theta}}\arrow{r}{\varphi}  &\Ainf\arrow{d}{\beta}\arrow{u}{\theta}\\
W(k)\arrow{r}{\varphi} &W(k),
	\end{tikzcd}
\end{center}
where $\beta$ is induced by the projection $\mathcal O_{\mathbb C_p^{\flat}}\rightarrow k$ via the Witt vectors functoriality and $\widetilde{\theta}:=\theta\circ\varphi^{-1}$, and its modulo $p$  version
\begin{center}
	\begin{tikzcd}
&  \mathcal O_{\mathbb C_p}/p\simeq\mathcal O_{\mathbb C^{\flat}_p}/d\\
\mathcal O_{\mathbb C^{\flat}_p}\arrow{d}{\beta}\arrow{ru}{\widetilde{\theta}}\arrow{r}{\varphi}  &\mathcal O_{\mathbb C^{\flat}_p}\arrow{d}{\beta}\arrow{u}{\theta}\\
k\arrow{r}{\varphi} &k.
	\end{tikzcd}
\end{center}
\subsubsection{Some sheaves on the prismatic sites }
A reference for what follows is \cite[Section 4]{BS}. Consider the prism $\Delta:=(\Ainf,(\xi))$. 
Let $\Sh_{\Delta}(\mathcal X)$ 
be the category of sheaves on the prismatic site of $\widehat{\mathcal X}$ (with the flat topology, see \cite[Section 4.1]{BS}). There are two interesting sheaves of $\Ainf$-modules: 
\begin{enumerate}
	\item $\mathcal O_{\mathcal X/\Delta}$, which sends $(B,J)$ to $B$
	\item $\overline{\mathcal O}_{\mathcal X/\Delta}$, which sends $(B,J)$ to $B/J$. 
\end{enumerate} 
Let $\Sh_{\et}(\widehat{\mathcal X})$ be the category of sheaves on the \'etale site of $\widehat{\mathcal X}$.  
There is a natural functor (see \cite[Construction 4.4]{BS})
$$v_*:\Sh_{\et}(\widehat{\mathcal X})\rightarrow \Sh_{\Delta}(\mathcal X).$$
Let
$$\Delta:=Rv_*\mathcal O_{\mathcal X/\Delta}\quad \text{and}\quad \overline{\Delta}:=Rv_*\overline{\mathcal O}_{\mathcal X/\Delta}.$$
These are complexes of sheaves of $\Ainf$ and $\Ocp$-modules, respectively, such that
$$\Delta\otimes^L \Ocp\simeq \overline{\Delta}$$
and $\Delta$ is endowed with a natural $\varphi_{\Ainf}$-linear Frobenius $\varphi: \Delta\rightarrow \Delta.$

Set $$\Delta^{(1)}:=\varphi_{\Ainf}^*\Delta,$$
so that the $\varphi_{\Ainf}$-linear Frobenius $\varphi: \Delta\rightarrow \Delta$ induces an $A_{\inf}$-linear map
$\phi:\Delta^{(1)}\rightarrow \Delta$ and a $\varphi_{\Ainf}$-linear  map $\varphi^{(1)}:\Delta^{(1)}\rightarrow \Delta^{(1)}$, fitting into a commutative diagram
\begin{center}
	\begin{tikzcd}
\Delta\arrow{r}{\varphi} \arrow[swap]{d}{s}&\Delta\arrow{d}{s}\\
\Delta^{(1)} \arrow{ru}{\phi}\arrow[swap]{r}{\varphi^{(1)}}& \Delta^{(1)} 
	\end{tikzcd}
\end{center}
where $s$ is the natural inclusion.

By \cite[Theorems 15.2 and 15.3]{BS} (see also \cite[
Construction 5.26]{logprism2} for a global statement in a more general situation)
there exists a decreasing filtration, called the  Nygaard filtration, 
$$\iota:N^{\geq i}\rightarrow \Delta^{(1)}$$ 
and an $\Ainf$-linear map $\phi_i:N^{\geq i}\rightarrow \Delta$, called the $i^{th}$-divided Frobenius, making the following diagram commutative
\begin{center}
	\begin{tikzcd}
N^{\geq i}\arrow{r}{\phi_i}\arrow[bend left]{rr}{\phi \circ \iota }& \Delta\arrow{r}{\xi^i}&  \Delta.
	\end{tikzcd}
\end{center}
We write $\varphi^{(1)}_i:N^{\geq i}\rightarrow \Delta^{(1)}$ for the $\varphi_{\Ainf}$-linear compositum $s\circ\phi_i$.
\subsubsection{Comparison isomorphisms}
The following theorem summarises the main results we need from \cite{BS}. 
\begin{theorem}\label{thm : BS}
	There are exact triangles in $D^b(\widehat{\mathcal X},\Ocpflat)$ 	
    \begin{equation}\label{eq : hodgeTate}
	\tau_{\leq i-1}(\overline{\Delta}/p)\rightarrow \tau_{\leq i} (\overline{\Delta}/p)\rightarrow \Omega^i_{\mathcal X/p/(\Ocp/p)} 
\end{equation}  
	\begin{equation}\label{eq : grnygard}
N^{\geq i+1}/p\rightarrow N^{\geq i}/p\xrightarrow{\phi_i}\tau_{\leq i}(\overline\Delta/p)
\end{equation}  
and an isomorphism
\begin{equation}\label{eq : derhamcomparison}
	\varphi_{\Ainf}^* \overline{\Delta}/p \simeq \Omega^{\bullet}_{\mathcal \mathcal X/p/(\Ocp/p)}.
\end{equation}
\end{theorem}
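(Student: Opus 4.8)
The plan is to deduce \eqref{eq : hodgeTate}, \eqref{eq : grnygard} and \eqref{eq : derhamcomparison} directly from the corresponding \emph{integral} statements of \cite{BS} (globalised as in \cite{logprism2}) by reducing modulo $p$; the only substantive points are to check that the derived reduction $(-)\otimes^L_{\Ocp}\Ocp/p$ commutes with the relevant truncation and Nygaard filtrations, and to absorb the Breuil--Kisin twists into the identifications $\Ocpflat\otimes (d^n)\simeq\Ocpflat$ fixed above (over the valuation ring $\Ocpflat$ such a twist is a free rank-one module, which is why it disappears from the statement, even though it of course affects the Frobenius structure).

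For \eqref{eq : hodgeTate} I would recall the Hodge--Tate comparison, which identifies $\mathcal H^i(\overline\Delta)$ with $\Omega^i_{\widehat{\mathcal X}/\Ocp}$ up to a Breuil--Kisin twist, so that the conjugate filtration on $\overline\Delta$ is the canonical truncation filtration $\tau_{\leq\bullet}$. Since $\mathcal X\to\Spec\Ocp$ is smooth, the sheaves $\Omega^i_{\widehat{\mathcal X}/\Ocp}$ are locally free, hence flat over $\Ocp$; a universal-coefficients argument then shows that $\tau_{>i}(\overline\Delta)\otimes^L\Ocp/p$ still has cohomology in degrees $>i$, whence $\tau_{\leq i}(\overline\Delta/p)=(\tau_{\leq i}\overline\Delta)/p$ and $\mathcal H^i(\overline\Delta/p)\simeq\Omega^i_{\mathcal X/p/(\Ocp/p)}$ (Kähler differentials of a smooth morphism commute with the base change $\Ocp\to\Ocp/p$). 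Applying the canonical triangle $\tau_{\leq i-1}\to\tau_{\leq i}\to\mathcal H^i[-i]$ to $\overline\Delta/p$ yields \eqref{eq : hodgeTate}.

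For \eqref{eq : grnygard} I would invoke the description of the graded pieces of the Nygaard filtration in \cite[Theorems 15.2, 15.3]{BS} (globally, \cite[Construction 5.26]{logprism2}), according to which $\phi_i$ induces an isomorphism $\gr^i_N\Delta^{(1)}\xrightarrow{\ \sim\ }\tau_{\leq i}\overline\Delta$ up to twist; combined with the triangle $N^{\geq i+1}\to N^{\geq i}\to\gr^i_N\Delta^{(1)}$ this gives a triangle $N^{\geq i+1}\to N^{\geq i}\xrightarrow{\phi_i}\tau_{\leq i}\overline\Delta$ (twisted), which one reduces mod $p$ as before, the flatness of the conjugate graded pieces of $\overline\Delta$ ensuring that each $N^{\geq i}/p$ is the expected successive extension. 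Finally \eqref{eq : derhamcomparison} is the mod-$p$ reduction of the de Rham comparison isomorphism $\varphi^*\overline\Delta\simeq\widehat\Omega^\bullet_{\widehat{\mathcal X}/\Ocp}$ of \cite{BS}, using once more that the de Rham complex of a smooth scheme is compatible with base change along $\Ocp\to\Ocp/p$; this is consistent with the two commutative squares relating $\Ainf$, $\Ocp$ and $W(k)$ (and their mod-$p$ versions) displayed above, through which $\varphi^*\overline\Delta/p$ is computed.

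The hard part is not any single argument but the bookkeeping: tracking the Breuil--Kisin/Tate twists through reduction mod $p$ so as to recognise them as the invertible $\Ocpflat$-modules $(d^n)$ we have trivialised, and confirming that $(-)\otimes^L_{\Ocp}\Ocp/p$ passes through the truncation and Nygaard filtrations --- for which flatness of $\Omega^i_{\widehat{\mathcal X}/\Ocp}$ over $\Ocp$ is the one input one genuinely has to supply, everything else being imported from \cite{BS} and \cite{logprism2}.
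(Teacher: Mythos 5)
Your proposal is correct and takes essentially the same route as the paper: reduce the integral Hodge--Tate comparison \cite[Theorem 4.11]{BS}, the Nygaard graded-pieces isomorphism \cite[Theorem 15.3]{BS} (globalised in \cite{logprism2}), and the de Rham comparison \cite[Theorem 6.4]{BS} modulo $p$, with the key point being that truncation commutes with $(-)\otimes^L\mathbb Z/p$ because the Hodge sheaves $\Omega^j_{\mathcal X/\Ocp}$ are locally free. The paper packages this last commutation into Lemma~\ref{lem : spectraldegeneration}(1); your universal-coefficients argument for $\tau_{>i}(\overline\Delta)\otimes^L\Ocp/p$ proves the same thing inline.
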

\proof
Since $\mathcal X$ is smooth, $\Omega^j_{\mathcal X/\mathcal O_{\mathbb C_p}}$ is locally free. Hence 
(\ref{eq : hodgeTate}) follows from applying $-\otimes^L\Z/p$ to the Hodge--Tate comparison from \cite[Theorem 4.11]{BS}, thanks to Lemma \ref{lem : spectraldegeneration}(1) below. Similarly, (\ref{eq : grnygard}) follows from \cite[Theorem 15.3]{BS} (see also \cite[
Corollary 5.27]{logprism2} for a global statement) after applying $-\otimes^L \Z/p$, thanks to Lemma \ref{lem : spectraldegeneration}(1) below. Finally, (\ref{eq : derhamcomparison}) follows from the de Rham comparison \cite[Theorem 6.4]{BS} after applying $-\otimes^L \Z/p$.
\endproof
\begin{lemma}\label{lem : spectraldegeneration}
	Let $R$ be a ring, let $\mathcal F^{\bullet}\in D^{b}(\widehat{\mathcal X},R)$, and assume that $\mathcal H^n(\mathcal F^\bullet)$ is finite locally free over $R$ for every $n$. 
	\begin{enumerate}
		\item For every ring morphism $R\rightarrow S$, there is a natural isomorphism $(\tau_{\leq i}\mathcal F^{\bullet})\otimes^L S\simeq \tau_{\leq i}(\mathcal F^{\bullet}\otimes^L S)$.
		\item Assume further that $R$ is a coherent complete local ring  with residue field $k$ and that $\H^{i}(\mathcal H^n(\mathcal F^\bullet))$ and $\H^{i}(\mathcal F^\bullet)$ are finite free for every $i$ and $n$. 
Suppose that the conjugate spectral sequence 
		$$_kE_{2}^{a,b}:=\H^{a}(\mathcal H^b(\mathcal F^{\bullet}\otimes^Lk))\Rightarrow \H^{a+b}(\mathcal F^{\bullet}\otimes ^Lk)$$ 	
		for $\mathcal F^{\bullet}\otimes^L k$ degenerates at the second page. Then the conjugate spectral sequence 
		$$ E_{2}^{a,b}:=\H^{a}(\mathcal H^b(\mathcal F^{\bullet}))\Rightarrow \H^{a+b}(\mathcal F^{\bullet})$$
				for $\mathcal F^{\bullet}$ degenerates at the second page. 
	\end{enumerate}
    
\end{lemma}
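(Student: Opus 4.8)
The plan is to deduce both statements from the collapse of the hyper-Tor and universal-coefficient spectral sequences, with the hypotheses entering only to force these collapses.

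For part (1), consider the hyper-Tor spectral sequence converging to $\mathcal H^n(\mathcal F^\bullet\otimes^L_R S)$ whose $E_2$-terms are the sheaf-Tor of $\mathcal H^q(\mathcal F^\bullet)$ against $S$. Since $\mathcal H^q(\mathcal F^\bullet)$ is finite locally free over $R$, hence $R$-flat, all higher Tor vanish and this degenerates, giving a natural identification $\mathcal H^n(\mathcal F^\bullet\otimes^L_R S)\simeq \mathcal H^n(\mathcal F^\bullet)\otimes_R S$ for all $n$. Applying this to $\tau_{\leq i}\mathcal F^\bullet$ and to $\tau_{>i}\mathcal F^\bullet$, whose cohomology sheaves are among those of $\mathcal F^\bullet$ and hence again finite locally free, shows that $\tau_{\leq i}\mathcal F^\bullet\otimes^L_R S$ has cohomology concentrated in degrees $\leq i$ and $\tau_{>i}\mathcal F^\bullet\otimes^L_R S$ in degrees $>i$. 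Tensoring the canonical triangle $\tau_{\leq i}\mathcal F^\bullet\to\mathcal F^\bullet\to\tau_{>i}\mathcal F^\bullet$ with $S$ and comparing it, via uniqueness of the truncation triangle, with the canonical triangle of $\mathcal F^\bullet\otimes^L_R S$ yields the desired natural isomorphism $(\tau_{\leq i}\mathcal F^\bullet)\otimes^L_R S\simeq \tau_{\leq i}(\mathcal F^\bullet\otimes^L_R S)$; naturality in all the data is immediate from the construction.

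For part (2), first specialise part (1) to $S=k$: this gives $\mathcal H^b(\mathcal F^\bullet\otimes^L_R k)=\mathcal H^b(\mathcal F^\bullet)\otimes_R k$ and, at the level of truncations, a compatibility between the conjugate filtrations computing the two spectral sequences. Combining this with the universal coefficient theorem for $R\Gamma(\widehat{\mathcal X},-)$ — which has no higher Tor contribution because $\H^i(\mathcal H^b(\mathcal F^\bullet))$ and $\H^i(\mathcal F^\bullet)$ are assumed finite free over $R$, so that the relevant $R\Gamma$'s are perfect complexes — one obtains ${}_kE_2^{a,b}=E_2^{a,b}\otimes_R k$ and $\H^n(\mathcal F^\bullet\otimes^L_R k)=\H^n(\mathcal F^\bullet)\otimes_R k$. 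In particular $\dim_k {}_kE_2^{a,b}=\rank_R E_2^{a,b}$ and $\dim_k\H^n(\mathcal F^\bullet\otimes^L_R k)=\rank_R\H^n(\mathcal F^\bullet)$. Now do the bookkeeping: degeneration of the conjugate spectral sequence for $\mathcal F^\bullet\otimes^L_R k$ gives $\rank_R\H^n(\mathcal F^\bullet)=\sum_{a+b=n}\rank_R E_2^{a,b}$ for every $n$, while convergence of the conjugate spectral sequence for $\mathcal F^\bullet$ gives $\rank_R\H^n(\mathcal F^\bullet)=\sum_{a+b=n}\rank_R E_\infty^{a,b}\leq \sum_{a+b=n}\rank_R E_2^{a,b}$, as each page is a subquotient of the previous one and $\mathcal F^\bullet$ is bounded so only finitely many terms are involved. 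Comparing, the inequality is an equality at every spot, so $E_\infty^{a,b}=E_2^{a,b}$, i.e. all differentials $d_r$ with $r\geq 2$ vanish and the conjugate spectral sequence for $\mathcal F^\bullet$ degenerates at the second page. Coherence of $R$ guarantees that every page $E_r$ is a finitely presented $R$-module, so these ranks are well defined and additive along the differentials — in the applications $R$ is $\Ocpflat$ or $\Ainf$, and one may simply pass to $\Frac R$.

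The main obstacle is this last step: making the rank count rigorous over a ring assumed only coherent complete local rather than a field. The two delicate points are (a) that $R\Gamma(\widehat{\mathcal X},-)$ genuinely commutes with $-\otimes^L_R k$ on the complexes involved, which is exactly where the finite-freeness of the hypercohomology groups in the hypothesis is used, and (b) that "rank" is a meaningful invariant that is additive along the differentials of the spectral sequence, which is why coherence (hence finite presentation of all pages) is built into the statement. Part (1), by contrast, is essentially formal once the hyper-Tor spectral sequence is in hand.
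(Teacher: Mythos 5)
Part (1) is correct and uses essentially the same mechanism as the paper: both exploit the flatness of the cohomology sheaves to commute truncation with base change. Your route via the hyper-Tor spectral sequence is a clean alternative to the paper's decreasing induction on the triangle $\tau_{\leq i}\mathcal F^\bullet \to \tau_{\leq i+1}\mathcal F^\bullet\to \mathcal H^{i+1}(\mathcal F^\bullet)[-i-1]$; no objection there.

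Part (2) has a genuine gap. Your strategy hinges on a rank invariant that is additive along the differentials of the spectral sequence, and you propose to obtain it by passing to $\mathrm{Frac}(R)$. This does work when $R$ is a domain: the $E_2$-terms are finite free by hypothesis, hence torsion-free, so a differential that vanishes after localization vanishes on the nose, and one can iterate $E_2=E_3=\cdots$. But the lemma is stated for an arbitrary coherent complete local $R$, and this generality is used: in the proof of Proposition~\ref{prop : basicpropertiesprismatic}(4) the lemma is applied with $R=\mathcal O_{\mathbb C_p^\flat}/d$, which is \emph{not} a domain — indeed every element of its maximal ideal is a zero-divisor (e.g.\ $d^{a}\cdot d^{1-a}=d=0$ for $0<a<1$), so $\mathrm{Frac}(R)=R$ and localizing gains nothing. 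Your closing remark that "in the applications $R$ is $\mathcal O_{\mathbb C_p^\flat}$ or $A_{\mathrm{inf}}$" is therefore factually wrong, and the rank bookkeeping has no meaning in the case that is actually needed. The paper sidesteps this entirely: it shows by induction on $i$ that the edge maps $F^i\to E_2^{i,n-i}$ are surjective, deducing surjectivity from the mod-$k$ degeneration via Nakayama's Lemma, which is available over any local ring; freeness of the successive filtration pieces is then propagated along the way. If you want to rescue your count, you would need an additive length-type invariant tailored to the non-domain $R$'s that actually occur (for $\mathcal O_{\mathbb C_p^\flat}/d$ a $d$-adic length would do), but such an invariant does not exist for a general coherent complete local ring, so the argument cannot prove the lemma as stated.
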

\proof
\begin{enumerate}
\item[]
	\item Assume that $\mathcal F^{\bullet}$ is concentrated in degree $\leq r$. We prove the statement by decreasing induction on $i$, the case $i=r+1$ being clear since both sides identify with  $\mathcal F^{\bullet}\otimes^L S$. Assume now $i<r+1$ and consider the exact triangle
	 $$\tau_{\leq i}\mathcal{F}^{\bullet}\rightarrow \tau_{\leq i+1}\mathcal{F}^{\bullet}\rightarrow \mathcal H^{i+1}(\mathcal{F}^{\bullet}),$$
	 giving an exact triangle
	 	 $$(\tau_{\leq i}\mathcal{F}^{\bullet})\otimes^L S\rightarrow (\tau_{\leq i+1}\mathcal{F}^{\bullet})\otimes^L S\rightarrow (\mathcal H^{i+1}(\mathcal{F}^{\bullet}))\otimes^LS.$$
	 By induction $(\tau_{\leq i+1}\mathcal{F}^{\bullet})\otimes^L S\simeq \tau_{\leq i+1}(\mathcal{F}^{\bullet}\otimes^L S)$. Since $(\tau_{\leq i}\mathcal{F}^{\bullet})\otimes^L S$ is concentrated in degree $\leq i$, we get a natural commutative diagram with exact rows
	 	\begin{center}
	\begin{tikzcd}
(\tau_{\leq i}\mathcal{F}^{\bullet})\otimes^L S\arrow{r}\arrow{d}&  (\tau_{\leq i+1}\mathcal{F}^{\bullet})\otimes^LS\arrow{r} \arrow{d}&  (\mathcal H^{i+1}(\mathcal{F}^{\bullet}))\otimes^L S\arrow{d}\\
\tau_{\leq i}(\mathcal{F}^{\bullet}\otimes^L S)\arrow{r}&  \tau_{\leq i+1}(\mathcal{F}^{\bullet}\otimes^LS) \arrow{r} \arrow{r}&  \mathcal H^{i+1}(\mathcal{F}^{\bullet}\otimes^L S)
	\end{tikzcd}
\end{center}
Since $\mathcal H^{i+1}(\mathcal{F}^{\bullet})$ is flat, the right vertical map is an isomorphism. By the induction hypothesis, the middle vertical map is an isomorphism. This implies the statement. 
\item Observe that by assumption and point (1) the natural map
\begin{equation}\label{eq : compatibility spectral sequence}
	E^{a,b}_{2}\otimes k=\H^{a}(\mathcal H^b(\mathcal F^{\bullet}))\otimes k\simeq \H^{a}(\mathcal H^b(\mathcal F^{\bullet}\otimes^L k))= \text{ }_kE^{a,b}_{2}
\end{equation}
is an isomorphism. 
Let $F^{i}_n$ be the $i^{th}$ piece of the filtration on $\H^n(\mathcal F^{\bullet})$ induced by the spectral sequence. 
We prove by induction on $i\geq 0$ that, for every $n$, $E^{i,n-i}_{2}=E^{i,n-i}_{\infty}$ 
and $F^{i+1}_n$ is finite locally free. 

If $i=0$, the equality $E^{0,n}_{2}=E^{0,n}_{\infty}$ follows from the fact that the edge map $$\H^n(\mathcal F^{\bullet})\rightarrow \H^0(\mathcal H^n(\mathcal F^{\bullet}))$$ 
is surjective, by (\ref{eq : compatibility spectral sequence}), Nakayama's Lemma and the degeneration of the spectral sequence $_kE_{2}^{a,b}$. Then there is an exact sequence
$$0\rightarrow F^1_{n}\rightarrow{} \H^n(\mathcal F^{\bullet})\rightarrow E^{0,n}_{2}\rightarrow 0$$
showing that $F^1_{n}$ is finite locally free, since $E^{0,n}_{2}$ and $\H^n(\mathcal F^{\bullet})$ are.  

Now assume that $i>0$. By the induction hypothesis, $E^{j,n-j}_{2}=E^{j,n-j}_{\infty}$ 
for $j<i$ and every $n$. In particular, all the morphisms with target $E^{i,n-i}_{a}$ vanish for every $a\geq 2$, so that $E^{i,n-i}_{\infty}\subseteq E_2^{i,n-i}$ and there is a map $F^i_n\rightarrow E_2^{i,n-i}$. Again by Nakayama's Lemma, the degeneration of the spectral sequence $_kE_{2}^{a,b}$ and (\ref{eq : compatibility spectral sequence}), this map is surjective. Hence  $E_2^{i,n-i}= E_{\infty}^{i,n-i}$ and there is a short exact sequence 
$$0\rightarrow F^{i+1}_{n}\rightarrow F^{i}_{n} \rightarrow E^{i,n-i}_{2}\rightarrow 0,$$
so that $F^{i+1}_{n}$ is finite and locally free, since $E^{i,n-i}_{2}$ and $F^{i}_n$ are. This concludes the proof. 
\qedhere  \end{enumerate}
\endproof
\subsection{Cohomological computations}\label{subsec : prismaticcomputation}
In this section we carry out the main computations with prismatic cohomology that we will need in the rest of the paper. Assumption \ref{eq : equalityhodgenumber} gives a way to compute the Hodge groups, which we then use to study prismatic cohomology and the cohomology of the Nygaard filtration. 
\subsubsection{Hodge cohomology groups}
\begin{lemma}\label{lem : Hodgegroups}
	Assume that \ref{eq : equalityhodgenumber} holds. Let $R$ be an $\mathcal O_{\mathbb C_p}$-algebra. 
	\begin{enumerate}
		\item The natural map 
		$$\H^i(\mathcal X,\Omega^j_{\mathcal X/\mathcal O_{\mathbb C_p}})\otimes R\rightarrow \H^i(\mathcal X_R,\Omega^j_{\mathcal X_R/R})$$ is an isomorphism and $\H^i(\mathcal X_R,\Omega^j_{\mathcal X_R/R})$ is a free $R$-module of rank $h^{j,i}$.
		\item The Hodge-to-de Rham spectral sequence 
		 $$E_1^{a,b}:=\H^b(\mathcal X_R,\Omega^a_{\mathcal X_R/R})\Rightarrow \H^{a+b}_{\dR}(\mathcal X_R/R)$$
		 degenerates at the first page.
		 \item The natural map 
		$$\H^{n}_{\dR}(\mathcal X/{\mathcal O_{\mathbb C_p}})\otimes R\rightarrow \H^n_{\dR}(\mathcal X_R/R)$$ is an isomorphism and $\H^n_{\dR}(\mathcal X_R/R)$ is a free $R$-module of rank $h^n$.
\item If $R$ is a complete coherent local ring of characteristic $p$ with residue field k, then the conjugate spectral sequence
 $$E_2^{a,b}:=\H^a(\mathcal X_R,\mathcal H^b(\Omega^{\bullet}_{\mathcal X_R/R}))\Rightarrow \H^{a+b}_{\dR}(\mathcal X_R/R)$$
degenerates at the second page and $\H^a(\mathcal X_R,\tau_{\leq n}\Omega^{\bullet}_{\mathcal X_R/R})$
is a locally free $R$-module of rank $\sum^n_{i=0}h^{i,a-i}$.
	\end{enumerate}
\end{lemma}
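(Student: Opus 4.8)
The plan is to reduce parts (1)--(3) to a single \emph{formality} statement over the valuation ring $\Ocp$: that the complexes $R\Gamma(\mathcal X,\Omega^j_{\mathcal X/\Ocp})$ and $R\Gamma_{\dR}(\mathcal X/\Ocp)$ are quasi-isomorphic to $\bigoplus_i\H^i[-i]$ with each $\H^i$ finite free over $\Ocp$ of the expected rank, and then to pass to a general $\Ocp$-algebra $R$ by flat base change; part (4) will be deduced from this together with the Cartier isomorphism in characteristic $p$ and Lemma~\ref{lem : spectraldegeneration}(2). Both complexes above are perfect over $\Ocp$: $\mathcal X$ spreads out to a smooth proper scheme over a finitely generated, hence Noetherian, $\Z$-subalgebra of $\Ocp$, over which higher direct images of vector bundles are perfect, and perfection is preserved by base change to $\Ocp$; we do not track Hodge numbers at this intermediate stage.

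For the formality statement, let $C$ be $R\Gamma(\mathcal X,\Omega^j_{\mathcal X/\Ocp})$ or $R\Gamma_{\dR}(\mathcal X/\Ocp)$. Since $\Ocp$ is local, $C$ has a minimal model, i.e.\ a bounded complex of finite free $\Ocp$-modules whose differentials have entries in the maximal ideal, with $i$-th term of some rank $r_i$, so that $r_i=\Dim_k\H^i(C\otimes^L_{\Ocp}k)$; as the relevant sheaves are $\Ocp$-flat, derived base change identifies $C\otimes^L_{\Ocp}k$ with $R\Gamma(Y,\Omega^j_{Y/k})$, resp.\ $R\Gamma_{\dR}(Y/k)$, and $C\otimes^L_{\Ocp}\mathbb C_p$ with $R\Gamma(X,\Omega^j_{X/\mathbb C_p})$, resp.\ $R\Gamma_{\dR}(X/\mathbb C_p)$. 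In the Hodge case, assumption \ref{eq : equalityhodgenumber} (which by flat base change reads $\Dim_k\H^i(Y,\Omega^j_{Y/k})=h^{j,i}$) gives $r_i=h^{j,i}=\Dim_{\mathbb C_p}\H^i(C\otimes^L_{\Ocp}\mathbb C_p)$ at once. In the de Rham case, the Hodge-to-de Rham spectral sequence over $k$ and then \ref{eq : equalityhodgenumber} give $r_i=\Dim_k\H^i_{\dR}(Y/k)\le\sum_{a+b=i}\Dim_k\H^b(Y,\Omega^a_{Y/k})=\sum_{a+b=i}h^{a,b}=h^i=\Dim_{\mathbb C_p}\H^i_{\dR}(X/\mathbb C_p)$, where the last equality is Hodge-to-de Rham degeneration for the smooth proper $X$ over the characteristic-zero field $\mathbb C_p$, so again $\Dim_{\mathbb C_p}\H^i(C\otimes^L_{\Ocp}\mathbb C_p)=r_i$. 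In either case the differentials of the minimal model become zero after $-\otimes_{\Ocp}\mathbb C_p$, hence are zero because $\Ocp\hookrightarrow\mathbb C_p$; thus $C\simeq\bigoplus_i\H^i(C)[-i]$ with $\H^i(C)$ finite free of rank $h^{j,i}$, resp.\ $h^i$.

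Applying $-\otimes^L_{\Ocp}R$ to these splittings yields (1) and (3), the relevant sheaves being $\Ocp$-flat so that base change computes $R\Gamma$ on $\mathcal X_R$. For (2) it suffices to establish $E_1$-degeneration over $\Ocp$ and base change: over the domain $\Ocp$ the $E_1$-terms $\H^b(\mathcal X,\Omega^a_{\mathcal X/\Ocp})$ are free of rank $h^{a,b}$ and the abutment $\H^n_{\dR}(\mathcal X/\Ocp)$ is free of rank $h^n=\sum_{a+b=n}h^{a,b}$, so a rank count forces every differential to vanish (a map of finite free $\Ocp$-modules that dies after $-\otimes_{\Ocp}\mathbb C_p$ is zero), whence the Hodge filtration on $R\Gamma_{\dR}(\mathcal X/\Ocp)$ is, step by step, a sum of shifts of finite free modules and base-changes accordingly.

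For (4), since $R$ has characteristic $p$ and $\mathcal X_R=\mathcal X_{\Ocp/p}\times_{\Ocp/p}R$ is smooth (writing $\mathcal X_{\Ocp/p}:=\mathcal X\times_{\Ocp}\Ocp/p$), the Cartier isomorphism identifies $\mathcal H^b(\Omega^\bullet_{\mathcal X_R/R})$ with $\Omega^b_{\mathcal X_R^{(1)}/R}$, and one checks $\mathcal X_R^{(1)}=\mathcal X_{\Ocp/p}^{(1)}\times_{\Ocp/p}R$, so the formality statement over $\Ocp$ (applied after the Frobenius twist over $\Ocp/p$) and base change make $\H^i(\mathcal X_R,\mathcal H^b(\Omega^\bullet_{\mathcal X_R/R}))$ and $\H^i_{\dR}(\mathcal X_R/R)$ finite free over $R$, while $\Omega^\bullet_{\mathcal X_R/R}\otimes^L_R k\simeq\Omega^\bullet_{Y/k}$. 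Over the field $k$, parts (1) and (3) force $\sum_{a+b=n}\Dim_k\H^a(Y,\mathcal H^b(\Omega^\bullet_{Y/k}))=\sum_{a+b=n}h^{a,b}=h^n=\Dim_k\H^n_{\dR}(Y/k)$, so the conjugate spectral sequence of $Y/k$ degenerates at $E_2$, and Lemma~\ref{lem : spectraldegeneration}(2) transfers this to $\mathcal X_R/R$; feeding the vanishing of the higher conjugate differentials into the truncation triangles $\tau_{\le n-1}\Omega^\bullet_{\mathcal X_R/R}\to\tau_{\le n}\Omega^\bullet_{\mathcal X_R/R}\to\mathcal H^n(\Omega^\bullet_{\mathcal X_R/R})[-n]$ and inducting on $n$ presents $\H^a(\mathcal X_R,\tau_{\le n}\Omega^\bullet_{\mathcal X_R/R})$ as an iterated extension of finite free $R$-modules, hence free of rank $\sum_{i=0}^n h^{i,a-i}$. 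The main work is doing this homological algebra cleanly over the non-Noetherian valuation ring $\Ocp$ (securing perfection and good base-change behaviour) and, in (4), the comparison between conjugate-degeneration over $R$ and over its residue field, for which Lemma~\ref{lem : spectraldegeneration}(2) is precisely the tool; the only characteristic-zero input, $\sum_{a+b=n}h^{a,b}=h^n$, enters merely as the seed of the formality argument for the de Rham complex.
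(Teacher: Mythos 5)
Your proof is correct, and for parts (1)--(3) it takes a genuinely different route from the paper. The paper handles (1) by appealing to Grauert's cohomology-and-base-change theorem directly over $\Spec(\Ocp)$, which is not Noetherian, and must therefore cite a non-Noetherian extension of Grauert. You instead spread out $\mathcal X$ to a Noetherian subring to obtain perfection of $R\Gamma(\mathcal X,\Omega^j_{\mathcal X/\Ocp})$ and $R\Gamma_{\dR}(\mathcal X/\Ocp)$, take a minimal model over the local ring $\Ocp$, identify the ranks of its terms with $\Dim_k$ and $\Dim_{\mathbb C_p}$ of the two fibre cohomologies, and let assumption \ref{eq : equalityhodgenumber} together with the characteristic-zero identity $h^n=\sum_{a+b=n}h^{a,b}$ force the differentials to die after $-\otimes_{\Ocp}\mathbb C_p$, hence to vanish since $\Ocp\hookrightarrow\mathbb C_p$. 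This unifies (1) and (3) under one formality statement and makes (2) a rank count followed by a lift, which is essentially the content of the paper's commutative-square argument; it avoids any appeal to Grauert over a non-Noetherian base, at the cost of a spreading-out step and the (standard) existence of minimal models for perfect complexes over a local ring.

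For (4) your argument runs parallel to the paper's: Cartier to identify $\mathcal H^b(\Omega^\bullet_{\mathcal X_R/R})$ with $\Omega^b_{\mathcal X_R^{(1)}/R}$, the observation that $\mathcal X_R^{(1)}$ is a base change of $\mathcal X$ along a Frobenius-twisted $\Ocp$-algebra structure so that part (1) applies, Lemma~\ref{lem : spectraldegeneration}(2) to descend degeneration from $k$ to $R$, and a dimension count over $k$ to start the induction. One point in your favour: you explicitly derive the local freeness of $\H^a(\mathcal X_R,\tau_{\leq n}\Omega^\bullet_{\mathcal X_R/R})$ from the truncation triangles after degeneration has been secured, a step the paper's proof text leaves implicit inside its invocation of Lemma~\ref{lem : spectraldegeneration}. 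Two small cautions worth recording if you write this up in full: the claim that the differentials vanish ``by a rank count'' in (2) is really the two-step argument you give in parentheses (equality of ranks forces degeneration after $-\otimes_{\Ocp}\mathbb C_p$, and then freeness of each successive $E_r$ page lifts this to $\Ocp$); and in (4) the phrase ``iterated extension of finite free $R$-modules, hence free'' uses that $R$ is local, which is part of the hypotheses.
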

\proof
\begin{enumerate}
\item[]
	\item Since $\Ocp$ is reduced, this follows from Grauert's theorem (see e.g.\ \cite[25.1.5, p.~731]{Vakil} and \cite[Exercise 25.2.C (c), p.~740]{Vakil} for the extension to the non-Noetherian situation), thanks to the assumption \ref{eq : equalityhodgenumber}.
	\item By point (1) and the functoriality of the Hodge-to-de Rham spectral sequence, it is enough to show this for $R=\Ocp$. In this case, it follows from the commutative diagram
	\begin{center}
	\begin{tikzcd}
\H^b(\mathcal X,\Omega^{a}_{\mathcal X/\Ocp})\arrow{r}{d}\arrow{d} & \H^b(\mathcal X,\Omega^{a+1}_{\mathcal X/\Ocp})\arrow{d}\\
\H^b(X,\Omega^{a}_{X/\mathbb C_p})\arrow{r}{d}& \H^b(X,\Omega^{a+1}_{X/\mathbb C_p}),
	\end{tikzcd}
\end{center}
since the spectral sequence always degenerates in characteristic 0 and the vertical maps are injective by point (1). 
\item Since the Hodge-to-de Rham spectral sequence degenerates at the first page by the previous point, the induced filtration $F^i$ on $\H_{\dR}^{n}(\mathcal X/\Ocp)$ satisfies 
$$F^{i}/F^{i+1}\simeq \H^{n-i}(\mathcal X,\Omega^{i}_{\mathcal X/\Ocp}),$$
in particular $F^{i}/F^{i+1}$ is free.  Hence, one has a  morphism of spectral sequences
	\begin{center}
	\begin{tikzcd}
\H^b(\mathcal X,\Omega^a_{\mathcal X/\Ocp})\otimes R\arrow{d}\arrow[Rightarrow]{r}& \H^{a+b}_{\dR}(\mathcal X/\Ocp)\otimes R\arrow{d}\\
\H^b(\mathcal X_R,\Omega^a_{\mathcal X_R/R})\arrow[Rightarrow]{r}& \H^{a+b}_{\dR}(\mathcal X_R/R)
	\end{tikzcd}
\end{center}
and the conclusion follows from point (1). 
\item By Cartier, $F_{X/R,*}\mathcal H^i(\Omega_{\mathcal X_R/R}^{\bullet})\simeq \Omega^i_{\mathcal X^{(1)}_R/R}$, where $X^{(1)}$ is the Frobenius twist of $X$ over $R$ and $F_{X/R}:X\rightarrow X^{(1)}$ is the relative Frobenius. In particular, 
$$\mathcal H^i(\Omega^{\bullet}_{\mathcal X_R/R})\quad \text{and} \quad \H^i(\mathcal X_R,\mathcal H^n(\Omega_{\mathcal X_R/R}^{\bullet}))$$
are $R$-flat, the second by point (1). 
Hence, by Lemma~\ref{lem : spectraldegeneration}, it remains to show that  
 $$\H^a(Y,\mathcal H^b(\Omega^{\bullet}_{Y/k}))\Rightarrow \H^{a+b}_{\dR}(Y/k)$$
 degenerates at the second page. This follows again from Cartier's isomorphism and point (2), since $Y$ is proper over $k$. \qedhere
\end{enumerate}
\endproof
\subsubsection{Prismatic cohomology groups}
\begin{proposition}\label{prop : basicpropertiesprismatic}
Assume that \ref{eq : equalityhodgenumber} holds. 
\begin{enumerate}
\item $\H^n(\Delta^{(1)}/p)$ and $\H^n(\Delta/p)$ are free $\mathcal O_{\mathbb C_p^{\flat}}$-modules of rank equal to $h^n$.
\item The natural map $$\H^n(\Delta^{(1)}/p)\otimes k\rightarrow \H^n_{\dr}(Y/k)$$ is an isomorphism. 
	\item The natural maps  
	$$\H^n(\Delta^{(1)}/p)/d\rightarrow \H^n(\overline \Delta^{(1)}/p) \quad \text{and}\quad \H^n(\Delta/p)/d\rightarrow  \H^n(\overline \Delta/p)$$ are isomorphisms. In particular, $\H^n(\overline \Delta^{(1)}/p)$ and $\H^n(\overline \Delta/p)$ are free $\mathcal O_{\mathbb C_p^{\flat}}/d$-modules of rank equal to $h^n$.
	\item 	The conjugate spectral sequence for $\overline \Delta/p$
	$$E_2^{a,b}:=\H^a(\mathcal H^b(\overline \Delta/p))\Rightarrow \H^{a+b}(\overline \Delta/p)$$
   degenerates at the second page.
	\end{enumerate}
\end{proposition}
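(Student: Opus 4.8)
The plan is to bootstrap from the mod $p$ de Rham comparison \eqref{eq : derhamcomparison} and Lemma \ref{lem : Hodgegroups}, passing between the four complexes $\Delta/p$, $\Delta^{(1)}/p=\varphi^*\Delta/p$, $\overline\Delta/p$ and $\overline\Delta^{(1)}/p$ via base change. First I would establish (1) and (2) together. Since $\varphi: \Ocpflat \to \Ocpflat$ is an automorphism, $\H^n(\Delta^{(1)}/p)\simeq \varphi^*\H^n(\Delta/p)$, so (1) for $\Delta/p$ is equivalent to (1) for $\Delta^{(1)}/p$; I will argue with the latter. The de Rham comparison \eqref{eq : derhamcomparison} gives $\varphi^*\overline\Delta/p\simeq \Omega^\bullet_{\mathcal X/p/(\Ocp/p)}$, and since $\overline\Delta/p\simeq \Delta/p\otimes^L_{\Ocpflat}\Ocpflat/d$ (reduction of $\Delta\otimes^L\Ocp\simeq\overline\Delta$ mod $p$, using $\Ocp/p\simeq\Ocpflat/d$), applying $\varphi^*$ yields $\overline\Delta^{(1)}/p\simeq \Delta^{(1)}/p\otimes^L\Ocpflat/d\simeq\Omega^\bullet_{\mathcal X/p/(\Ocp/p)}$. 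Now base-change $\Ocpflat/d\simeq\Ocp/p$ further to $k$ along $\widetilde\theta$ composed with the residue map: by the compatibility diagrams in Section \ref{subsec : review} one gets $\Delta^{(1)}/p\otimes^L_{\Ocpflat} k\simeq \Omega^\bullet_{Y/k}$, i.e. $\H^*(\Delta^{(1)}/p)\otimes^L k\simeq \H^*_{\dR}(Y/k)$, whose terms are finite-dimensional of total (Euler-characteristic-respecting) dimensions $h^n$ by Lemma \ref{lem : Hodgegroups}(3) applied over $k$ together with assumption \ref{eq : equalityhodgenumber}.

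The key technical point is to upgrade "the derived base change to $k$ has the expected cohomology" to "$\H^n(\Delta^{(1)}/p)$ is \emph{finite free} over $\Ocpflat$ of rank $h^n$." Here I would use that $\Ocpflat$ is a (coherent, non-discrete) complete valuation ring with residue field $k$ and maximal ideal $(d^\alpha)_{\alpha>0}$, so it is in particular a coherent local ring over which finitely generated flat modules are free. The cleanest route is a derived-Nakayama / spectral sequence argument: $\Delta^{(1)}/p$ is a perfect complex (the prismatic cohomology is perfect over $\Ainf$ by \cite{BS}, hence $\Delta/p$ is perfect over $\Ocpflat$, hence so is its $\varphi$-pullback), so $\H^n(\Delta^{(1)}/p)$ is finitely presented; the universal coefficients / base-change spectral sequence
$$
\mathrm{Tor}^{\Ocpflat}_i(\H^j(\Delta^{(1)}/p),k)\Rightarrow \H^{i+j}(\Delta^{(1)}/p\otimes^L k)=\H^{i+j}_{\dR}(Y/k)
$$
has total dimension of the abutment in degree $n$ equal to $h^n$ (by \ref{eq : equalityhodgenumber} plus Lemma \ref{lem : Hodgegroups}), while the total dimension is $\geq \sum_n \dim_k(\H^n(\Delta^{(1)}/p)\otimes k)\geq \sum_n h^n$ with equality iff all $\mathrm{Tor}_{>0}$ vanish and each $\H^n\otimes k$ has dimension exactly $h^n$; the first inequality is the comparison in characteristic $0$, namely $\H^n_{\et}(X,\Q_p)$ has dimension $h^n$ and the generic fibre of prismatic cohomology recovers it after inverting appropriate elements. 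This forces $\H^n(\Delta^{(1)}/p)$ flat (hence free) of rank $h^n$, giving (1) and (2).

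With (1) and (2) in hand, part (3) is then formal: flatness of $\H^n(\Delta^{(1)}/p)$ and $\H^n(\Delta/p)$ over $\Ocpflat$ kills the relevant $\mathrm{Tor}_1$, so the base-change spectral sequence for $-\otimes^L\Ocpflat/d$ collapses to the claimed isomorphisms $\H^n(\Delta^{(1)}/p)/d\xrightarrow{\sim}\H^n(\overline\Delta^{(1)}/p)$ and $\H^n(\Delta/p)/d\xrightarrow{\sim}\H^n(\overline\Delta/p)$, and freeness of rank $h^n$ over $\Ocpflat/d$ is inherited. For part (4), I would invoke Lemma \ref{lem : spectraldegeneration}(2) with $R=\Ocpflat/d$ (a coherent complete local ring, of characteristic $p$): one must check the hypotheses that $\mathcal H^b(\overline\Delta/p)$ and $\H^a(\mathcal H^b(\overline\Delta/p))$ are finite free and that $\H^i$ of everything in sight is finite free — these follow from (3) together with the Hodge–Tate comparison \eqref{eq : hodgeTate} identifying $\mathcal H^b(\overline\Delta/p)$ with $\Omega^b_{\mathcal X/p/(\Ocp/p)}$ (a locally free sheaf, as $\mathcal X$ is smooth) and Lemma \ref{lem : Hodgegroups}(1),(4) — and then the degeneration of the conjugate spectral sequence over $k$, which is Lemma \ref{lem : Hodgegroups}(4) combined with \eqref{eq : derhamcomparison} identifying $\overline\Delta/p\otimes^L k$ with $\Omega^\bullet_{Y/k}$, propagates up to $\Ocpflat/d$.

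The main obstacle I anticipate is the freeness in (1): one must carefully feed in the characteristic-$0$ input (that $\dim_{\Q_p}\H^n_{\et}(X,\Q_p)=h^n$) to pin down the rank, and must be scrupulous about which base-change maps are isomorphisms over the non-Noetherian ring $\Ocpflat$ — it is here that perfectness of prismatic cohomology over $\Ainf$ (hence of $\Delta/p$ over $\Ocpflat$) is essential, and the compatibility diagrams relating $\Ainf$, $W(k)$, $\Ocp$, $k$ via $\theta$, $\widetilde\theta$ and $\beta$ must be used exactly as displayed in Section \ref{subsec : review} to ensure $\Delta^{(1)}/p\otimes^L_{\Ocpflat}k$ really is the de Rham complex of $Y/k$ and not some twist.
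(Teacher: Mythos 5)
Your route for part (1) is genuinely different from the paper's and is essentially sound, but there is a subtle point you skip that I want to flag. The paper works directly with the structure theorem for finitely presented modules over the valuation ring $\Ocpflat$: it shows freeness by comparing the generic rank $\dim_{\Cc_p^\flat}\H^n(\Delta^{(1)}/p)[1/d]$ (computed via the étale comparison to be $\dim_{\F_p}\H^n_{\et}(X,\Z/p)$) to the minimal number of generators of $\H^n(\Delta^{(1)}/p)/d$ (bounded above by $h^n$ via the de Rham comparison over $\Ocpflat/d$ and Lemma~\ref{lem : Hodgegroups}(3)), and closes the loop with $h^n\leq\dim_{\F_p}\H^n_{\et}(X,\Z/p)$ from the universal coefficient sequence in étale cohomology. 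You instead base-change all the way to $k$ and run a Tor spectral sequence. That gets the rank count and part (2) simultaneously, which is an economy; the paper deduces (2) separately from (1) and Theorem~\ref{thm : BS}.

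The gap in your write-up: for your chain of inequalities to pinch, you need the map $\H^n(\Delta^{(1)}/p)\otimes k\to\H^n(\Delta^{(1)}/p\otimes^Lk)$ to be injective, equivalently that the Tor spectral sequence degenerates into the universal coefficient short exact sequences. This is not a general feature of the abutment-versus-$E_2$ comparison (which a priori only gives an inequality in the wrong direction for your argument); it holds here because $\Ocpflat$ is a valuation ring, so $\Tor_i^{\Ocpflat}(-,k)=0$ for $i\geq 2$ on finitely presented modules and the spectral sequence has only two columns. You allude to the coherent-local-ring structure of $\Ocpflat$ but never state this degeneration, and without it the sentence "with equality iff all $\Tor_{>0}$ vanish and each $\H^n\otimes k$ has dimension exactly $h^n$" doesn't follow from what precedes it. Once you add that observation, the argument is correct: from $h^n=\dim_k(\H^n\otimes k)+\dim_k\Tor_1(\H^{n+1},k)$ and $\dim_k(\H^n\otimes k)\geq\dim\H^n[1/d]=\dim_{\F_p}\H^n_{\et}(X,\Z/p)\geq h^n$ one forces all $\Tor_1=0$, hence torsion-freeness, hence freeness of rank $h^n$. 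One more small imprecision: when you justify $\dim_k(\H^n\otimes k)\geq h^n$ by "the comparison in characteristic $0$", what is actually needed is the mod $p$ étale comparison giving $\H^n[1/d]\simeq\H^n_{\et}(X,\Z/p)\otimes\Cc_p^\flat$, followed by $\dim_{\F_p}\H^n_{\et}(X,\Z/p)\geq\dim_{\Q_p}\H^n_{\et}(X,\Q_p)=h^n$; phrasing it as the $\Q_p$-comparison conflates two steps. Your treatment of parts (3) and (4) matches the paper's.
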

\proof
\begin{enumerate}
	\item[]
	\item Since the Frobenius $\mathcal O_{\mathbb C_p^{\flat}}\rightarrow \mathcal O_{\mathbb C_p^{\flat}}$ is an isomorphism, by \cite[Corollary 4.12]{BS} one has that  $\H^n(\Delta^{(1)}/p)\simeq \H^n(\Delta/p)\otimes_{\varphi_{\Ainf}}\mathcal O_{\mathbb C_p^{\flat}}$, so that it is enough to prove the statement for $\Delta^{(1)}$. Since $\mathcal O_{\mathbb C_{p}^{\flat}}$ is a valuation ring with maximal ideal $(d^{1/p^{\infty}})$, it is enough to show that the the minimal number of generators of $\H^n(\Delta^{(1)}/p)/d$ is at most the dimension of $\H^n(\Delta^{(1)}/p)[1/d]$ as a $\mathbb C_p^{\flat}$-vector space. By the \'etale comparison \cite[Theorem 1.8]{BS}, one has
	$\H^n(\Delta^{(1)}/p)[1/d]\simeq \H^n_{\et}(X,\Z/p)\otimes \mathbb C_p^{\flat}$, see for example \cite[Lemmas 2.1.6 and 2.2.10]{FKW}. Hence, it suffices to show that the minimal number of generators of $\H^n(\Delta^{(1)}/p)/d$ is at most $\Dim_{\F_p}(\H^n_{\et}(X,\Z/p))$. By \eqref{eq : derhamcomparison}, there is an isomorphism
	$\H^n(\Delta^{(1)}/p\otimes^L \Ocpflat/d)\simeq \H^n_{\dR}(\mathcal X/p/ (\Ocpflat/d)),$
	hence an inclusion 
	$$\H^n(\Delta^{(1)}/p)/d\hookrightarrow \H^n_{\dR}(\mathcal X/p/(\Ocpflat/d)).$$
By Lemma \ref{lem : Hodgegroups}(3), it remains to show that  $\Dim_{\mathbb{C}_p}(\H^n_{\dR}(X/\mathbb C_p)\leq \Dim_{\F_p}(\H^n_{\et}(X,\Z/p))$, which follows from the fact that  
$\Dim_{\mathbb{C}_p}(\H^n_{\dR}(X/\mathbb C_p))=\Dim_{\mathbb{Q}_p}(\H^n_{\et}(X,\Q_p))$ and the universal coefficients short exact sequence
$$0\rightarrow \H^n_{\et}(X,\Z_p)/p\rightarrow \H^n_{\et}(X,\Z/p)\rightarrow \H^{n+1}_{\et}(X,\Z_p)[p]\rightarrow 0.$$
\item This follows from (1) and Theorem \ref{thm : BS}.
\item This follows from (1) and the universal coefficient theorem. 
\item Since $\mathcal X/p$ is smooth over $\Ocp/p$, the sheaf $\Omega^{i}_{\mathcal X/p/(\Ocp/p)}$ is finite locally free over $\Ocp/p$. 
By Lemma \ref{lem : Hodgegroups} and point (3), $\H^i\bigl(\mathcal X/p,\Omega^j_{\mathcal X/p/(\Ocp/p)}\bigr)$ and $\H^i(\overline \Delta/p)$  are finite free $\Ocpflat/d$-modules. 
Hence, Lemma \ref{lem : spectraldegeneration}(2) and Theorem \ref{thm : BS} imply that it is enough to show that the conjugate spectral sequence 
		$$E_{2}^{a,b}:=\H^{a}(\mathcal H^b(\overline \Delta\otimes^Lk))\Rightarrow \H^{a+b}(\overline \Delta\otimes ^Lk)$$ 	
		for $\overline \Delta\otimes^L k$ degenerates at the second page. Since these are finite-dimensional $k$-vector spaces, it is enough to show that the conjugate spectral sequence 
			$$E_{2}^{a,b}:=\H^{a}(\mathcal H^b(\varphi^*(\overline \Delta\otimes^Lk)))\Rightarrow \H^{a+b}(\varphi^*(\overline \Delta\otimes ^Lk))$$ 
			degenerates at the second page. 
			By Theorem \ref{thm : BS} and Lemma \ref{lem : Hodgegroups}, the latter is identified with the conjugate spectral sequence for de Rham cohomology of $Y$, which degenerates at the second page, again by Lemma \ref{lem : Hodgegroups}. \qedhere
    \end{enumerate}
    \endproof
\subsubsection{Nygaard filtration cohomology groups}    \begin{proposition}\label{prop : nygaard}
    Assume that \ref{eq : equalityhodgenumber} holds. Then
        the natural maps $\H^n(N^{\geq i}/p)\xrightarrow{\iota} \H^n(\Delta^{(1)}/p)$ are injective with image
	$\phi^{-1}(d^i\H^n(\Delta/p))$. In particular, $\H^n(N^{\geq i}/p)$ is a free $\mathcal O_{\mathbb C_p^{\flat}}$-module of rank equal to $h^n$.	
    \end{proposition}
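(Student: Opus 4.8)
The plan is to argue by induction on $i\geq 0$. The case $i=0$ is immediate: $N^{\geq 0}/p=\Delta^{(1)}/p$, the map $\iota$ is the identity, and $\phi^{-1}\bigl(d^{0}\H^{n}(\Delta/p)\bigr)=\H^{n}(\Delta^{(1)}/p)$. Before the induction I would record some consequences of the degeneration results already proved. Applying Lemma~\ref{lem : Hodgegroups}(1) with $R=\Ocp/p\simeq\Ocpflat/d$, the groups $\H^{n-b}\bigl(\mathcal X/p,\Omega^{b}_{\mathcal X/p/(\Ocp/p)}\bigr)$ are finite free over $\Ocpflat/d$ of rank $h^{b,n-b}$; combining this with the Hodge--Tate triangles \eqref{eq : hodgeTate} and the degeneration of the conjugate spectral sequence for $\overline{\Delta}/p$ (Proposition~\ref{prop : basicpropertiesprismatic}(4)), the maps $\H^{n}\bigl(\tau_{\leq i}\overline{\Delta}/p\bigr)\to\H^{n}\bigl(\overline{\Delta}/p\bigr)$ are injective with image the $i$-th step of the conjugate filtration, and $\H^{n}\bigl(\tau_{\leq i}\overline{\Delta}/p\bigr)$ is finite free over $\Ocpflat/d$ of rank $\sum_{b\leq i}h^{b,n-b}$. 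I also use the identification $\H^{n}(\overline{\Delta}/p)\simeq\H^{n}(\Delta/p)/d$ and the freeness of $\H^{n}(\Delta/p)$ and $\H^{n}(\Delta^{(1)}/p)$ from Proposition~\ref{prop : basicpropertiesprismatic}. Write $\phi\colon\H^{n}(\Delta^{(1)}/p)\to\H^{n}(\Delta/p)$ for the map induced by the linearised Frobenius; it is injective, since its source is torsion free and, by the \'etale comparison used in the proof of Proposition~\ref{prop : basicpropertiesprismatic}(1), it becomes an isomorphism after inverting $d$. In particular $d^{-i}\phi(x)\in\H^{n}(\Delta/p)$ is well defined whenever $\phi(x)\in d^{i}\H^{n}(\Delta/p)$.

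Next I would identify the connecting data. By construction of \eqref{eq : grnygard} (following \cite{BS}), the map $\phi_{i}\colon N^{\geq i}/p\to\tau_{\leq i}\overline{\Delta}/p$ appearing there becomes, after the inclusion $\tau_{\leq i}\overline{\Delta}/p\hookrightarrow\overline{\Delta}/p$, the reduction modulo $d$ of the divided Frobenius $\phi_{i}\colon N^{\geq i}/p\to\Delta/p$; and the commutative triangle following \eqref{eq : grnygard} gives the chain-level identity $\phi\circ\iota=\xi^{i}\circ\phi_{i}$ (modulo $p$, $d^{i}\phi_{i}$). Hence, under the inductive identification $\H^{n}(N^{\geq i}/p)\xrightarrow{\ \sim\ }\phi^{-1}\bigl(d^{i}\H^{n}(\Delta/p)\bigr)\subseteq\H^{n}(\Delta^{(1)}/p)$, the map $\psi_{i}:=\H^{n}(\phi_{i})\colon\H^{n}(N^{\geq i}/p)\to\H^{n}\bigl(\tau_{\leq i}\overline{\Delta}/p\bigr)\subseteq\H^{n}(\Delta/p)/d$ is carried to $x\mapsto\bigl(d^{-i}\phi(x)\bmod d\bigr)$. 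It follows at once that $\ker\psi_{i}=\phi^{-1}\bigl(d^{i+1}\H^{n}(\Delta/p)\bigr)$.

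Granting that $\psi_{i}$ is surjective for every $n$, the long exact cohomology sequence of \eqref{eq : grnygard} breaks into short exact sequences
\[
0\longrightarrow\H^{n}(N^{\geq i+1}/p)\xrightarrow{\ \iota'\ }\H^{n}(N^{\geq i}/p)\xrightarrow{\ \psi_{i}\ }\H^{n}\bigl(\tau_{\leq i}\overline{\Delta}/p\bigr)\longrightarrow 0.
\]
The middle term is free of rank $h^{n}$ over $\Ocpflat$ by the inductive hypothesis, and the right-hand term is killed by $d$. Choosing a basis of $\H^{n}(N^{\geq i}/p)$ adapted to the induced surjection of $\Ocpflat/d$-modules $\H^{n}(N^{\geq i}/p)/d\twoheadrightarrow\H^{n}\bigl(\tau_{\leq i}\overline{\Delta}/p\bigr)$ (a short direct computation over the local ring $\Ocpflat$), one sees that $\H^{n}(N^{\geq i+1}/p)$ is free over $\Ocpflat$ of rank $h^{n}$ and that $\iota'$ is injective with image $\ker\psi_{i}$. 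Transporting through the inductive embedding of $\H^{n}(N^{\geq i}/p)$ in $\H^{n}(\Delta^{(1)}/p)$ identifies this image with $\phi^{-1}\bigl(d^{i+1}\H^{n}(\Delta/p)\bigr)$, completing the induction and yielding the ``in particular'' clause.

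The main obstacle is therefore the surjectivity of $\psi_{i}$; equivalently, one must show that the modulo $p$ Nygaard spectral sequence $E_{1}^{a,b}=\H^{a+b}\bigl(\tau_{\leq a}\overline{\Delta}/p\bigr)\Rightarrow\H^{a+b}(\Delta^{(1)}/p)$ degenerates at $E_{1}$, i.e.\ that every $\iota'\colon\H^{n}(N^{\geq a+1}/p)\to\H^{n}(N^{\geq a}/p)$ is injective. The plan is to deduce this from the already-established degeneration of the conjugate spectral sequence for $\overline{\Delta}/p$ (Proposition~\ref{prop : basicpropertiesprismatic}(4)): via \eqref{eq : grnygard}, \eqref{eq : hodgeTate} and the Frobenius-twisted de Rham comparison \eqref{eq : derhamcomparison}, both spectral sequences are compatible refinements of filtrations whose graded pieces are the locally free sheaves $\Omega^{b}_{\mathcal X/p/(\Ocp/p)}[-b]$, and Lemma~\ref{lem : Hodgegroups} guarantees that all the relevant cohomology sheaves and hypercohomology groups are finite free over $\Ocp/p$; one then transports the degeneration from the conjugate side to the Nygaard side (in the spirit of \cite{BMS2}), descending to the residue field $k$ and invoking Nakayama's lemma together with the freeness statements obtained along the induction. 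Carrying out this comparison, and more generally handling the semilinear algebra over the non-Noetherian valuation ring $\Ocpflat$, is where the real work of the proof lies.
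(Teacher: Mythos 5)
Your setup is sound: the base case, the identification of $\ker\psi_i=\phi^{-1}(d^{i+1}\H^n(\Delta/p))$ via the chain-level identity $\phi\circ\iota=d^i\phi_i$, and the use of the injectivity $\H^n(\tau_{\leq i}\overline\Delta/p)\hookrightarrow\H^n(\overline\Delta/p)$ coming from Proposition~\ref{prop : basicpropertiesprismatic}(4) are all correct and mirror ingredients in the paper's proof. But your inductive step requires, in addition, the \emph{surjectivity} of $\psi_i$ — i.e.\ $E_1$-degeneration of the modulo~$p$ Nygaard spectral sequence — and this is exactly where you stop, with a sketch that you yourself flag as ``where the real work lies.'' That sketch does not close the gap: a naive rank count fails because the $E_1$ terms $\H^n(\tau_{\leq a}\overline\Delta/p)$ are $\Ocpflat/d$-modules while the abutment $\H^n(\Delta^{(1)}/p)$ is a free $\Ocpflat$-module, so one is not comparing lengths over a single ring; and pinning down how $\phi^{-1}(d^a\H^n(\Delta/p))/\phi^{-1}(d^{a+1}\H^n(\Delta/p))$ sits inside $\H^n(\overline\Delta/p)$ is essentially the Hodge-slope computation of Proposition~\ref{prop : hodgevsgeometrichodge}, which in the paper is a \emph{consequence} of Proposition~\ref{prop : nygaard}, not an input.

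The paper's proof sidesteps this entirely by turning the induction onto the quotient side. Instead of trying to prove injectivity of $\iota'\colon\H^n(N^{\geq i+1}/p)\to\H^n(N^{\geq i}/p)$ (which forces you to control connecting maps, hence surjectivity in lower degree), the paper shows by induction that $\phi\colon\H^n\bigl((\Delta^{(1)}/p)/(N^{\geq i}/p)\bigr)\to\H^n(\Delta/p)/d^i$ is injective for all $n$. The inductive step is a diagram chase on the short exact sequence of complexes
\[
0\to (N^{\geq i}/p)/(N^{\geq i+1}/p)\to (\Delta^{(1)}/p)/(N^{\geq i+1}/p)\to (\Delta^{(1)}/p)/(N^{\geq i}/p)\to 0
\]
mapping to $0\to\H^n(\Delta/p)/d\xrightarrow{\ d^i\ }\H^n(\Delta/p)/d^{i+1}\to\H^n(\Delta/p)/d^{i}\to 0$: injectivity of the middle vertical map follows from injectivity of the two outer ones (the left one is $\phi_i$ on $\H^n(\tau_{\leq i}\overline\Delta/p)$, injective by the conjugate spectral sequence degeneration; the right one is the induction hypothesis), and no surjectivity is needed as input. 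The degeneration you need then drops out a posteriori. I would recommend reworking your inductive step along these lines: your preliminary observations are all reusable, but the surjectivity statement must be a conclusion, not a hypothesis of the argument.
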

    \proof
		The proof is by induction on $i$, where the case $i=0$ follows from the fact that $N^{\geq 0}=\Delta^{(1)}$. Assume that $i\geq 0$ and that the statement holds for $j<i+1$. Since 
        $$\Image(\H^n(N^{\geq i+1}/p)\xrightarrow{\iota} \H^n(\Delta^{(1)}/p))\subseteq \phi^{-1}(d^{i+1}\H^n(\Delta/p)),$$
         there is a commutative diagram with exact rows
	\begin{center}
	\begin{tikzcd}[column sep=tiny]
& \H^n(N^{\geq i+1}/p)\arrow{r}{\iota}\arrow{d}& \H^n(\Delta^{(1)}/p)\arrow{r}\arrow[equal]{d}& \H^n((\Delta^{(1)}/p)/(N^{\geq i+1}/p))\arrow{d}{g}\\
0\arrow{r}& \phi^{-1}(d^{i+1}\H^n(\Delta/p))\arrow{r}&\H^n(\Delta^{(1)}/p)\arrow{r}& \H^n(\Delta^{(1)}/p)/\phi^{-1}(d^{i+1}\H^n(\Delta/p)) \arrow{r}&0,
	\end{tikzcd}
\end{center}
in which $g$ is surjective. Hence, the statement is equivalent to the injectivity of $g:\H^n((\Delta^{(1)}/p)/(N^{\geq i+1}/p))\rightarrow  \H^n(\Delta^{(1)}/p))/\phi^{-1}(d^{i+1}H^n(\Delta/p))$ for every $n$ (and similarly for the induction hypothesis).

Observe that $\phi: \H^n(\Delta^{(1)}/p)\rightarrow \H^n(\Delta/p)$ induces a commutative diagram
	\begin{center}
	\begin{tikzcd}[column sep=tiny]
\H^n((\Delta^{(1)}/p)/(N^{\geq i+1}/p))\arrow{rr}{g}\arrow{dr}{\phi}&&\H^n(\Delta^{(1)}/p)/\phi^{-1}(d^{i+1}\H^n(\Delta/p)) \arrow[hook]{dl}{\phi}\\
& \H^n(\Delta/p)/d^{i+1},
	\end{tikzcd}
\end{center}
where the map $\H^n(\Delta^{(1)}/p)/\phi^{-1}(d^{i+1}\H^n(\Delta^{(1)}/p))\rightarrow \H^n(\Delta/p)/d^{i+1}$ is injective.
Therefore, the statement is equivalent to the injectivity of $\phi:\H^n((\Delta^{(1)}/p)/(N^{\geq i+1}/p))\rightarrow  \H^n(\Delta/p)/d^{i+1}$ for every $n$ (and similarly for the induction hypothesis).

Consider the commutative diagram with exact rows
$$\begin{tikzcd}[column sep=tiny]
 \arrow[eq=d]{d} &\H^n((N^{\geq i}/p)/(N^{\geq i+1}/p))\arrow{d}{\phi_i}\arrow{r}&\H^n((\Delta^{(1)}/p)/(N^{\geq i+1}/p))\arrow{d}{\phi}\arrow{r} & \H^n((\Delta^{(1)}/p)/(N^{\geq i}/p))\arrow{d}{\phi}\\
0\arrow{r}&\H^n(\Delta/p)/d\simeq \H^n(\overline \Delta/p) \arrow{r}{d^i}&\H^n(\Delta/p)/d^{i+1}\arrow{r}&\H^n(\Delta/p)/d^{i}\arrow{r}&0. 
	\end{tikzcd}
$$
where the isomorphism in the first term of the bottom row follows from Proposition \ref{prop : basicpropertiesprismatic}. By the induction hypothesis, the rightmost vertical arrow is injective, hence it is enough to show that the map  $\H^n((N^{\geq i}/p)/(N^{\geq i+1}/p))\rightarrow \H^n(\overline \Delta/p)$ is injective. This is induced by the map of sheaves
$$\phi_i:(N^{\geq i}/p)/(N^{\geq i+1}/p)\rightarrow \overline \Delta/p,$$
which, by Theorem~\ref{thm : BS}, induces an isomorphism $(N^{\geq i}/p)/(N^{\geq i+1}/p)\rightarrow \tau_{\leq i}\overline \Delta/p$. Hence, it is enough to show that the natural map 
$$\H^n(\tau_{\leq i}\overline \Delta/p)\rightarrow \H^n(\overline \Delta/p)$$
is injective, which again follows from Proposition \ref{prop : basicpropertiesprismatic}.
\endproof
\subsection{Prismatic interpretation of $p$-adic vanishing cycles}\label{subsec : prismaticvanishing}
\subsubsection{Vanishing cycles and $d^i$-fixed points of Frobenius}
The following is essentially \cite[Theorem 10.1]{BMS2}, as explained in \cite[Remark 3.2]{Morrowpadicvanishing}. It gives a relation between the action of Frobenius in prismatic cohomology, the Nygaard filtration and the sheaves of vanishing cycles. 
\begin{theorem}\label{thm : exactsequencevanishing}
If $j:X\rightarrow \mathcal X$ is the natural inclusion, then for every $i\geq 0$ there is a natural long exact sequence
$$\dots\rightarrow \H^n(\mathcal X, \tau_{\leq i} Rj_*\mathbb Z/p)\rightarrow \H^n(N^{\geq i}/p)\xrightarrow{\iota-\varphi^{(1)}_i} \H^n(\Delta^{(1)}/p)\rightarrow \H^{n+1}(\tau_{\leq i} Rj_*\mathbb Z/p)\rightarrow \dots$$
\end{theorem}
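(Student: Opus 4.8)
The plan is to obtain the long exact sequence by taking hypercohomology of an exact triangle of complexes of \'etale sheaves. I claim that, viewing $N^{\geq i}/p$ and $\Delta^{(1)}/p$ as complexes of \'etale sheaves on $\mathcal X$ (via the closed immersion of the special fibre, along which \'etale pushforward is exact), there is a fibre sequence
$$\tau_{\leq i}Rj_*\mathbb Z/p\longrightarrow N^{\geq i}/p\xrightarrow{\;\iota-\varphi^{(1)}_i\;}\Delta^{(1)}/p ,$$
i.e.\ $\tau_{\leq i}Rj_*\mathbb Z/p$ is the fibre of $\iota-\varphi^{(1)}_i$. Granting this, applying $R\Gamma(\mathcal X,-)$ and passing to the long exact sequence in hypercohomology gives exactly the stated sequence, using that $\H^n(\mathcal X,N^{\geq i}/p)=\H^n(N^{\geq i}/p)$ and $\H^n(\mathcal X,\Delta^{(1)}/p)=\H^n(\Delta^{(1)}/p)$.

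The triangle is established locally and then sheafified. For an affine open $\Spec R\subseteq \mathcal X$ with $R$ smooth over $\Ocp$, this is the content of \cite[Theorem 10.1]{BMS2}, which identifies $\tau_{\leq i}$ of the \'etale cohomology of the generic fibre of $\Spec R$ with the fibre of $\mathrm{can}-\varphi_i$ on the Nygaard-filtered $\Ainf$-cohomology of $R$. Rewriting the $\Ainf$-/$A\Omega$-side in prismatic terms via \cite{BS} --- in particular matching the two Nygaard filtrations and the divided Frobenii of \cite[Section 15]{BS} --- turns this into the fibre of $\iota-\varphi^{(1)}_i$ on $N^{\geq i}(R)$; this is precisely the reformulation recorded in \cite[Remark 3.2]{Morrowpadicvanishing}. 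Three routine points then give the statement we want: (a) one works with the mod $p$ reduction, which is the form in which the cited results apply and in which it is essential that only the truncation $\tau_{\leq i}$, not all of $Rj_*\mathbb Z/p$, occurs; (b) the Tate twist is suppressed, which is harmless because $X=\mathcal X_{\mathbb C_p}$ lies over the algebraically closed field $\mathbb C_p$, so the chosen compatible system $\{\zeta_{p^n}\}$ trivialises $\mathbb Z/p(i)\simeq\mathbb Z/p$; (c) all of $\tau_{\leq i}Rj_*\mathbb Z/p$, $N^{\geq i}/p$, $\Delta^{(1)}/p$ and the map $\iota-\varphi^{(1)}_i$ are globally defined complexes of (resp.\ a morphism of) \'etale sheaves, so the identification of the fibre, being local, glues over $\mathcal X$.

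The main difficulty is bookkeeping rather than a new idea: one must carefully match the $L\eta$/$A\Omega$-presentation of the divided Frobenius in \cite{BMS2} with the prismatic $\varphi^{(1)}_i$ used here, and check compatibility with reduction modulo $p$ and with the (trivial) Tate twist. This is exactly what \cite[Remark 3.2]{Morrowpadicvanishing} arranges, so the proof ultimately consists of invoking that reference with these translations made explicit and then taking the long exact sequence of the resulting triangle.
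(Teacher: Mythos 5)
Your proposed sheaf-level fibre sequence on $\mathcal X_{\mathrm{et}}$,
\[
\tau_{\leq i}Rj_*\mathbb Z/p\longrightarrow N^{\geq i}/p\xrightarrow{\;\iota-\varphi^{(1)}_i\;}\Delta^{(1)}/p ,
\]
cannot exist as stated, and this is the central gap. You view $N^{\geq i}/p$ and $\Delta^{(1)}/p$ as complexes on $\mathcal X$ by pushing forward from the special fibre $Y$, so both are supported on $Y$. But $\tau_{\leq i}Rj_*\mathbb Z/p$ is \emph{not} supported on $Y$: already $\tau_{\leq 0}Rj_*\mathbb Z/p = j_*\mathbb Z/p$ is the constant sheaf on $\mathcal X$, with nonzero stalks at every geometric point of the generic fibre $X$. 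A fibre of a map between complexes supported on $Y$ is again supported on $Y$, so the proposed triangle fails at every point of $X$. ``Checking locally and gluing'' does not rescue this; the local statement already fails away from $Y$.

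What actually makes the theorem true, and what the paper's proof supplies, is a two-step reduction that you omit. First, one uses properness of $\mathcal X$ together with proper base change (and formal GAGA for torsion \'etale coefficients) to pass from $\H^n(\mathcal X,\tau_{\leq i}Rj_*\mathbb Z/p)$ to cohomology over the formal scheme $\widehat{\mathcal X}$ (equivalently, over the special fibre $Y$). Second, one invokes Huber's comparison theorem \cite[Theorem 3.5.13]{Huberbook} to identify algebraic vanishing cycles with the rigid-analytic ones associated to the rigid generic fibre $\widehat{\mathcal X}_\eta$ of $\widehat{\mathcal X}$. Only after these reductions does the BMS fibre sequence --- which genuinely lives on the \'etale site of $\widehat{\mathcal X}$, comparing $\tau_{\leq i}Rb_*\mathbb Z/p$ (for $b:\Sh_{\et}(\widehat{\mathcal X}_\eta)\rightarrow \Sh_{\et}(\widehat{\mathcal X})$) with $N^{\geq i}/p$ and $\Delta^{(1)}/p$ --- apply, and taking its long exact sequence yields the statement. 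Properness of $\mathcal X$ is thus an essential hypothesis used at two separate points, and the algebraic-versus-rigid nearby cycles comparison is a nontrivial theorem, not bookkeeping. Your proposal conflates the algebraic open immersion $j:X\rightarrow\mathcal X$ with the rigid generic fibre inclusion throughout, which is precisely what these two reductions justify.
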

\proof
This follows combining \cite[Theorem 10.1, Remark 10.3]{BMS2} and \cite[Theorem 17.2]{BS}.  More precisely, since $\mathcal X$ is proper, for every torsion \'etale complex of sheaves $\mathcal F$ over $\mathcal X$, the natural maps
$$\H^n(\mathcal X, \mathcal F)\rightarrow \H^n(\widehat{\mathcal X}, i^*\mathcal F)\rightarrow \H^n(Y, i^*\mathcal F)$$
are isomorphisms, where we write $i:Y\rightarrow \mathcal X$ and $i:\widehat{\mathcal X}\rightarrow \mathcal X$ for both the natural morphisms. Hence, by \cite[Theorem 3.5.13, p.~207]{Huberbook}, there is a natural isomorphism
$$\H^n(\mathcal X, \tau_{\leq i} Rj_*\mathbb Z/p)\simeq \H^n(\widehat{\mathcal X}, \tau_{\leq i} Rb_*\mathbb Z/p),$$
where $b:\Sh_{\et}(\widehat{\mathcal X}_{\eta})\rightarrow  \Sh_{\et}(\widehat{\mathcal X})$ is the functor constructed in \cite[(3.5.12), p.~207]{Huberbook} and $\widehat{\mathcal X}_{\eta}$ is the rigid generic fibre of $\widehat{\mathcal X}$. Then the conclusion follows from \cite[Theorem 10.1]{BMS2} and \cite[Theorem 17.2]{BS}, since those give an exact triangle 
$$\tau_{\leq i} Rb_*\mathbb Z/p \rightarrow  N^{\geq i}/p\xrightarrow{\iota-\varphi^{(1)}_i}\Delta^{(1)}/p. 
$$
\qedhere
\endproof
We now explain when the long exact sequence in Theorem \ref{thm : exactsequencevanishing} can be broken into smaller pieces.  
\begin{lemma}\label{lem : injectivityprismatic}
	Assume that $n\leq i+1$. Then the natural map
	$$\H^n(\tau_{\leq i}Rj_*\mathbb Z/p) \rightarrow \H^n(N^{\geq i}/p)$$
	is injective. 
\end{lemma}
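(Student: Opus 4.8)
The plan is to feed the hypothesis $n \leq i+1$ into the long exact sequence of Theorem \ref{thm : exactsequencevanishing} and show that the connecting-type map immediately to the left of $\H^n(\tau_{\leq i}Rj_*\mathbb Z/p)$ is zero — equivalently, that the map $\iota - \varphi_i^{(1)}\colon \H^{n-1}(N^{\geq i}/p) \to \H^{n-1}(\Delta^{(1)}/p)$ is surjective. First I would invoke Proposition \ref{prop : nygaard}: under assumption \ref{eq : equalityhodgenumber}, the map $\iota\colon \H^{m}(N^{\geq i}/p) \to \H^m(\Delta^{(1)}/p)$ is \emph{injective} with image exactly $\phi^{-1}(d^i \H^m(\Delta/p))$, and both sides are finite free $\Ocpflat$-modules of rank $h^m$. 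So $\iota$ is injective but not generally surjective; surjectivity of $\iota - \varphi_i^{(1)}$ has to come from the Frobenius term.

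The key computational input is the relationship between $\varphi_i^{(1)}$ and $\iota$ via the divided Frobenius: recall $\varphi_i^{(1)} = s \circ \phi_i$ and $\phi \circ \iota = \xi^i \phi_i$ (so mod $p$, on cohomology, $\phi$ composed with the inclusion factors through multiplication by $d^i$). The clean way to see surjectivity of $\iota - \varphi_i^{(1)}$ on $\H^{n-1}$ is to identify its cokernel with the cokernel of the divided Frobenius $\phi_i$ and then argue that the latter vanishes in the relevant degree. Concretely, since $\H^{n-1}(N^{\geq i}/p) \xrightarrow{\sim} \phi^{-1}(d^i\H^{n-1}(\Delta/p))$ and the map $\phi\colon \H^{n-1}(\Delta^{(1)}/p) \to \H^{n-1}(\Delta/p)$ becomes an isomorphism after inverting $d$ (by the \'etale comparison, as in the proof of Proposition \ref{prop : basicpropertiesprismatic}(1)), one reduces to a statement about lengths/valuations of the $\Ocpflat$-modules $\H^{n-1}(\Delta/p) / d^i \H^{n-1}(\Delta/p)$ versus the analogous quotient for $\H^{n-1}(\Delta^{(1)}/p)$. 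The hypothesis $n-1 \leq i$ is exactly what guarantees that the divided Frobenius $\phi_{i}$ is already surjective on $\H^{n-1}$, because the Hodge–Tate weights of $\H^{n-1}$ are $\leq n-1 \leq i$, so multiplication by $d^i$ does not introduce a genuine obstruction — every class in $\H^{n-1}(\Delta^{(1)}/p)$ lies in $\phi^{-1}(d^i\H^{n-1}(\Delta/p))$ after applying a suitable power, or rather the image of $\phi_i$ already accounts for everything. This is the kind of argument implicit in \cite[Theorem 10.1]{BMS2} / \cite{BS}, and I would cite the Nygaard-filtration computation (Proposition \ref{prop : nygaard}) together with the comparison triangles of Theorem \ref{thm : BS} to make it precise.

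Alternatively, and perhaps more cleanly for exposition, one can argue directly with the exact triangle defining $\tau_{\leq i} Rj_* \mathbb Z/p$: the fiber of $\iota - \varphi_i^{(1)}\colon N^{\geq i}/p \to \Delta^{(1)}/p$ is $\tau_{\leq i} Rj_*\mathbb Z/p$, which is concentrated in cohomological degrees $\leq i$; hence $\mathcal H^m(\tau_{\leq i}Rj_*\mathbb Z/p) = 0$ for $m > i$, and a standard hypercohomology spectral sequence / truncation argument shows that injectivity of $\H^n(\tau_{\leq i}Rj_*\mathbb Z/p) \to \H^n(N^{\geq i}/p)$ follows once we know the map $\H^{n-1}(N^{\geq i}/p) \to \H^{n-1}(\Delta^{(1)}/p)$ has vanishing cokernel contribution in the relevant range, which again uses $n-1 \leq i$ so that the truncation $\tau_{\leq i}$ does not chop anything off in degree $n-1$. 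I would combine this with the fact that $\tau_{\leq i}Rj_*\mathbb Z/p \to Rj_*\mathbb Z/p$ and the untruncated analogue of Theorem \ref{thm : exactsequencevanishing}.

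The main obstacle I anticipate is making the surjectivity of $\iota - \varphi_i^{(1)}$ on $\H^{n-1}$ fully rigorous: one must carefully track the semilinearity of $\varphi^{(1)}$ over $\Ocpflat$ (Frobenius is bijective on $\Ocpflat$, which helps) and verify that the image of the divided Frobenius $\phi_i$ on cohomology in degree $\leq i$ is all of $\phi^{-1}(d^i \H^{n-1}(\Delta/p))$ — this is where the numerical hypothesis $n \leq i+1$ does the real work, and it is essentially the degree range in which the Nygaard filtration sees the full \'etale cohomology. I would isolate this as a short sub-lemma reducing to a valuation count on the finite free $\Ocpflat$-modules involved, using that all the relevant cohomology groups are free of rank $h^{m}$ by Propositions \ref{prop : basicpropertiesprismatic} and \ref{prop : nygaard}.
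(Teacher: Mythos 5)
There is a genuine gap in your argument, and it lies precisely at the point you yourself flag as the ``main obstacle'': showing that $\iota - \varphi^{(1)}_i$ is surjective on $\H^{n-1}$. You try to prove this directly, by relating the cokernel of $\iota - \varphi^{(1)}_i$ to that of the divided Frobenius $\phi_i$ and then arguing, via Hodge--Tate weights and valuation counts, that $\phi_i$ is ``already surjective'' on $\H^{n-1}$ when $n-1\leq i$. This is not what happens, and the heuristic does not close: $\iota$ itself is injective with image $\phi^{-1}(d^i\H^{n-1}(\Delta/p))$, a \emph{proper} submodule whenever any Hodge--Tate weight is positive, so no amount of valuation-counting on $\phi_i$ alone produces the needed surjectivity of the \emph{difference}. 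Your ``alternative'' argument via concentration of $\tau_{\leq i}Rj_*\mathbb Z/p$ in degrees $\leq i$ is circular as stated (concentration of cohomology sheaves does not control hypercohomology without further input), and in any case it loops back to the same unproven surjectivity.

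The mechanism the paper actually uses is different and you never invoke it: one reduces to showing the cokernel of $\iota - \varphi^{(1)}_i$ on $\H^{n-1}$ is \emph{finite} as an abelian group, and then applies a general Artin--Schreier-type lemma (\cite[Lemma 5.32]{uinfinitytorsion}) which says that for a map of this semilinear shape between finite free $\Ocpflat$-modules, finiteness of the cokernel already forces surjectivity. Finiteness of the cokernel is then obtained from the long exact sequence of Theorem~\ref{thm : exactsequencevanishing} (the cokernel embeds into $\H^n(\tau_{\leq i}Rj_*\mathbb Z/p)$), together with the truncation triangle $\tau_{\leq i}Rj_*\mathbb Z/p\to Rj_*\mathbb Z/p\to\tau_{>i}Rj_*\mathbb Z/p$: since $\tau_{>i}Rj_*\mathbb Z/p$ lives in degrees $\geq i+1$ and $n\leq i+1$, the map $\H^n(\tau_{\leq i}Rj_*\mathbb Z/p)\to\H^n(Rj_*\mathbb Z/p)\simeq\H^n_{\et}(X,\mathbb Z/p)$ is injective, and the target is a finite-dimensional $\mathbb F_p$-vector space. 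You do correctly notice that the hypothesis $n\leq i+1$ interacts with the truncation, but you aim it at the wrong target; the hypothesis is used to bound $\H^n(\tau_{\leq i}Rj_*\mathbb Z/p)$, not to control the image of $\phi_i$ or the Hodge--Tate weights. Without the ``finite cokernel implies surjective'' step your proof does not go through.
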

\proof
By Theorem \ref{thm : exactsequencevanishing}, it is enough to show that 
$$\iota-\varphi^{(1)}_i: \H^{n-1}(N^{\geq i}/p)\rightarrow \H^{n-1}(\Delta^{(1)})$$
is surjective. 
By \cite[Lemma 5.32]{uinfinitytorsion}, it is enough to show that $\Coker\bigl(\iota-\varphi^{(1)}_i\bigr)$ is finite. 
Again by Theorem \ref{thm : exactsequencevanishing}, it is enough to show that 
$\H^{n}(\tau_{\leq i}Rj_*\mathbb Z/p)$ is finite. 
Since $\tau_{>i}Rj_*\mathbb Z/p$ is concentrated in degrees $\geq i+1$ one has $\H^{k}(\tau_{>i}Rj_*\mathbb Z/p)=0$ for $k\leq i$. 
Hence, since $n\leq i+1$, the exact triangle
$$\tau_{\leq i}Rj_*\mathbb Z/p\rightarrow Rj_*\mathbb Z/p\rightarrow \tau_{>i}Rj_*\mathbb Z/p$$
shows that the natural map
$$\H^{n}(\tau_{\leq i}Rj_*\mathbb Z/p)\rightarrow \H^{n}(Rj_*\mathbb Z/p)$$
is injective (and even an isomorphism if $n\leq i$). This concludes the proof since 
$$\H^{n}(Rj_*\mathbb Z/p)\simeq \H^{n}_{\et}(X,\Z/p)$$
is a finite dimensional $\F_p$-vector space.
\endproof
Observe that the natural map
$$s:\H^n(\Delta/p)\rightarrow \H^n(\Delta^{(1)}/p)$$
is an isomorphism, hence we can build the commutative diagram
\begin{equation}\label{eq : 1-psi1=d-psi}
	\begin{tikzcd}[column sep= small]
0\arrow{r}& \H^n(\tau_{\leq i}Rj_*\mathbb Z/p) \arrow{d}\arrow{r} & \H^n(N^{\geq i}/p)\arrow{d}{s^{-1}\circ \iota}
\arrow{r}{\iota-\varphi^{(1)}_i}& \H^n(\Delta^{(1)}/p)\arrow{d}{ d^i\circ s^{-1}}\\
0\arrow{r}& \H^n(\Delta/p)^{\varphi=d^i}\arrow{r} & \H^n(\Delta/p) 
\arrow{r}{d^i-\varphi}& \H^n(\Delta/p),
	\end{tikzcd}
\end{equation}
where the left vertical map is induced by the other two vertical ones. 
\begin{lemma}\label{lem : vanishingasd-fixedpoints}
Assume that \ref{eq : equalityhodgenumber} holds.
	Then, for $n\leq i+1$, the natural map
	$$\H^n(\tau_{\leq i}Rj_*\mathbb Z/p) \rightarrow H^n(\Delta/p)^{\varphi=d^i}$$
	is an isomorphism.
\end{lemma}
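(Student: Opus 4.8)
# Proof Proposal for Lemma \ref{lem : vanishingasd-fixedpoints}

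The plan is to deduce this from the commutative diagram \eqref{eq : 1-psi1=d-psi} together with the two preceding lemmas. First I would observe that the diagram \eqref{eq : 1-psi1=d-psi} has exact rows: the top row is exact by Lemma \ref{lem : injectivityprismatic} (which gives injectivity of the leftmost horizontal map, using the hypothesis $n\leq i+1$, and the sequence from Theorem \ref{thm : exactsequencevanishing} gives exactness at the middle term), and the bottom row is exact by definition of $\Ker(\varphi-d^i)$. The left vertical map is then the map induced on kernels, so it suffices to show that the middle and right vertical maps are isomorphisms. The middle map $s^{-1}\circ\iota$ is the composite of the Nygaard inclusion $\iota:\H^n(N^{\geq i}/p)\to\H^n(\Delta^{(1)}/p)$ with the inverse of the isomorphism $s:\H^n(\Delta/p)\to\H^n(\Delta^{(1)}/p)$; by Proposition \ref{prop : nygaard}, $\iota$ is injective with image $\phi^{-1}(d^i\H^n(\Delta/p))$. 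The right vertical map is $(\cdot d^i)\circ s^{-1}$, which need not be an isomorphism on the nose — multiplication by $d^i$ is injective but not surjective on $\H^n(\Delta/p)$.

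Since the right vertical map is not an isomorphism, I would not try to invoke the five lemma directly. Instead the cleaner route is: the left vertical map $\H^n(\tau_{\leq i}Rj_*\Z/p)\to\Ker(\varphi-d^i)$ is injective because the middle vertical map is injective (by Proposition \ref{prop : nygaard}) and the top-left horizontal map is injective (by Lemma \ref{lem : injectivityprismatic}), together with commutativity of the left square. For surjectivity, take $x\in\Ker(\varphi-d^i)\subseteq\H^n(\Delta/p)$. Then $s(x)\in\H^n(\Delta^{(1)}/p)$ satisfies $\phi(s(x)) = \varphi(x)$; wait — I should track the identifications carefully. The point is that $x\in\Ker(\varphi-d^i)$ means $\varphi(x)=d^i x$, i.e.\ $\phi(s(x))$ (the linearization of $\varphi$) lands in $d^i\H^n(\Delta/p)$, so $s(x)\in\phi^{-1}(d^i\H^n(\Delta/p)) = \Image(\iota)$ by Proposition \ref{prop : nygaard}. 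Hence there is a unique $y\in\H^n(N^{\geq i}/p)$ with $\iota(y)=s(x)$, i.e.\ with $(s^{-1}\circ\iota)(y) = x$. It remains to check $y$ lies in $\H^n(\tau_{\leq i}Rj_*\Z/p)$, i.e.\ that $(\iota-\varphi_i^{(1)})(y)=0$; by commutativity of the right square this equals (up to the identification) $(d^i-\varphi)(x) = 0$, but here I must be careful that the right vertical map is injective (it is, being $d^i$ times an iso) so that $(\iota-\varphi_i^{(1)})(y)=0$ genuinely follows from $(d^i-\varphi)(x)=0$ — this uses injectivity of multiplication by $d^i$ on $\H^n(\Delta/p)$, which holds since $\H^n(\Delta/p)$ is free over the valuation ring $\Ocpflat$ by Proposition \ref{prop : basicpropertiesprismatic}.

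The main obstacle, and the place where assumption \ref{eq : equalityhodgenumber} really enters, is in guaranteeing the freeness statements of Proposition \ref{prop : basicpropertiesprismatic} and the image computation of Proposition \ref{prop : nygaard}: the surjectivity argument above relies on $\Image(\iota) = \phi^{-1}(d^i\H^n(\Delta/p))$ \emph{exactly}, and injectivity of the left vertical map relies on $\iota$ being injective on $\H^n(N^{\geq i}/p)$ — both of which can fail without control of torsion, which is precisely what \ref{eq : equalityhodgenumber} provides. Everything else is a diagram chase in \eqref{eq : 1-psi1=d-psi}. So concretely the steps are: (1) record that both rows of \eqref{eq : 1-psi1=d-psi} are exact, citing Lemma \ref{lem : injectivityprismatic} and Theorem \ref{thm : exactsequencevanishing} for the top and the definition for the bottom; (2) note the middle vertical map is injective with image $\phi^{-1}(d^i\H^n(\Delta/p))$ via Proposition \ref{prop : nygaard}, and that multiplication by $d^i$ on $\H^n(\Delta/p)$ is injective via Proposition \ref{prop : basicpropertiesprismatic}; (3) chase the diagram to get injectivity of the induced map on kernels; (4) chase the diagram to get surjectivity, producing the preimage $y$ from $\Image(\iota) = \phi^{-1}(d^i\H^n(\Delta/p))$ and checking it lies in the kernel subgroup using injectivity of the right vertical map.
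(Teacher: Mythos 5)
Your proof is correct and follows exactly the same route as the paper's: exactness of the rows of \eqref{eq : 1-psi1=d-psi} via Lemma~\ref{lem : injectivityprismatic} and Theorem~\ref{thm : exactsequencevanishing}, injectivity of the two right vertical maps via Propositions~\ref{prop : basicpropertiesprismatic} and~\ref{prop : nygaard}, and the identification $\H^n(N^{\geq i}/p)=\phi^{-1}(d^i\H^n(\Delta/p))$ from Proposition~\ref{prop : nygaard}. The paper states these three inputs and declares the conclusion to follow; you have simply spelled out the diagram chase (which is correct), so the two arguments are essentially identical.
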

\proof
By Lemma \ref{lem : injectivityprismatic} and Theorem \ref{thm : exactsequencevanishing}, the rows of the diagram (\ref{eq : 1-psi1=d-psi}) are exact. By Propositions \ref{prop : basicpropertiesprismatic} and \ref{prop : nygaard}, the two right vertical arrows are injective. So the conclusion follows from the fact that 
$$\H^n(N^{\geq i}/p)=\phi^{-1}(d^i\H^n(\Delta/p)),$$
again by Proposition \ref{prop : nygaard}.  
\endproof
\subsubsection{Proof of Theorem \ref{thm : prismaticinterpretationkernel}}
By construction, the edge map 
$$\H^n(X_{\mathbb C_p},\Z/p)\rightarrow  \H^0(\mathcal X_{\Ocp},R^nj_*\Z/p),$$
in the Leray spectral sequence is induced by the exact triangle 
$$\tau_{\leq n-1}(Rj_*\Z/p)\rightarrow Rj_*\Z/p\rightarrow \tau_{>n-1}(Rj_*\Z/p),$$
observing that $\H^n(\mathcal X_{\Ocp},\tau_{> {n-1}}(Rj_*\Z/p))\subseteq \H^0(\mathcal X_{\Ocp}, R^nj_*\Z/p).$ 
Hence, 
$$\Ker(\H^n(X_{\mathbb C_p},\Z/p)\rightarrow  \H^0(\mathcal X_{\Ocp},R^nj_*\Z/p))=\H^n(\mathcal X_{\Ocp},\tau_{\leq {n-1}}(Rj_*\Z/p)),$$
and the conclusion follows from Corollary~\ref{cor : Brauervanishing} and Lemma~\ref{lem : vanishingasd-fixedpoints}. 
\begin{example}\label{ex : fixed elliptic curve}
    Assume that $X$ is an elliptic curve. Then Lemma \ref{lem : vanishingasd-fixedpoints} for $i=n=1$ implies that 
    $$\H^1(\Delta/p)^{\varphi=d}\simeq \H^1(\tau_{\leq 1}Rj_*\mathbb Z/p)\simeq \H^1(X,\Z/p)\simeq (\Z/p)^2$$
    has $\mathbb F_p$-dimension equal to 2. 
\end{example}
\section{Total Newton slope and Hodge polygons}\label{sec : totalslope}
In this section we prove Theorem \ref{thm : HodgevsNewton}. We start in Section \ref{sec: Newton and Hodge} by defining and studying the total Newton slope and the Hodge polygon for abstract finite free modules over $\Ocpflat$ endowed with a Frobenius, taking inspiration from \cite{Katzslope}. Almost by construction, the total Newton slope will be at least the height of the Hodge polygon; in fact, in Lemma~\ref{lem : hodgevsnewton} we show that they are equal. Then in Section \ref{subsec : geometric Hodge polygon} we specialise to the geometric setting and control this height via the Hodge numbers of $X$, proving Proposition~\ref{prop : hodgevsgeometrichodge}, which in turn implies Theorem \ref{thm : HodgevsNewton}. 
\subsection{Total Newton slope and Hodge polygon}\label{sec: Newton and Hodge}
For this section, let $M$ be a finite free $\mathcal O_{{\mathbb{C}_p^{\flat}}}$-module of rank $r\in \Z_{\geq 1}$ equipped with a Frobenius semi-linear map $\varphi$ that becomes an isomorphism after inverting $d$. If $M$ and $N$ are such modules, we write $\Hom_{\varphi}(M,N)$ for the $\mathbb F_p$ vector space of morphisms as $\mathcal O_{{\mathbb{C}_p^{\flat}}}$-modules that commute with the Frobenius.

Recall that, by \cite[Lemma 4.13, p.~128]{Milne}, for every integer $i$, 
$$M^{\varphi=d^i}:=\Ker(\varphi-d^i:M\rightarrow M)$$
is an $\mathbb F_p$-vector space of rank at most $r$. If $\mathcal B:={e_1,\dots, e_r}$ is a basis of $M$, we write $M_{\mathcal B}(\varphi)\in M_{r}(\Ocpflat)$ for the matrix whose $i^{th}$-column is the coordinates vector of $\varphi(e_i)$ with respect to $\mathcal B$. Furthermore, we choose a compatible system $\{d^{1/r}\}_{r\in \mathbb N}$ of roots of $d$.
\subsubsection{Total Newton slope}\label{subsec : totalnewton}
\begin{definition}\label{def : twist}
	Given a rational number $\alpha\geq 0$, we denote by $\mathcal O (-\alpha)$ the rank one $\Os_{\mathbb{C}_p^\flat}$-module 
		\[
		\Os(-\alpha)=\Os_{\mathbb{C}_p^\flat}\cdot e
		\]
		with Frobenius $\varphi$ given by $\varphi(e)=d^{\alpha}e$.
\end{definition}
By construction, one has $\Os(-\alpha)\otimes \Os(-\beta)\simeq \Os(-\alpha-\beta)$. 
These $\Os(-\alpha)$ are all the possible rank $1$ modules with Frobenius, as the following lemma shows. 
\begin{lemma}\label{lem : rank1}
	If $r=1$, then $M\simeq \Os(-\alpha)$ for a unique $\alpha\in \mathbb Q_{\geq 0}$.
\end{lemma}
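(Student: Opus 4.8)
The plan is to prove that a rank-one finite free $\Ocpflat$-module $M$ with Frobenius-semilinear $\varphi$ becoming an isomorphism after inverting $d$ is isomorphic to exactly one $\Os(-\alpha)$ with $\alpha\in\Q_{\geq 0}$.

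\textbf{Existence.} First I would pick a generator $e$ of $M$, so $\varphi(e)=ce$ for some $c\in\Ocpflat$; since $\varphi$ becomes an isomorphism after inverting $d$, $c$ is nonzero, and since $\varphi$ maps into $M$ (not just $M[1/d]$), we have $c\in\Ocpflat$. Because $\Ocpflat$ is a valuation ring whose value group is $\Q$ (with $v(d)=1$), set $\alpha:=v(c)\in\Q_{\geq 0}$. Using the chosen compatible system $\{d^{1/r}\}$, we have $v(c)=v(d^{\alpha})$, hence $c=u\,d^{\alpha}$ for a unit $u\in\Ocpflat^{\times}$. The goal is to change the generator to absorb $u$: seek $e'=\lambda e$ with $\lambda\in\Ocpflat^{\times}$ such that $\varphi(e')=d^{\alpha}e'$, i.e. $\varphi(\lambda)\,c\,\lambda^{-1}=d^{\alpha}$, equivalently $\varphi(\lambda)/\lambda=u^{-1}$. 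So existence reduces to solvability of $\varphi(\lambda)=u^{-1}\lambda$ in $\Ocpflat^{\times}$; since $\varphi$ on $\Ocpflat$ is the Frobenius $x\mapsto x^p$, this is the equation $\lambda^{p-1}=u^{-1}$, which has a solution in $\Ocpflat^{\times}$ because $\Ocpflat$ is an integral perfectoid valuation ring with algebraically closed fraction field (it is the tilt of $\Ocp=\widehat{\overline{\Q}}_p$, so $\Frac(\Ocpflat)$ is algebraically closed and the valuation ring is integrally closed): any $(p-1)$-st root of the unit $u^{-1}$ lies in $\Ocpflat$ and is again a unit. This gives $M\simeq\Os(-\alpha)$.

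\textbf{Uniqueness.} Suppose $\Os(-\alpha)\simeq\Os(-\beta)$ as $\Ocpflat$-modules with Frobenius. An isomorphism is multiplication by some unit $\mu\in\Ocpflat^{\times}$ intertwining the Frobenii, giving $d^{\alpha}\mu=\varphi(\mu)d^{\beta}$ (matching $\varphi(\mu e_{\beta})$ on both sides). Taking $v$ and using that $\mu$ is a unit so $v(\mu)=v(\varphi(\mu))=0$, we get $\alpha=\beta$. Hence $\alpha$ is uniquely determined by $M$ (indeed $\alpha=v(\det)$ of $\varphi$ in any basis, which is basis-independent up to the unit determinant of a change of basis matrix).

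\textbf{Main obstacle.} The only non-formal point is solving $\varphi(\lambda)=u^{-1}\lambda$, i.e. extracting a $(p-1)$-st root of a unit inside $\Ocpflat$ and checking it remains a unit; everything else is bookkeeping with the rank-one valuation $v$. This is where one uses that $\Frac(\Ocpflat)$ is algebraically closed and $\Ocpflat$ is integrally closed in it, so the root — a priori only in $\Frac(\Ocpflat)$ — lies in $\Ocpflat$, and since its valuation is $0$ it is a unit. I expect the authors' proof to invoke exactly this perfectoid/valuation-theoretic input, possibly citing the structure of $\Ocpflat$ recalled in Section~\ref{subsec : review}.
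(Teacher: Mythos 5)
Your proof is correct and takes essentially the same route as the paper's: pick a generator, factor the Frobenius eigenvalue as $u\,d^{\alpha}$ with $u$ a unit, rescale the generator by a $(p-1)$-st root of $u^{-1}$ to normalize the eigenvalue to $d^{\alpha}$, and deduce uniqueness by a valuation count on a Frobenius-intertwining unit. The paper is terser (it just writes $e/\mu^{1/(p-1)}$ without remark, and for uniqueness invokes $\Hom_{\varphi}(\Os(-\alpha),\Os(-\beta))\neq 0\Leftrightarrow\alpha\geq\beta$), but the content is identical; your explicit justification that the $(p-1)$-st root of a unit lies in $\Ocpflat^{\times}$ is the same point the paper takes for granted.
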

\proof
Let $x\in M$ be a generator and write $\varphi(x)=\lambda x$. Since $\Ocpflat$ is a valuation ring whose maximal ideal is generated by $d^{\alpha}$ for all $\alpha\in \Q_{> 0}$, there exist $\alpha\in \Q_{\geq 0}$ and $\mu\in \Ocpflat^*$ such that $\lambda=d^{\alpha}\mu$. Then $x:=e/\mu^{\frac{1}{p-1}}$ is a generator of $M$ such that $\varphi(x)=d^{\alpha}x$, so that $M\simeq \Os(-\alpha)$. For uniqueness, it is enough to observe that
$\Hom_{\varphi}(\Os(-\alpha),\Os(-\beta))\neq 0$ if and only if $\alpha\geq \beta$. 
\endproof

\begin{proposition}\label{prop : existence filtration}
\begin{enumerate}
\item[]
	\item 	There exists a Frobenius-stable decreasing filtration $F^{\bullet}$ of $M$ such that $$\Gr_i(F^{\bullet}):=F^i/F^{i+1}\simeq \mathcal O(-\alpha_{i})$$ for some $\alpha_i\in \Q_{\geq 0}$.
	\item If $F^{\bullet}$ and $G^{\bullet}$ are two  Frobenius-stable decreasing filtrations such that 
	$$\Gr_i(F^{\bullet})\simeq \mathcal O(-\alpha_{i})\quad \text{and}\quad \Gr_i(G^{\bullet})\simeq \mathcal O(-\beta_{i})$$ for some $\alpha_i,\beta_i\in \Q_{\geq 0}$, then 
	$$\sum_i \alpha_i=\sum_i \beta_i.$$
\end{enumerate}
\end{proposition}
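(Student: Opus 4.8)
The plan is to prove (1) by induction on the rank $r$ of $M$, and then to deduce (2) by recognising $\sum_i\alpha_i$ as a basis-independent invariant.

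For (1), the base case $r=1$ is exactly Lemma~\ref{lem : rank1}. For the inductive step assume $r\geq 2$ and set $V:=M[1/d]$, a $\mathbb{C}_p^{\flat}$-vector space of dimension $r$ equipped with the $\varphi_{\mathbb{C}_p^{\flat}}$-semilinear automorphism induced by $\varphi$. Since $\mathbb{C}_p^{\flat}$ is algebraically closed of characteristic $p$, a finite $\varphi$-module over it on which Frobenius acts bijectively is trivial, i.e. admits a basis of $\varphi$-fixed vectors (a Lang-type statement for semilinear algebra; in rank one it is just the solvability of $\lambda c^p=c$, $c\neq 0$). In particular there is a nonzero $w\in V$ with $\varphi(w)=w$; let $L:=\mathbb{C}_p^{\flat}\cdot w$ and $N:=L\cap M$. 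Then $N$ is $\varphi$-stable (since $\varphi(M)\subseteq M$ and $\varphi(L)=L$), it has rank one, and $M/N$ is torsion-free. As $\Ocpflat$ is a valuation ring, $N$ is generated by a vector one of whose coordinates, in a basis of $M$, is a unit; hence $N$ is a free direct summand of $M$ and $M/N$ is free of rank $r-1$. The Frobenius restricted to $N$ and the one induced on $M/N$ still become isomorphisms after inverting $d$ (count dimensions in $L$ and $V/L$), so Lemma~\ref{lem : rank1} gives $N\simeq\Os(-\alpha)$ and the inductive hypothesis gives a Frobenius-stable decreasing filtration of $M/N$ with rank-one graded pieces $\Os(-\alpha_i)$. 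Pulling this filtration back along $M\twoheadrightarrow M/N$ and inserting $N$ as the bottom step yields the filtration of $M$ required in (1).

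For (2), fix a basis $\mathcal B$ of $M$ and set $h(M):=v(\det M_{\mathcal B}(\varphi))$; this makes sense because $\det M_{\mathcal B}(\varphi)\neq 0$, as $\varphi$ is an isomorphism after inverting $d$. First I would check that $h(M)$ is independent of $\mathcal B$: if $\mathcal B'$ is obtained from $\mathcal B$ by a change-of-basis matrix $P\in\mathrm{GL}_r(\Ocpflat)$, semilinearity of $\varphi$ gives $M_{\mathcal B'}(\varphi)=P^{-1}\,M_{\mathcal B}(\varphi)\,\varphi(P)$ with $\varphi(P)$ the entrywise Frobenius, and since $\det\varphi(P)=(\det P)^{p}$ and $\det P\in\Ocpflat^{\times}$ the determinant changes by the unit $(\det P)^{p-1}$, so $v(\det)$ is unchanged. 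Next, given any filtration $F^{\bullet}$ as in (1), each $F^{i}$ is free (an extension of a free module by a free module over a local ring splits), so one can pick a basis $\mathcal B=\{e_1,\dots,e_r\}$ of $M$ adapted to $F^{\bullet}$, with $\{e_{i+1},\dots,e_r\}$ a basis of $F^{i}$. Frobenius-stability of the filtration forces $M_{\mathcal B}(\varphi)$ to be triangular, and the isomorphism $\Gr_i(F^{\bullet})\simeq\Os(-\alpha_i)$ shows its $i$-th diagonal entry is $u_i d^{\alpha_i}$ with $u_i\in\Ocpflat^{\times}$; hence $v(\det M_{\mathcal B}(\varphi))=\sum_i\alpha_i$. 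Thus $\sum_i\alpha_i=h(M)$ for every filtration as in (1), so in particular $\sum_i\alpha_i=\sum_i\beta_i$, proving (2).

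The bookkeeping over the valuation ring $\Ocpflat$ (the saturation of a $\varphi$-stable submodule is again $\varphi$-stable; a saturated rank-one submodule of a free module is a free direct summand; an extension of free modules is free) is routine. The one genuinely nontrivial input is the triviality, over the algebraically closed field $\mathbb{C}_p^{\flat}$, of $\varphi$-modules on which Frobenius is bijective — equivalently, the existence of a nonzero $\varphi$-fixed vector in $V$ — and this is the step I would expect to require the most care and an appropriate reference. Part (2) is then formal once one recognises $\sum_i\alpha_i$ as the manifestly filtration-independent quantity $h(M)=v(\det\varphi)$, which also makes transparent the relation to the Hodge polygon exploited in the rest of this section.
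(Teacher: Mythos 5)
Your proof is correct and rests on the same inputs as the paper's: for part (1), the existence of $\varphi$-fixed vectors in $M[1/d]$ (the Lang-type statement, invoked in the paper via Lemma~\ref{lem : notemptiness} and \cite[Lemma 4.13, p.~128]{Milne}); for part (2), the basis-independence of the $d$-adic valuation of $\det M_{\mathcal B}(\varphi)$. The one worthwhile difference is in part (1): the paper picks among $x\in M\setminus\mathfrak m^\flat M$ satisfying $\varphi(x)=d^i x$ one with $i$ \emph{minimal}, and then proves $M/\langle x\rangle$ torsion-free by a short computation exploiting that minimality, whereas you pass directly to the saturation $N=L\cap M$ of the line $L$ through a $\varphi$-fixed vector, so torsion-freeness of $M/N$ is automatic and no extremal choice is needed; the residual point, which you handle correctly, is that $N$ is finitely generated and hence free, seen by normalising the generator of $L$ so that one coordinate in a basis of $M$ is a unit. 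This is a modest but genuine simplification of the inductive step. For part (2), your identification $\sum_i\alpha_i=v(\det M_{\mathcal B}(\varphi))$ is exactly the content of the paper's Remark~\ref{rem : matricesnewton}; the paper's actual proof of (2) encodes the same fact through the Frobenius-equivariant isomorphism $\bigotimes_i\Gr_i(F^\bullet)\simeq\wedge^r M$ together with the uniqueness in Lemma~\ref{lem : rank1}, which is just the exterior-power packaging of the determinant, so the two formulations are equivalent; yours has the minor advantage of making the change-of-basis invariance explicit.
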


\begin{proof}
\begin{enumerate}
	\item[] 
	\item We prove this by induction on $r$. The case $r=1$ is Lemma \ref{lem : rank1}, so assume $r>1$. By Lemma \ref{lem : notemptiness} below, the set  
\[
	\mathcal{E}(M,\varphi):=\{x\in M \mid \exists \text{ }i\in \Q_{\geq 0}\text{ such that } \varphi(x)=d^i x \text{ and }x\notin \mathfrak{m}^\flat M\}
	\]
	is finite and non-empty. In particular, there exists a minimal $i\in \Q_{\geq 0}$ such that there exists an $x\in M\setminus \mathfrak{m}^\flat M$ with $\varphi(x)=d^ix$. 
	Fix such an $x$; then the $\mathcal{O}_{\mathbb C_p^\flat}$-module $\langle x\rangle$ generated by $x$ is isomorphic to $\mathcal O(-i)$. Thus, by induction it is enough to show that $M/\langle x\rangle$ is torsion-free (hence free, since $\Ocpflat$ is a valuation ring).
	It is enough to show that if $y\in M$ and $j\in \Q_{\geq 0}$ are such that $d^{j}y=\lambda x$ for some $\lambda\in \Ocpflat$, then $a:=v_d(\lambda)\geq j$. Upon rescaling $j$, we can assume that $y\in M\setminus \mathfrak{m}^\flat M$, and upon multiplying it by a unit, we can assume that $\lambda=d^a$. Then we have
	$$d^{pj}\varphi(y)=\varphi(d^jy)=\varphi(\lambda x)=\lambda^pd^{i}x=\lambda^{p-1}d^{i+j}y=d^{a(p-1)+i+j}y,$$
	hence $\varphi(y)=d^{a(p-1)+i-j(p-1)}y$. By the minimality of $i$ and since $y\in M\setminus \mathfrak{m}^\flat M$, we get $a(p-1)+i-j(p-1)\geq i$, hence $a\geq j$.
	\item Observe that 
	$$\mathcal O\left(-\sum^r_{i=1}\alpha_i\right)\simeq \otimes^r_{i=1} \Gr^i(F^{\bullet})\simeq \wedge^rM\simeq \otimes^r_{i=1} \Gr^i(G^{\bullet})\simeq \mathcal O\left(-\sum^r_{i=1}\beta_i\right),$$
	so the conclusion follows from Lemma \ref{lem : rank1}. \qedhere
\end{enumerate}
\end{proof}
\begin{lemma}\label{lem : notemptiness}
	The set  
\[
	\mathcal{E}(M,\varphi):=\{x\in M \mid \exists \text{ }i\in \Q_{\geq 0}\text{ such that } \varphi(x)=d^i x \text{ and }x\notin \mathfrak{m}^\flat M\}
\]
	is finite and non-empty.
\end{lemma}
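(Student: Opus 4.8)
I would reduce both assertions to statements about the $\mathbb{C}_p^{\flat}$-vector space $V:=M[1/d]$, on which $\varphi$ acts Frobenius-semilinearly and — because it becomes an isomorphism after inverting $d$ — bijectively. The bridge is the following dictionary. If $x\in V\setminus\{0\}$ spans a $\varphi$-stable line, then $M\cap\mathbb{C}_p^{\flat}x$ is a $\varphi$-stable, saturated, rank-one $\Ocpflat$-submodule of $M$, hence is isomorphic to $\mathcal O(-a)$ for a unique $a\in\mathbb{Q}_{\geq 0}$ by Lemma~\ref{lem : rank1}, and a generator of it rescaled by a suitable $(p-1)$-st root of a unit lies in $\mathcal E(M,\varphi)$; conversely every element of $\mathcal E(M,\varphi)$ spans such a line. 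So ``$\mathcal E(M,\varphi)\neq\emptyset$'' becomes ``$\varphi$ has an eigenvector in $V$'', and finiteness becomes ``there are finitely many $\varphi$-stable lines carrying eigenvectors, each carrying finitely many primitive ones''.

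For finiteness: each $M^{\varphi=d^i}$ is an $\mathbb{F}_p$-vector space of dimension $\leq r$ — equivalently, $\mathbb{F}_p$-independent elements of it are $\mathbb{C}_p^{\flat}$-independent — by the standard trick of taking a shortest nontrivial $\mathbb{C}_p^{\flat}$-linear relation, applying $\varphi$, subtracting to shorten it, and concluding the coefficients lie in $\mathbb{F}_p$. Running the same trick on primitive eigenvectors with \emph{distinct} eigenvalues $d^{i_1},\dots,d^{i_m}$ shows they too are $\mathbb{C}_p^{\flat}$-independent: the one situation in which the relation cannot be shortened is that the coefficients $c_\ell$ of a shortest relation satisfy $c_\ell^{\,p-1}d^{i_\ell}=\nu$ for some fixed $\nu$, so $v_d(c_\ell)$ decreases strictly with $i_\ell$; dividing the relation by the coefficient of smallest valuation — the unique $\ell$ with $i_\ell$ maximal — then writes that primitive eigenvector as an element of $\mathfrak m^{\flat}M$, which is absurd. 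Hence $m\leq\dim_{\mathbb{C}_p^{\flat}}V=r$, only finitely many eigenvalues occur, and $\mathcal E(M,\varphi)=\bigsqcup_i\bigl(M^{\varphi=d^i}\setminus\mathfrak m^{\flat}M\bigr)$ is a finite union of finite sets.

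For non-emptiness I would use that $\mathbb{C}_p^{\flat}$ is algebraically closed and that Frobenius is additive in characteristic $p$. Choose a basis of $M$ and let $A\in\mathrm{GL}_r(\mathbb{C}_p^{\flat})$ be the matrix of $\varphi$; then the self-map $\psi:=A\circ\varphi-\mathrm{id}$ of $\mathbb{A}^r_{\mathbb{C}_p^{\flat}}$ (with $\varphi$ the coordinatewise $p$-power) is a homomorphism of group schemes whose top-degree part $v\mapsto A\varphi(v)$ vanishes only at the origin, so a standard computation with its projective closure shows $\psi$ is finite flat of degree $p^r$; its differential at $0$ equals $-\mathrm{Id}$ because the Frobenius part has vanishing differential, so $0$ is a reduced, length-one point of the length-$p^r$ scheme $\psi^{-1}(0)$. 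Since $p^r\geq 2$ this scheme has a further point, necessarily $\mathbb{C}_p^{\flat}$-rational, i.e.\ a vector $x\in V\setminus\{0\}$ with $\varphi(x)=x$; rescaling $x$ to a primitive element $y$ of $M$ and using $\varphi(M)\subseteq M$ forces $\varphi(y)=d^{i}y$ with $i:=(1-p)v_d(x)\geq 0$, so $y\in\mathcal E(M,\varphi)$. (Alternatively one can induct on $r$, at each step either splitting off a proper $\varphi$-stable subspace or reducing to a cyclic vector, where the fixed-vector equation becomes a one-variable polynomial equation with a simple root at $0$, hence with further roots.)

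The rest is routine semilinear algebra over a valuation ring; the genuine obstacle is this existence step, since $\Ocpflat$ is not discretely valued and Dieudonné–Manin theory does not apply, so one must pull an eigenvector out of the geometry of Frobenius by hand.
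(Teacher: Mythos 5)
Your proof is correct, but it takes a different route from the paper's, which is a one-step reduction to a cited fact. The paper first invokes Milne's Lemma~4.13 --- the Lang/Artin--Schreier descent statement that $M[1/d]^{\varphi=1}$ is an $\mathbb{F}_p$-vector space of rank $r$ over the algebraically closed field $\mathbb{C}_p^\flat$ --- and then shows that $x\mapsto x/d^{i/(p-1)}$ (where $\varphi(x)=d^i x$) gives a bijection from $\mathcal{E}(M,\varphi)$ onto the nonzero elements of $M[1/d]^{\varphi=1}$; both finiteness and non-emptiness, and even the exact count $p^r-1$, then follow at once. You instead decouple the two claims. Your finiteness argument --- primitive eigenvectors with distinct twists are $\mathbb{C}_p^\flat$-independent by the valuation analysis of a shortest relation, so at most $r$ twists $d^i$ occur and each $M^{\varphi=d^i}$ is finite --- is a genuine and more elementary alternative that avoids Milne's lemma entirely, though it loses the exact cardinality. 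Your non-emptiness argument, by contrast, essentially unwinds rather than replaces that citation: the finite-flat degree-$p^r$ computation for the additive self-map $v\mapsto A\varphi(v)-v$ of $\mathbb{A}^r_{\mathbb{C}_p^\flat}$ (or, inductively, for a one-variable additive polynomial) is precisely the Lang-type argument behind Milne's lemma. Both proofs are sound; the paper's is shorter, yours is self-contained. One small point worth spelling out at the end of yours: the bound $i=(1-p)v_d(x)\geq 0$ is forced by $\varphi(M)\subseteq M$ (if $i<0$ and $y$ is primitive then $\varphi(y)=d^{i}y\notin M$), which is what rules out $v_d(x)>0$.
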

\proof
Recall that by \cite[Lemma 4.13, p.128]{Milne}, $M\left[\frac{1}{d}\right]^{\varphi=1}$ is an $\mathbb F_p$-vector space of dimension $r$, in particular it is finite and non-empty. 
Hence, to prove the lemma it is enough to construct a bijection
			$$f:\mathcal{E}(M,\varphi) \rightarrow M\left[\frac{1}{d}\right]^{\varphi=1}.$$
			For $x\in \mathcal{E}(M,\varphi)$, we define $f(x):=\frac{x}{d^{i/(p-1)}}$, where $i\in \mathbb Q_{\geq 0}$ is the unique rational such that $\varphi(x)=d^{i}x$. Since the map is well defined and surjective by definition, we just need to show injectivity. 		
        If $x,y\in \mathcal{E}(M,\varphi)$ map to the same element, then 
		\[
		\frac{x}{d^{i/(p-1)}}=\frac{y}{d^{j/(p-1)}} \quad \text{hence}\quad  d^{j/(p-1)} x= d^{i/(p-1)} y.
		\]
		If $i=j$, we immediately get $x=y$. If $i\ne j$, then without loss of generality we can assume $i>j$ and hence 
		\[
		x=d^{(i-j)/(p-1)} y.
		\]
		This implies $x\in \mathfrak{m}^\flat M$, which is a contradiction. \endproof

  \begin{definition}
	The total slope of $M$ is $$\TS (M):=\sum\alpha_i,$$ where the $\alpha_i\in \Q_{\geq 0}$ are the ones appearing in the graded quotients for any Frobenius-stable decreasing filtration $F^{\bullet}$ of $M$ such that $\Gr_i(F^{\bullet})\simeq \mathcal O(-\alpha_{i})$ for some $\alpha_i\in \Q_{\geq 0}$. The total slope is well defined thanks to Proposition \ref{prop : existence filtration}(2). 
\end{definition}

\begin{example}\label{ex : slope}
By contrast, the set of ``slopes'', i.e.\ the set of $\alpha_i$ appearing in the filtration, is not well defined, contrary to what one could expect by analogy with the crystalline situation (see e.g.\ \cite{Katzslope}), as the following example shows. To simplify the computation, we assume that $p=2$. 
Let $M=\Ocpflat\oplus \Ocpflat$ be endowed with a Frobenius whose matrix with respect to the standard basis $\mathcal B$ of $M$ is given by 
		$$M_{\mathcal B}(\varphi)=\begin{bmatrix}
    d^2    & 1\\
    0      & d^{1/2} \\
\end{bmatrix}.$$
With respect to the filtration induced by $\mathcal B$, the ``slopes'' are $1/2$ and $2$.
On the other hand, by Hensel's lemma, there exists $x\in \Ocpflat$ such that $dx^2+x+1=0$ and such that the reduction of $x$ modulo $d$ is $1$. In particular, $x$ is a unit. Hence, the elements $(x,d^{1/2})$ and $(0,1)$ form a new basis $\mathcal C$ of $M$. One computes that 
	$$M_{\mathcal C}(\varphi)=\begin{bmatrix}
    d    & x^{-1}\\
    0      & xd^{3/2} \\
\end{bmatrix},$$
so that with respect to the filtration induced by $\mathcal C$, the ``slopes'' are $1$ and $3/2$. Hence, the set of ``slopes'' is not well defined, as claimed.  As an additional remark, observe that in the first filtration the ``slopes'' are increasing, while in the second one they are decreasing (and neither of the filtrations splits). Thus, even the order of the ``slopes'' is not well defined. 
\end{example}

\begin{remark}\label{rem : matricesnewton}
	The existence of a filtration as in Proposition \ref{prop : existence filtration} is equivalent to the existence of a basis $\mathcal B$ of $M$ such that the matrix $M_{\mathcal B}(\varphi)$ is upper triangular with $d^{\alpha_i}$ on the diagonal. In particular, $\TS (M)$ is the $d$-adic valuation of the determinant of $M_{\mathcal C}(\varphi)$ for any basis $\mathcal C$ of $M$.
\end{remark} 

    \subsubsection{Hodge polygon}
Since $M/\varphi(M)$ is finitely presented and  $\Os_{\mathbb{C}_p^\flat}$ is a valuation ring whose maximal ideal is generated by $(d^{a})_{a\in \Q_{> 0}}$, by \text{\cite[\href{https://stacks.math.columbia.edu/tag/0ASP}{Tag 0ASP}]{stacks-project}},  we have 
	\begin{equation}\label{eq : definition of betai}
		\frac{M}{\varphi(M)}\simeq \bigoplus_i \frac{\Os_{\mathbb{C}_p^\flat}}{d^{\beta_i}}
			\end{equation}
	for unique $\beta_i\in \mathbb{Q}$ and $0\leq  \beta_1\leq \beta_2\leq \dots \leq \beta_r$. We call $\{\beta_i\}$ the Hodge slopes of $M$. 
	\begin{remark}\label{rem : matricesnhodge}
		By \cite[\href{https://stacks.math.columbia.edu/tag/0AST}{Tag 0AST}]{stacks-project}, there are two bases $\mathcal B$, $\mathcal B'$ of $M$ such that the matrix of $\varphi$ associated to $\mathcal B$, $\mathcal B'$ is diagonal with entries $d^{\beta_1},\dots,d^{\beta_r}$. In particular, one has 
$$\sum_{i=1}^r\beta_i=v_d(\Det(M_{\mathcal C}(\varphi))),$$
for any basis $\mathcal C$ of $M$. 
	\end{remark}
	\begin{definition}
	Let $P_0:=(0,0)\in \mathbb R^2$ and for $1\leq j\leq r$, let 
	$$P_j:=(j,\sum_{i=1}^j \beta_i)\in \mathbb R^2$$
		We define the \emph{Hodge polygon} of $M$ to be the union, for $0\leq j\leq r-1$, of the segments that join $P_j$ and $P_{j+1}$. The \emph{height} $h(M)$ of the Hodge polygon of $M$ is the height of the end point, that is, the sum of all $\beta_i$'s. 
			\end{definition}
		\begin{lemma}\label{lem : hodgevsnewton}
			The total slope of $M$ is equal to the height of the Hodge polygon of $M$. 
		\end{lemma}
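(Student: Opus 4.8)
The plan is to identify both the total slope $\TS(M)$ and the height $h(M)$ of the Hodge polygon with the single number $v_d(\Det(M_{\mathcal C}(\varphi)))$, computed with respect to any basis $\mathcal C$ of $M$; the lemma is then immediate.

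The first thing to check is that $v_d(\Det(M_{\mathcal C}(\varphi)))$ does not depend on the choice of basis $\mathcal C$, despite $\varphi$ being only $\varphi_{\Ocpflat}$-semilinear. If $\mathcal C$ and $\mathcal C'$ are two bases of $M$ related by a matrix $P\in\mathrm{GL}_r(\Ocpflat)$, then unwinding the semilinearity of $\varphi$ gives $M_{\mathcal C'}(\varphi)=P^{-1}M_{\mathcal C}(\varphi)\,\varphi(P)$, where $\varphi(P)$ is obtained by applying the Frobenius of $\Ocpflat$ to each entry of $P$. Taking determinants, $\Det(M_{\mathcal C'}(\varphi))=\Det(P)^{-1}\,\varphi(\Det(P))\,\Det(M_{\mathcal C}(\varphi))$; since $\Det(P)\in\Ocpflat^*$ and the Frobenius of $\Ocpflat$ preserves units, the correction factor $\Det(P)^{-1}\varphi(\Det(P))$ is a unit and therefore has $v_d$ equal to $0$. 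Hence $v_d(\Det(M_{\mathcal C}(\varphi)))$ is a well-defined invariant of $(M,\varphi)$; the same computation, allowing independent changes on the source and the target, shows it is unchanged even if one uses a pair of distinct bases to represent $\varphi$.

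Granting the two Remarks, the rest is bookkeeping. By Remark \ref{rem : matricesnewton} there is a basis $\mathcal B$ for which $M_{\mathcal B}(\varphi)$ is upper triangular with diagonal entries $d^{\alpha_1},\dots,d^{\alpha_r}$, where the $\alpha_i$ are the slopes of the associated Frobenius-stable filtration; so $v_d(\Det(M_{\mathcal B}(\varphi)))=\sum_i\alpha_i=\TS(M)$. By Remark \ref{rem : matricesnhodge} there are bases with respect to which $\varphi$ is represented by the diagonal matrix with entries $d^{\beta_1},\dots,d^{\beta_r}$; expressing the target basis in terms of the source one, this matrix differs from some $M_{\mathcal B}(\varphi)$ by left multiplication by a matrix in $\mathrm{GL}_r(\Ocpflat)$, so $v_d(\Det(M_{\mathcal B}(\varphi)))=\sum_i\beta_i=h(M)$. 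Comparing the two computations gives $\TS(M)=h(M)$.

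The proof has essentially no moving parts beyond the two Remarks, and the one step I would expect a reader to want made explicit is the semilinearity point above: the determinant of a matrix representing a semilinear map changes under a change of basis only by a factor of the form $u^{-1}\varphi(u)$ with $u$ a unit, and it is exactly this that makes $v_d(\Det(M_{\mathcal C}(\varphi)))$ basis-independent and hence common to both descriptions.
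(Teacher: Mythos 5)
Your proof is correct and follows the same approach as the paper's: both quantities are identified with $v_d(\Det(M_{\mathcal C}(\varphi)))$ via Remarks \ref{rem : matricesnewton} and \ref{rem : matricesnhodge}. The paper leaves the basis-independence of this valuation implicit in those remarks, and your explicit verification via the semilinear change-of-basis formula $M_{\mathcal C'}(\varphi)=P^{-1}M_{\mathcal C}(\varphi)\,\varphi(P)$ (using that $\varphi$ is a ring automorphism of $\Ocpflat$, hence preserves units) is a reasonable elaboration of the same argument.
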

		\proof
This follows from the fact that both are the $d$-adic valuation	of the determinant	of $M_{\mathcal B}(\varphi)$ for any basis $\mathcal B$ of $\varphi$, see Remarks \ref{rem : matricesnewton} and \ref{rem : matricesnhodge}. 
		\endproof
\subsection{Hodge polygon and Hodge numbers}\label{subsec : geometric Hodge polygon}
Let $\mathcal X$ be a smooth proper scheme over $\Ocp$ and $\widehat{\mathcal X}$ its formal $p$-adic completion. We write $Y$ for $\mathcal X_k$ and $X$ for $\mathcal X_{\mathbb C_p}$.
We let 
$$h^{a,b}:=\Dim_{\mathbb C_p}(\H^b(X,\Omega^a_{X/\mathbb C_p}))\quad \text{and} \quad h^n:=\Dim_{\mathbb C_p}(\H^n_{\dr}(X/\mathbb C_p))=\Dim_{\mathbb Q_p}(\H^n_{\et}(X,\Qp))$$
If \ref{eq : equalityhodgenumber} holds, then $\H^n(\Delta/p)$ is a finite free $\Ocpflat$-module of rank $h^n$ by Proposition \ref{prop : basicpropertiesprismatic}. Hence, $\H^n(\Delta/p)$ has a total Newton slope $\TS (\H^n(\Delta/p))$, a Hodge polygon and a Hodge height $h(\H^n(\Delta/p))$. We now explain how to compute them from the geometry of $\mathcal X$.
\begin{definition}
Let $Q_0:=(0,0)\in \mathbb R^2$ and, for $1\leq j\leq h^n$, let 
	$$Q_j:=\Biggl(\sum_{i=0}^j h^{i,n-i},\sum_{i=0}^j i \cdot h^{i,n-i}\Biggr)\in \mathbb R^2.$$
		We define the \emph{geometric Hodge polygon} of $\H^n_{\dr}(\mathcal X/\Ocp)$ to be the  union, for $0\leq j\leq r-1$, of the segments that join $Q_j$ and $Q_{j+1}$.
\end{definition}
\begin{proposition}\label{prop : hodgevsgeometrichodge}
Assume that \ref{eq : equalityhodgenumber} holds. 
Then the Hodge polygon of $\H^n(\Delta/p)$ coincides with the geometric Hodge polygon of $\H^n_{\dR}(\mathcal X/\Ocp)$. 
\end{proposition}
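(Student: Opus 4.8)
The plan is to pin down the multiset of Hodge slopes $\beta_1\le\dots\le\beta_{h^n}$ of $M:=\H^n(\Delta/p)$ completely: I will show that each integer $i\in\{0,\dots,n\}$ occurs among the $\beta_j$ with multiplicity exactly $h^{i,n-i}$. Reading the slopes in increasing order, the Hodge polygon of $M$ then has its breakpoints at the points $Q_j$, hence coincides with the geometric Hodge polygon of $\H^n_{\dR}(\mathcal X/\Ocp)$. Throughout I use that $M$ is finite free over $\Ocpflat$ of rank $h^n$ (Proposition~\ref{prop : basicpropertiesprismatic}(1)), that $\varphi=\phi\circ s$ with $s\colon M\xrightarrow{\ \sim\ }\H^n(\Delta^{(1)}/p)$ the Frobenius-semilinear isomorphism and $\phi$ the $\Ocpflat$-linear map, and that $\varphi$ is injective (it becomes an isomorphism after inverting $d$, and $M$ is $d$-torsion-free). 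By Remark~\ref{rem : matricesnhodge} I may fix bases $(e_1,\dots,e_{h^n})$ and $(f_1,\dots,f_{h^n})$ of $M$ with $\varphi(e_j)=d^{\beta_j}f_j$.

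First I would introduce the Nygaard filtration on $M$, namely $\mathrm{Fil}^i:=\varphi^{-1}(d^iM)$. Transporting Proposition~\ref{prop : nygaard} through $s$ (using $\varphi=\phi\circ s$) identifies $\mathrm{Fil}^i$ with $\H^n(N^{\ge i}/p)$; in particular each $\mathrm{Fil}^i$ is finite free of rank $h^n$, $\mathrm{Fil}^0=M$, $\mathrm{Fil}^{i}\supseteq\mathrm{Fil}^{i+1}$, and $\bigcap_i\mathrm{Fil}^i=0$. On the other hand, using $v_d(\varphi_{\Ocpflat}(c))=p\,v_d(c)$ together with the diagonal shape of $\varphi$, a direct computation gives
$$\mathrm{Fil}^i=\bigoplus_{j}\{\,c\in\Ocpflat : v_d(c)\ge\gamma_j(i)\,\}\cdot e_j,\qquad \gamma_j(m):=\max\big(0,\tfrac{m-\beta_j}{p}\big),$$
so that $\mathrm{Fil}^i/\mathrm{Fil}^{i+1}\cong\bigoplus_j \Ocpflat/d^{\,\gamma_j(i+1)-\gamma_j(i)}$.

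The crucial step is a second, independent description of these graded pieces via the divided Frobenius. The assignment $x\mapsto[\varphi(x)/d^i]$ is a $\varphi_{\Ocpflat}$-semilinear map $\mathrm{Fil}^i\to M/dM\cong\H^n(\overline\Delta/p)$ (Proposition~\ref{prop : basicpropertiesprismatic}(3)) with kernel exactly $\mathrm{Fil}^{i+1}$, inducing a semilinear injection $\mathrm{Fil}^i/\mathrm{Fil}^{i+1}\hookrightarrow\H^n(\overline\Delta/p)$. Unwinding the relation $\phi\circ\iota=\xi^i\cdot\phi_i$ and the exact triangle~(\ref{eq : grnygard}) of Theorem~\ref{thm : BS}, the image of this injection is the image of $\H^n(N^{\ge i}/p)\xrightarrow{\phi_i}\H^n(\tau_{\le i}\overline\Delta/p)\to\H^n(\overline\Delta/p)$. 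The first arrow is surjective, since in the long exact sequence of~(\ref{eq : grnygard}) the map $\H^n(\tau_{\le i}\overline\Delta/p)\to\H^{n+1}(N^{\ge i+1}/p)$ vanishes because $\H^{n+1}(N^{\ge i+1}/p)\hookrightarrow\H^{n+1}(N^{\ge i}/p)$ by Proposition~\ref{prop : nygaard}. The second arrow is injective with image the $i$-th conjugate filtration step $C_i\subseteq\H^n(\overline\Delta/p)$, by the degeneration in Proposition~\ref{prop : basicpropertiesprismatic}(4); and by the Hodge--Tate comparison~(\ref{eq : hodgeTate}) and Lemma~\ref{lem : Hodgegroups}, $C_i$ is free over $\Ocpflat/d$ of rank $s_i:=\sum_{b=0}^i h^{b,n-b}$. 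Thus $\varphi_{\Ocpflat}^*(\mathrm{Fil}^i/\mathrm{Fil}^{i+1})\cong C_i\cong(\Ocpflat/d)^{s_i}$, and since $\varphi_{\Ocpflat}$ is an automorphism of $\Ocpflat$, applying the inverse of $\varphi_{\Ocpflat}^*$ yields $\mathrm{Fil}^i/\mathrm{Fil}^{i+1}\cong(\Ocpflat/d^{1/p})^{s_i}$ as $\Ocpflat$-modules.

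Finally I would compare the two descriptions of $\mathrm{Fil}^i/\mathrm{Fil}^{i+1}$ using the uniqueness of the invariant factors of a finitely presented module over the valuation ring $\Ocpflat$ (\cite[Tag~0ASP]{stacks-project}): for every $i\ge 0$ the multiset $\{\gamma_j(i+1)-\gamma_j(i)\}_j$ must equal $\{(1/p)^{(s_i)},\,0^{(h^n-s_i)}\}$. Since $\gamma_j(i+1)-\gamma_j(i)\in\{0,1/p\}$ for all $i$ forces each $\beta_j$ to be a non-negative integer (a value $i<\beta_j<i+1$ would produce the difference $(i+1-\beta_j)/p\in(0,1/p)$), and then $\gamma_j(i+1)-\gamma_j(i)=1/p$ precisely when $i\ge\beta_j$, I obtain $\#\{j:\beta_j\le i\}=s_i=\sum_{b=0}^i h^{b,n-b}$ for all $i$, hence $\#\{j:\beta_j=i\}=h^{i,n-i}$. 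This is exactly the desired description of the Hodge slopes. I expect the third paragraph to be the main obstacle: identifying the algebraic ``divided Frobenius on the Nygaard graded pieces'' with the conjugate, i.e.\ Hodge--Tate, filtration on $\H^n(\overline\Delta/p)$ through the maps $s,\phi,\phi_i,\iota$ of Theorem~\ref{thm : BS}, while tracking the Frobenius-semilinearity with care — it is precisely this semilinearity that turns $(\Ocpflat/d)^{s_i}$ into $(\Ocpflat/d^{1/p})^{s_i}$. Everything else is routine (semi)linear algebra over the valuation ring $\Ocpflat$.
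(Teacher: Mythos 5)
Your proof is correct, and it reaches the same conclusion (the Hodge slopes of $\H^n(\Delta/p)$ are the integers $i$ with multiplicity $h^{i,n-i}$) using the same key ingredients as the paper — Proposition~\ref{prop : nygaard}, the exact triangle~(\ref{eq : grnygard}) from Theorem~\ref{thm : BS}, the conjugate degeneration in Proposition~\ref{prop : basicpropertiesprismatic}(4) and the freeness results of Lemma~\ref{lem : Hodgegroups} — but the organization is genuinely different. The paper fixes bases diagonalizing the linearized Frobenius $\phi$ and then computes the quotients $\H^n(\Delta^{(1)}/p)/\phi^{-1}(d^i\H^n(\Delta/p))$ by induction on $i$, via a ladder of commutative diagrams whose input at step $i$ is the identification of $\H^n\bigl((N^{\geq i-1}/p)/(N^{\geq i}/p)\bigr)$ with $\H^n(\tau_{\leq i-1}\overline\Delta/p)$, a free $\Ocpflat/d$-module of rank $\sum_{j\leq i-1}h^{j,n-j}$. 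You instead compute, with no induction, the associated graded pieces $\mathrm{Fil}^i/\mathrm{Fil}^{i+1}$ of the Nygaard filtration in two independent ways — once explicitly via the $\gamma_j$-coordinates coming from the diagonalized $\varphi$, once via the divided Frobenius surjecting onto the $i$-th conjugate filtration step $C_i\subseteq\H^n(\overline\Delta/p)$ — and then compare the two answers using uniqueness of invariant factors over the valuation ring $\Ocpflat$. A genuine advantage of your route is that integrality of the Hodge slopes $\beta_j$ falls out immediately (any $\beta_j\in(i,i+1)$ would create an invariant factor strictly between $0$ and $1/p$, contradicting the second description), whereas in the paper's proof it is carried implicitly inside the inductive hypothesis. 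The cost is the Frobenius-semilinearity bookkeeping, which you handle correctly: the twist by $\varphi_{\Ocpflat}$ is exactly what converts the free $\Ocpflat/d$-module $C_i$ into a free $\Ocpflat/d^{1/p}$-module, matching the jump $\gamma_j(i+1)-\gamma_j(i)=1/p$.
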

Combining Proposition \ref{prop : hodgevsgeometrichodge} and Lemma \ref{lem : hodgevsnewton}, we get Theorem \ref{thm : HodgevsNewton}. 
\proof[Proof of Proposition \ref{prop : hodgevsgeometrichodge}]
By definition, we need to show that that there exists an isomorphism
$$\H^n(\Delta/p)/\varphi(\H^n(\Delta/p))\simeq \bigoplus_{j=0}^{n}(\Ocpflat/d^{j})^{h^{j,n-j}},$$
which in turn is equivalent to showing that there exists an isomorphism
$$\H^n(\Delta/p)/\phi(\H^n(\Delta^{(1)}/p))\simeq \bigoplus_{j=0}^{n}(\Ocpflat/d^{j})^{h^{j,n-j}}.$$

This is equivalent to showing that, for every $i\in \Z_{\geq 0}$, there exists an isomorphism 
$$\H^n(\Delta/p)/(\phi(\H^n(\Delta^{(1)}/p)),d^i)\simeq \bigoplus^{i-1}_{j=0}(\Ocpflat/d^{j})^{h^{j,n-j}}\oplus (\Ocpflat/d^{i})^{\sum_{j\geq i} h^{j,n-j}}.$$

Choose bases of $\H^n(\Delta^{(1)}/p))$ and $\H^n(\Delta/p)$ so that the matrix of $\phi$ with respect to these bases is diagonal (see Remark \ref{rem : matricesnhodge}) with entries $\{d^{\beta_1}, \dots, d^{\beta_r}\}$, where $\{\beta_1,\dots, \beta_r\}$ are the Hodge slopes of $\H^n(\Delta/p)$. 
Then, the commutative diagram with exact rows and surjective vertical arrows
	\begin{center}
	\begin{tikzcd}
0\arrow{r}&	\H^n(\Delta^{(1)}/p))\arrow{r}{\phi}\arrow{d}&\H^n(\Delta/p)\arrow{r}\arrow{d}& \frac{\H^n(\Delta/p)}{\phi(\H^n(\Delta^{(1)}/p))}\arrow{r}\arrow{d} & 0 \\
0\arrow{r}& \frac{\H^n(\Delta^{(1)}/p))}{\phi^{-1}(d^i\H^n(\Delta/p))}\arrow{r}{\phi}&\frac{\H^n(\Delta/p)}{d^i\arrow{r}}& \frac{\H^n(\Delta/p)}{(\phi(\H^n(\Delta^{(1)}/p)), d^i)}\arrow{r}&0
	\end{tikzcd}
\end{center}
induces a commutative diagram with exact rows
	\begin{center}
	\begin{tikzcd}
0\arrow{r}& \frac{\H^n(\Delta^{(1)}/p)}{\phi^{-1}(d^i\H^n(\Delta/p))}\arrow["\phi"']{d}{\simeq}\arrow{r}{\phi}&\frac{\H^n(\Delta/p)}{d^i}\arrow{r}\arrow{d}{\simeq}& \frac{\H^n(\Delta/p)}{(\phi(\H^n(\Delta^{(1)}/p)),d^i)}\arrow{r}\arrow{d}{\simeq}&0\\
0\arrow{r}& \bigoplus_{\beta_j<i}\frac{d^{\beta_j}\Ocpflat}{d^{i}}\arrow{r}&\frac{\Ocpflat^{r}}{d^i} \arrow{r}& \bigoplus_{\beta_j<i}\frac{\Ocpflat}{d^{\beta_j}}\oplus \bigoplus_{\beta_j\geq i}\frac{\Ocpflat}{d^{i}}\arrow{r}&0, 
	\end{tikzcd}
	\end{center}
	in which the vertical arrows are isomorphisms, the bottom left arrow is the natural inclusion and $r:=h^n$.
	Hence, it is enough to show that there exists an isomorphism $$\H^n(\Delta^{(1)}/p)/\phi^{-1}(d^i\H^n(\Delta/p))\simeq \bigoplus^{i-1}_{j=0}(d^{j}\Ocpflat/d^{i})^{h^{j,n-j}}.$$
		We prove this by induction on $i$. The case $i=0$ is trivial, since all the groups involved are zero, so assume $i\geq 1$.
	By Proposition \ref{prop : nygaard}, one has 
	$$\phi^{-1}(d^i\H^n(\Delta/p))=\H^n(N^{\geq i}/p)\quad \text{and}$$
	$$\quad \H^n(\Delta^{(1)}/p)/\H^n(N^{\geq i}/p)\simeq  \H^n((\Delta^{(1)}/p)/(N^{\geq i}/p))\simeq \H^n(\Delta^{(1)}/p)/\phi^{-1}(d^i\H^n(\Delta/p)).$$
Thanks to Proposition~\ref{prop : nygaard}, we have a commutative diagram with exact rows and injective vertical maps (cf. \eqref{d})
\begin{center}
	\begin{tikzcd}[column sep=tiny]	
0\arrow{r}& \H^n((N^{\geq i-1}/p)/(N^{\geq i}/p)) \arrow{d}{\phi_{i-1}}\arrow{r}{}&\H^n(\Delta^{(1)}/p)/\H^n(N^{\geq i}/p)\arrow{r}\arrow{d}{\phi}& \H^n(\Delta^{(1)}/p)/\H^n(N^{\geq i-1}/p)\arrow{r}\arrow{d}{\phi}&0\\
0\arrow{r}& \H^n(\Delta/p)/d\arrow{r}{d^{i-1}}& \H^n(\Delta/p)/d^i\arrow{r} & \H^n(\Delta)/d^{i-1}\arrow{r} & 0
	\end{tikzcd}
	\end{center}
which can be identified, via the choice of the basis and the induction hypothesis, with
	\begin{center}
	\begin{tikzcd}[column sep=tiny]	
	& 0\arrow{d} & 0\arrow{d} & 0\arrow{d}\\
0\arrow{r}& \H^n\left(\frac{N^{\geq i-1}/p}{N^{\geq i}/p}\right) \arrow{d}\arrow{r}{}&\bigoplus_{\beta_j\geq i-1}\frac{d^{\beta_j}\Ocpflat}{d^{i}} \oplus  \bigoplus^{i-2}_{j=0}\left(\frac{d^{j}\Ocpflat}{d^{i}}\right)^{h^{j,n-j}}\arrow{r}\arrow{d}{}& \bigoplus^{i-2}_{j=0}\left(\frac{d^{j}\Ocpflat}{d^{i-1}}\right)^{h^{j,n-j}}\arrow{r}\arrow{d}{}&0\\
0\arrow{r}& (\Ocpflat/d)^r\arrow{r}{d^{i-1}}& (\Ocpflat/d^{i})^r\arrow{r} & (\Ocpflat/d^{i-1})^r\arrow{r} & 0,
	\end{tikzcd}
	\end{center}
	in which the upper right horizontal map is induced by the canonical projection
	$d^{j}\Ocpflat/d^{i}\rightarrow d^{j}\Ocpflat/d^{i-1}$. It remains to show that $\H^n((N^{\geq i-1}/p)/(N^{\geq i}/p))$ is a finite free $\Ocpflat/d$-module of rank $\sum^{i-1}_{j=0} h^{j,n-j}$. 
    To prove this, observe that, by Theorem \ref{thm : BS}, one has an isomorphism
	$$(N^{\geq i-1}/p)/(N^{\geq i}/p)\simeq \tau_{\leq i-1}(\overline \Delta/p).$$
	On the other hand, recall from Theorem \ref{thm : BS} that $\mathcal H^b(\overline \Delta/p)\simeq \Omega^b_{\mathcal X/p/(\Ocpflat/d)}$, so that $\H^a(\mathcal H^b(\overline \Delta/p))$ is a free $\Ocpflat/d$-module of rank $h^{b,a}$, by Lemma \ref{lem : Hodgegroups}. The conclusion now follows from the degeneration of the conjugate spectral sequence for $\overline \Delta/p$, 
see Proposition \ref{prop : basicpropertiesprismatic}. 
\endproof

\begin{remark}
    It follows from Proposition~\ref{prop : hodgevsgeometrichodge} that, in this setting, the Hodge slopes $\beta_i$ lie in $\{0,\dots,n\}$. 
\end{remark}

\section{Proof of the main result}\label{sec : prooffinal}
In this section we prove Theorems \ref{thm : jumpofdimension} and \ref{thm : main}, collecting the fruits of the work done in the previous sections.  We start in Section \ref{subsec : semilinear} by proving a useful semi-linear algebra lemma. The proofs of Theorems \ref{thm : jumpofdimension} and \ref{thm : main} are then contained in Sections \ref{subsec : proofofjump} and \ref{subsec : proofofmain}, respectively. 
\subsection{A semi-linear algebra lemma}\label{subsec : semilinear}
\begin{lemma}\label{lem : d-fixedpoints}
Let $M$ be a finite free $\mathcal O_{{\mathbb{C}_p^{\flat}}}$-module of rank $r\in \Z_{\geq 1}$ equipped with a Frobenius semi-linear map $\varphi:M\rightarrow M$ that becomes an isomorphism after inverting $d$.
	Suppose that $\TS (M)=ir$ for some $i\in\mathbb{Z}_{\geq 0}$. Then 
	$$
		\Dim_{\mathbb F_p} (M^{\varphi=d^i})=r
	 \quad \text{if and only if}\quad 
		 		M\simeq \bigoplus_{i=1}^r \Os(-i). $$
		\end{lemma}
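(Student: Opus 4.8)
The plan is to prove both implications directly. The implication $M\simeq\bigoplus_{j=1}^r\Os(-i)\Rightarrow\Dim_{\mathbb F_p}(M^{\varphi=d^i})=r$ is immediate: the $r$ standard generators $e_1,\dots,e_r$ of $\bigoplus_{j=1}^r\Os(-i)$ satisfy $\varphi(e_j)=d^ie_j$ and are $\Ocpflat$-linearly independent, so $\Dim_{\mathbb F_p}(M^{\varphi=d^i})\ge r$; on the other hand $M^{\varphi=d^i}$ sits inside the fixed space of the Frobenius-semilinear automorphism $d^{-i}\varphi$ of the $r$-dimensional $\mathbb C_p^\flat$-vector space $M[1/d]$, which has $\mathbb F_p$-dimension $\le r$ by \cite[Lemma 4.13, p.~128]{Milne}, so equality holds (one also checks $\TS(\bigoplus_{j=1}^r\Os(-i))=ir$, consistent with the standing hypothesis). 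For the converse I assume $\TS(M)=ir$ and $\Dim_{\mathbb F_p}(M^{\varphi=d^i})=r$, fix an $\mathbb F_p$-basis $v_1,\dots,v_r$ of $V:=M^{\varphi=d^i}$, and aim to show that the $\Ocpflat$-linear, Frobenius-equivariant map $\psi\colon\bigoplus_{j=1}^r\Os(-i)\to M$ sending the $j$-th generator to $v_j$ is an isomorphism.

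The first key step is to show that $v_1,\dots,v_r$ are $\Ocpflat$-linearly independent (equivalently, that $\psi$ is injective). For this I would let $W\subseteq M[1/d]$ be the $\mathbb C_p^\flat$-span of $V$; since $\varphi\bigl(\sum_jc_jv_j\bigr)=d^i\sum_jc_j^pv_j$ for $c_j\in\mathbb C_p^\flat$, the subspace $W$ is $\varphi$-stable, and $d^{-i}\varphi$ restricts to a Frobenius-semilinear automorphism of $W$ whose fixed space contains $V$. By \cite[Lemma 4.13, p.~128]{Milne} that fixed space has $\mathbb F_p$-dimension at most $\Dim_{\mathbb C_p^\flat}W$, so from $\Dim_{\mathbb F_p}V=r$ I obtain $\Dim_{\mathbb C_p^\flat}W\ge r=\Dim_{\mathbb C_p^\flat}M[1/d]$, i.e.\ $W=M[1/d]$; hence $v_1,\dots,v_r$ is a $\mathbb C_p^\flat$-basis of $M[1/d]$ and the vectors are $\Ocpflat$-linearly independent.

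The second step is the determinant computation. Fixing a basis $\mathcal C$ of $M$, let $A\in M_r(\Ocpflat)$ be the matrix of $\psi$ and $\Phi:=M_{\mathcal C}(\varphi)$; since $\psi$ is injective, $\Det A\ne 0$, and it suffices to prove $\Det A\in\Ocpflat^\times$. Frobenius-equivariance of $\psi$ reads $\Phi\,A^{(p)}=d^iA$, where $A^{(p)}$ is the entrywise $p$-th power of $A$, and taking determinants, using $\Det(A^{(p)})=(\Det A)^p$ because $\Ocpflat$ has characteristic $p$, gives $\Det(\Phi)\,(\Det A)^{p-1}=d^{ir}$ after cancelling $\Det A$. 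Since $v_d(\Det\Phi)=\TS(M)=ir$ by Remark \ref{rem : matricesnewton}, this forces $(p-1)\,v_d(\Det A)=0$, hence $v_d(\Det A)=0$; so $\Det A$ is a unit, $\psi$ is an isomorphism, and $M\simeq\bigoplus_{j=1}^r\Os(-i)$.

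The main obstacle is the linear-independence step: the hypotheses only provide an equality of $\mathbb F_p$-dimensions, and promoting this to $\Ocpflat$-linear independence genuinely uses that $\Ocpflat$ is a valuation ring with algebraically closed fraction field (through \cite[Lemma 4.13]{Milne}); once that is secured, the determinant bookkeeping with Remark \ref{rem : matricesnewton} is routine. An alternative that sidesteps this step runs through a Frobenius-stable filtration $F^\bullet$ as in Proposition \ref{prop : existence filtration}: the assumption $\Dim_{\mathbb F_p}(M^{\varphi=d^i})=r$ forces every graded slope to satisfy $\alpha_j\le i$ (since $\Os(-\alpha)^{\varphi=d^i}$ has dimension $1$ for $\alpha\le i$ and $0$ otherwise, and dimensions are subadditive along the filtration), whence $\sum_j\alpha_j=\TS(M)=ir$ forces $\alpha_j=i$ for all $j$, and a short induction on $r$ lifting an $\mathbb F_p$-basis of $(M/F^r)^{\varphi=d^i}$ to $M^{\varphi=d^i}$ then produces the desired splitting.
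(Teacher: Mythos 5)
Your proof is correct, and it takes a genuinely different route from the one in the paper. The paper argues by induction on $r$: it invokes Proposition~\ref{prop : existence filtration} to produce a Frobenius-stable filtration with graded pieces $\mathcal O(-a_j)$, uses the dimension hypothesis and subadditivity of $\dim_{\mathbb F_p}(-)^{\varphi=d^i}$ along the filtration to force every $a_j\le i$, combines this with $\sum a_j=ir$ to get $a_j=i$ for all $j$, applies the inductive hypothesis to $F^1$, and finally shows that the surjection $M\to\mathcal O(-i)$ admits a $\varphi$-equivariant splitting because $M^{\varphi=d^i}\to\mathcal O(-i)^{\varphi=d^i}$ is surjective. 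You instead build the candidate isomorphism $\psi\colon\bigoplus_j\Os(-i)\to M$ directly from an $\mathbb F_p$-basis of $M^{\varphi=d^i}$, prove it is injective via a span argument in $M[1/d]$ (the $\mathbb C_p^\flat$-span of the fixed points is $\varphi$-stable and, by the cited Milne lemma, must have full dimension), and then verify that the determinant of $\psi$ is a unit by taking determinants in the Frobenius-equivariance relation $\Phi A^{(p)}=d^iA$ and using $v_d(\det\Phi)=\TS(M)=ir$ from Remark~\ref{rem : matricesnewton}. Both arguments are correct and rely on the same background (Milne's lemma and the determinant characterisation of the total slope); yours avoids induction and the splitting lemma, and makes the role of the hypothesis $\TS(M)=ir$ more visible in the final cancellation, while the paper's filtration route fits more naturally into the surrounding framework where these filtrations are studied anyway. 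The alternative you sketch at the end is essentially the paper's own proof.
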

\proof
The reverse implication being clear, we prove the forward implication.  To do this, we argue by induction on $r$, the case $r=1$ being immediate from the definition of total slope.  
So assume that $r>1$ and $\Dim_{\mathbb F_p} (M^{\varphi=d^i})=r$. By Proposition \ref{prop : existence filtration}, there exists a Frobenius-stable decreasing filtration $F^{\bullet}$ of $M$ and $a_1,\dots, a_r\in \mathbb Q_{\geq 0}$ such that
$$\Gr_j:=F^j/F^{j+1}\simeq \mathcal O(-a_j)\quad \text{and}\quad \sum_{j=1}^r a_j=ir.$$
Since 
$$r=\Dim_{\mathbb F_p} (M^{\varphi=d^i})\leq \sum_{j=1}^r\Dim_{\mathbb F_p} (\Gr_j^{\varphi=d^i})\quad \text{and}\quad \mathcal O(-a_j)^{\varphi=d^i}\neq 0 \Leftrightarrow a_j\leq i,$$
the assumption $\Dim_{\mathbb F_p} (M^{\varphi=d^i})=r$ implies that $a_j\leq i$ for every $j$. Since $\sum_{j=1}^r a_j=ir$, this in turn implies that $a_j=i$ for every $j$. 
Consider the commutative diagram with exact rows and columns
\begin{center}
	\begin{tikzcd}
0\arrow{r}&(F^1)^{\varphi=d^i}\arrow{r}\arrow[hook]{d}&M^{\varphi=d^i}\arrow{r}\arrow[hook]{d}& \mathcal O(-i)^{\varphi=d^i}=\mathbb F_p\arrow[hook]{d}\\
0\arrow{r}&F^1\arrow{r}\arrow{d}{d^i-\varphi}&M\arrow{r}\arrow{d}{d^i-\varphi}& \mathcal O(-i)\arrow{r}\arrow{d}{d^i-\varphi}& 0.\\
0\arrow{r}&F^1\arrow{r}&M\arrow{r}& \mathcal O(-i)\arrow{r}& 0.
	\end{tikzcd}
\end{center}
Since $\Dim_{\mathbb F_p}((F^1)^{\varphi=d^i})\leq \rank_{\Ocpflat}(F^1)=r-1$ and $\Dim_{\mathbb F_p} (M^{\varphi=d^i})=r$, the upper row shows that $\Dim_{\mathbb F_p}((F^1)^{\varphi=d^i})=r-1$. Since $\TS (F^1)=i(r-1)$, by induction $F^1\simeq \bigoplus_{i=1}^{r-1} \Os(-i)$, hence it is enough to show that the surjective map $M\rightarrow \mathcal O(-i)$ admits a $\varphi$-equivariant section. Since  $\Dim_{\mathbb F_p}((F^1)^{\varphi=d^i})=r-1$ and $\Dim_{\mathbb F_p}(M^{\varphi=d^i})=r$, the map $M^{\varphi=d^i}\rightarrow \mathcal O(-i)^{\varphi=d^i}$ is surjective. The $\Ocpflat$-module $\mathcal O(-i)$ is generated by an element $e$ such that $\varphi(e)=d^ie$, thus there exists an element $x\in M^{\varphi=d^i}\subseteq M$ mapping to $e$ and the map $\mathcal O(-i)\rightarrow M$ sending $e$ to $x$ gives a $\varphi$-equivariant splitting of the surjection $M\rightarrow \mathcal O(-i)$. 
\endproof
\begin{example}\label{ex : slopedfixedpoint}
		In Lemma \ref{lem : d-fixedpoints}, to conclude that $M\simeq \bigoplus_{i=1}^r \Os(-i)$ it is not enough to assume that $\Dim_{\mathbb F_p} (M^{\varphi=d^i})=r$. Some assumptions on $\TS (M)$ are necessary, as the following example shows. Let $M=\Ocpflat\oplus \Ocpflat$ be endowed with a Frobenius $\varphi$ whose matrix with respect to the standard basis $\mathcal B$ of $M$ is given by 
		$$M_{\mathcal B}(\varphi):=\begin{bmatrix}
    0    & d\\
    1       & 0 \\
\end{bmatrix}$$
Then $\TS (M)=1$, so that $M\not\simeq \Os(-1)\oplus \Os(-1)$, but 
$$M^{\varphi=d}=\{(a^pd^{\frac{p}{p^2-1}},ad^{\frac{1}{p^2-1}})\in \Ocpflat\oplus \Ocpflat \text{ for } a\in \mathbb F_{p^2}\}$$
is a 2-dimensional $\mathbb F_p$-vector space. 
		\end{example}

\subsection{Proof of Theorem \ref{thm : jumpofdimension}}\label{subsec : proofofjump}
Arguing by contradiction, we assume that 
$$\Dim_{\mathbb F_p}(\H^{2n}(\Delta/p)^{\varphi=d^n})=\rank_{\mathcal O_{\mathbb C^{\flat}_p}}(\H^{2n}(\Delta/p)).$$
By Theorem \ref{thm : HodgevsNewton}, one has  
$$\TS (\H^{2n}(\Delta/p))=\sum^{2n}_{j=0} j\cdot h^{j,2n-j}=n\cdot h^{2n}=n\cdot \rank_{\Ocpflat}(\H^{2n}(\Delta/p)),$$
so we can apply Lemma \ref{lem : d-fixedpoints} to deduce that 
$$\H^{2n}(\Delta/p)\simeq \mathcal O(-n)^{\rank_{\Ocpflat}(\H^{2n}(\Delta/p))}.$$
Hence, the Hodge polygon of $\H^{2n}(\Delta/p)$ is a straight line with slope $n$. By Proposition \ref{prop : hodgevsgeometrichodge}, this implies the same for the geometric Hodge polygon of $\H_{\dr}^{2n}(X/\mathbb C_p)$. By definition, this means that $\H^{2n}(X/\mathbb C_p)=\H^n(X,\Omega^n_{X/\mathbb C_p})$, that is, $\H^i(X,\Omega^{2n-i}_{X/\mathbb C_p})=0$ for $i\neq n$, which concludes the proof. 
\subsection{Proof of Theorem \ref{thm : main}}\label{subsec : proofofmain}
This follows from Theorems~\ref{thm : Brauerandhenselisation}, \ref{thm : prismaticinterpretationkernel} and \ref{thm : jumpofdimension}, and Proposition~\ref{prop : gb alt}.
\section{Abelian varieties and Kummer varieties}\label{sec : abelian varieties}
In this section we specialise to the case in which $X$ is an abelian variety. 
We start in Section \ref{subsec : prismaticdieu} by recalling the relationship between prismatic cohomology of abelian varieties, their p-torsion subgroups and Dieudonn\'e theory. We then apply this to products of elliptic curves (Section \ref{subsec: product}), to abelian varieties of positive $p$-rank and to their associated Kummer varieties (Section \ref{subsec : abelian varieties}). 

If $G$ is a finite locally free group scheme over $\mathcal O_{\mathbb C_p}$ we write 
$$0\rightarrow G^0\rightarrow G\rightarrow G^{\et}\rightarrow 0$$
for the connected-\'etale sequence of $G$, and we denote by $G^{\vee}$ the Cartier dual of $G$. 
\subsection{Prismatic Dieudonn\'e theory}\label{subsec : prismaticdieu}
Let $\mathcal C$ be the category of free $\Ocpflat$-modules $M$ endowed with a $\varphi$-linear Frobenius $\varphi_M:M\rightarrow M$ which becomes an isomorphism after inverting $d$. 
The following theorem summarises the main results we need from \cite{Dieudonneprismatic}.
\begin{theorem}\label{thm : prismaticDieudonne}
	There exists an exact contravariant fully faithful functor
	$$\mathbb D:\{\text{p-torsion finite locally free $\Ocp$- group schemes}\}\rightarrow \mathcal C$$
	 such that
	\begin{enumerate}
	\item $\mathbb D(\mu_p)=\mathcal O(-1)$;
	\item $\mathbb D(\mathcal G^{\vee})\simeq \mathbb D(\mathcal G)^{\vee}(-1)$, where $\mathbb D(\mathcal G)^{\vee}$ is the dual of $\mathbb D(\mathcal G)$;
		\item $\mathbb D(\Z/p)=\mathcal O(0)$;
\item if $\mathcal A/\Ocp$ is an abelian scheme of dimension $g$, then $\H^n(\mathcal A,\Delta/p)\simeq \wedge^n\mathbb D(\mathcal A[p])$ and it has rank $\binom{2g}{n}$;
\item $\mathbb D(G)^{\varphi=1}\otimes \Ocpflat=\mathbb D(G^{\et})$.
\end{enumerate}
\end{theorem}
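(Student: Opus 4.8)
The plan is to derive all five assertions from the prismatic Dieudonn\'e theory of \cite{Dieudonneprismatic}, applied to the perfect prism $(\Ainf,(\xi))$ and read modulo $p$. Recall that loc.\ cit.\ constructs an exact, contravariant, fully faithful functor $\mathbf M_\Delta$ sending $p$-divisible groups over $\Ocp$ to finite locally free $\Ainf$-modules with a $\varphi_{\Ainf}$-semilinear Frobenius becoming an isomorphism after inverting $\xi$, and, by restriction to $p$-torsion, sending finite locally free group schemes killed by $p$ to $\Ainf/(p)=\Ocpflat$-modules with Frobenius becoming an isomorphism after inverting $d$. Since $\Ocpflat$ is a valuation ring and $\xi$ reduces to $d$ modulo $p$ (see \eqref{eq : defofd}), one verifies, as in \cite{Dieudonneprismatic}, that $\mathbb D(G):=\mathbf M_\Delta(G)$ lands in $\mathcal C$; exactness, contravariance and full faithfulness of $\mathbb D$ are then inherited from $\mathbf M_\Delta$, all the relevant objects being killed by $p$.

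For (1), I would invoke the explicit description of $\mathbf M_\Delta(\mu_{p^\infty})$ from \cite{Dieudonneprismatic}: it is the Breuil--Kisin twist $\Ainf\{1\}$, i.e.\ $\Ainf$ with Frobenius equal to $\xi$ up to a unit. Restricting to $\mu_p=\mu_{p^\infty}[p]$ and reducing modulo $p$ turns the Frobenius into a unit times $d$, which after the change of generator in Lemma~\ref{lem : rank1} is exactly $\mathcal O(-1)$. Statement (2) is the compatibility of $\mathbf M_\Delta$ with Cartier duality from \cite{Dieudonneprismatic}, namely $\mathbf M_\Delta(\mathcal G^\vee)\simeq \mathbf M_\Delta(\mathcal G)^\vee\{1\}$; reducing modulo $p$ and identifying the mod-$p$ Breuil--Kisin twist $\{1\}$ with $\mathcal O(-1)$ (consistently with (1)) gives $\mathbb D(\mathcal G^\vee)\simeq \mathbb D(\mathcal G)^\vee(-1)$ — the Tate twist $(-1)$ being precisely what brings the naive dual $\mathbb D(\mathcal G)^\vee$, whose Frobenius a priori has $d$ in the denominator, back into $\mathcal C$. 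Statement (3) is then formal: $\Z/p\simeq \mu_p^\vee$, so by (1) and (2), $\mathbb D(\Z/p)\simeq \mathbb D(\mu_p)^\vee(-1)\simeq \mathcal O(-1)^\vee(-1)\simeq \mathcal O$.

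For (4), I would combine two inputs. First, the group structure on an abelian scheme $\mathcal A/\Ocp$ makes $R\Gamma(\mathcal A,\Delta)$ a graded-commutative (derived) Hopf algebra, so a standard argument gives $R\Gamma(\mathcal A,\Delta)\simeq \wedge^\bullet_{\Ainf}\H^1(\mathcal A,\Delta)$ compatibly with $\varphi$; in particular $\H^n(\mathcal A,\Delta)$ is finite free over $\Ainf$, hence $p$-torsion free, of rank $\binom{2g}{n}$, with $\H^1(\mathcal A,\Delta)$ of rank $2g$. Second, \cite{Dieudonneprismatic} identifies $\H^1(\mathcal A,\Delta)\simeq \mathbf M_\Delta(\mathcal A[p^\infty])$ as $\varphi$-modules. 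Applying the contravariant exact functor $\mathbf M_\Delta$ to $0\to \mathcal A[p]\to \mathcal A[p^\infty]\xrightarrow{p}\mathcal A[p^\infty]\to 0$ yields $\mathbf M_\Delta(\mathcal A[p^\infty])/p\simeq \mathbb D(\mathcal A[p])$; reducing the Hopf-algebra identification modulo $p$ — legitimate precisely because of the torsion-freeness just noted, so that $(-)/p$ commutes with $\H^1(-,\Delta)$ and with $\wedge^n$ — then gives $\H^n(\mathcal A,\Delta/p)\simeq \wedge^n\mathbb D(\mathcal A[p])$, of rank $\binom{2g}{n}$ since $\#\mathcal A[p]=p^{2g}$.

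For (5), apply the contravariant exact functor $\mathbb D$ to the \'etale connected sequence $0\to G^0\to G\to G^{\et}\to 0$ to obtain $0\to \mathbb D(G^{\et})\to \mathbb D(G)\to \mathbb D(G^0)\to 0$ in $\mathcal C$. On $\mathbb D(G^{\et})$ the Frobenius is already an isomorphism over $\Ocpflat$ (it is ``unit-root''), whereas on $\mathbb D(G^0)$ it is topologically nilpotent modulo $\mathfrak m^\flat$; iterating $\varphi$ gives $\mathbb D(G^0)^{\varphi=1}=0$, so every $\varphi$-fixed vector of $\mathbb D(G)$ already lies in $\mathbb D(G^{\et})$ and $\mathbb D(G)^{\varphi=1}=\mathbb D(G^{\et})^{\varphi=1}$. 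It remains to check that, for a unit-root module $N$ over the $d$-adically complete valuation ring $\Ocpflat$ with algebraically closed residue field $k$, the natural map $N^{\varphi=1}\otimes_{\mathbb F_p}\Ocpflat\to N$ is an isomorphism: over $k$ this is Lang's theorem/Artin--Schreier descent, and one lifts a basis so obtained by Hensel's lemma, exactly as in the rank-one computation behind Lemma~\ref{lem : notemptiness}. Hence $\mathbb D(G)^{\varphi=1}\otimes\Ocpflat=\mathbb D(G^{\et})$. I expect the main obstacle to be step (4) — matching the $\varphi$-module structures on prismatic $\H^1$, on $\mathbf M_\Delta(\mathcal A[p^\infty])$ and on the exterior powers, and controlling their behaviour under reduction modulo $p$ — together with pinning down the Breuil--Kisin twist normalisations so that (1)--(3) are mutually consistent; the remaining steps are bookkeeping and a short unit-root descent.
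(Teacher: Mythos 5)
Your proposal is correct and follows essentially the same route as the paper's proof, which is a concatenation of citations to Anschütz--Le Bras: the paper derives existence from their Theorem~5.1.4 plus the observation that a $p$-torsion $A_{\mathrm{inf}}$-module has projective dimension $\leq 1$ iff it is $\Ocpflat$-locally free, and then obtains (1)--(5) from their Propositions~4.7.3 and~4.6.9, Corollaries~4.5.7--4.5.8, and Remark~4.9.6 respectively. Where you differ is only in granularity -- for (4) you reconstruct the Hopf-algebra/exterior-power argument and the passage modulo $p$, and for (5) you spell out the \'etale--connected sequence, topological nilpotence on the connected part, and the Lang/Hensel unit-root descent -- all of which is precisely the content of the cited results, so the underlying mathematics coincides.
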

\proof
The existence of the functor follows from \cite[Theorem 5.4]{Dieudonneprismatic}  and the fact that a $p$-torsion $A_{\inf}$-module has projective dimension $\leq 1$ if and only if it is free as an $\Ocpflat$-module. Then $(1)$ follows from \cite[Proposition 4.77 and the preceding discussion]{Dieudonneprismatic}, since the map $f:\mathbb Z_p[[q-1]]\rightarrow A_{\inf}$ sending $q$ to $[\epsilon^{1/p}]$ satisfies $f\left(\frac{q^{p}-1}{q-1}\right)=\xi$, see also \cite[Corollary 1.3]{Mondalprismatic} and \cite[Notation 2.6.3]{BL}. Point $(2)$ is the combination of point $(1)$ and \cite[Proposition 4.73]{Dieudonneprismatic}, while $(3)$ follows from $(1)$ and $(2)$. Finally, $(4)$ is \cite[Corollaries 4.63 and 4.64]{Dieudonneprismatic} and (5) follows from \cite[Remark 4.91]{Dieudonneprismatic}. 
\endproof
\begin{example}\label{ex : 3elliptic curves}
Assume that $X$ is the triple product of an elliptic curve $E$ with N\'eron model $\mathcal E$. We claim that, in this situation, one has an equality
	$$\Dim_{\mathbb F_p}(\H^{4}(\mathcal X_{\Ocp},\Delta/p)^{\varphi=d^{3}})=\rank_{\mathcal O_{\mathbb C^{\flat}_p}}(\H^{4}(\mathcal X_{\Ocp},\Delta/p)).$$
    Indeed, by the K\"{u}nneth formula (see e.g.\ \cite[Corollary 3.31]{Dieudonneprismatic})
$$\H^{4}(\mathcal X_{\Ocp},\Delta/p)\simeq \bigoplus_{a+b+c=4}\H^a(\mathcal E,\Delta/p)\otimes \H^b(\mathcal E,\Delta/p)\otimes \H^c(\mathcal E,\Delta/p).$$
Since 
$$
\H^n(\mathcal E,\Delta/p)=\begin{cases}
\mathcal O(0),& \text{if $n=0$}\\
			\mathcal O(-1) , & \text{if $n=2$}\\
            \{0\}, & \text{if $n>2$}\\
		 \end{cases}
$$
by Theorem \ref{thm : prismaticDieudonne}, the claim follows from the fact that $\H^1(\mathcal E,\Delta/p)^{\varphi=d}=(\Z/p)^2$ by Example \ref{ex : fixed elliptic curve}. 
     \end{example}
\subsection{Products of elliptic curves}\label{subsec: product}
\begin{proof}[Proof of Proposition \ref{prop : elliptic curves}]
Recall that the K\"{u}nneth formula for \'etale cohomology 
$$\H_{\et}^2(X_{\mathbb C_p},\Z/p)\simeq \H_{\et}^2(Z_{\mathbb C_p},\Z/p)\oplus \H_{\et}^2(W_{\mathbb C_p},\Z/p) \oplus \H_{\et}^1(Z_{\mathbb C_p},\Z/p)\otimes \H_{\et}^1(W_{\mathbb C_p},\Z/p)\simeq $$$$\Z/p\oplus \Z/p \oplus \Hom_{\mathbb C_p}(Z[p],W[p])$$
induces, since $\NS(X_{\mathbb C_p})\simeq \Z\oplus \Z \oplus \Hom_{\mathbb C_p}(Z,W)$,
a natural isomorphism
$$\Br(X_{\mathbb C_p})[p]\simeq \Hom_{\mathbb C_p}(Z[p],W[p])/\Hom_{\mathbb C_p}(Z,W),$$
see~\cite[Proposition~3.3]{SZtorsionEC}. 
Since $\H^i(\mathcal Z_{\Ocp},\Delta/p)$ and $\H^i(\mathcal W_{\Ocp},\Delta/p)$ are torsion-free by Proposition \ref{prop : basicpropertiesprismatic}, the K\"{u}nneth formula for prismatic cohomology (see e.g.\ \cite[Corollary 3.31]{Dieudonneprismatic}) gives an isomorphism 
\begin{align*}
\H^2(\mathcal X_{\Ocp},\Delta/p)&\simeq \H^2(\mathcal Z_{\Ocp},\Delta/p)\oplus \H^2(\mathcal W_{\Ocp},\Delta/p) \oplus \H^1(\mathcal Z_{\Ocp},\Delta/p)\otimes \H^1(\mathcal W_{\Ocp},\Delta/p)\\
&\simeq\mathcal O(-1)\oplus \mathcal O(-1)\oplus \H^1(\mathcal Z_{\Ocp},\Delta/p)\otimes \H^1(\mathcal W_{\Ocp},\Delta/p),
\end{align*}
by Theorem~\ref{thm : prismaticDieudonne}.
Hence, by Theorems \ref{thm : Brauerandhenselisation} and \ref{thm : prismaticinterpretationkernel}, it is enough to show that 
$$(\H^1(\mathcal Z_{\Ocp},\Delta/p)\otimes \H^1(\mathcal W_{\Ocp},\Delta/p))^{\varphi=d}\simeq \Hom_{\Ocp}(\mathcal Z[p],\mathcal W[p]).$$

By Theorem \ref{thm : prismaticDieudonne}, this is equivalent to showing that 
$$(\H^1(\mathcal Z_{\Ocp},\Delta/p)\otimes \H^1(\mathcal W_{\Ocp},\Delta/p))^{\varphi=d}\simeq \Hom_{\mathcal C}(\mathbb D(\mathcal W_{\Ocp}[p]),\mathbb D(\mathcal Z_{\Ocp}[p])).$$
For this, observe that there are natural isomorphisms compatible with the Frobenius
\begin{align*} 
\H^1(\mathcal Z_{\Ocp},\Delta/p)\otimes \H^1(\mathcal  W_{\Ocp},\Delta/p)&\simeq \Hom(\H^1(\mathcal  W_{\Ocp},\Delta/p)^{\vee},\H^1(\mathcal Z_{\Ocp},\Delta/p))\\
& \simeq \Hom(\mathbb D(\mathcal W_{\Ocp}[p])^{\vee},\mathbb D(\mathcal Z_{\Ocp}[p]))\\
&\simeq \Hom(\mathbb D(\mathcal  W_{\Ocp}[p]^{\vee})(1),\mathbb D(\mathcal  Z_{\Ocp}[p]))\\
&\simeq \Hom(\mathbb D(\mathcal W_{\Ocp}[p])(1),\mathbb D(\mathcal Z_{\Ocp}[p])),
\end{align*}
where the first isomorphism follows from the fact that $\H^1(\mathcal Z_{\Ocp},\Delta/p)$ and $\H^1(\mathcal W_{\Ocp},\Delta/p)$ are finite free, the second and the third from Theorem \ref{thm : prismaticDieudonne} and the last from the fact that $\mathcal W_{\Ocp}[p]$ is self dual. To conclude, observe that 
\begin{align*}
    (\Hom(\mathbb D(\mathcal W_{\Ocp}[p])(1),\mathbb D(\mathcal Z_{\Ocp}[p])))^{\varphi=d}&=(\Hom(\mathbb D(\mathcal W_{\Ocp}[p]),\mathbb D(\mathcal Z_{\Ocp}[p])))^{\varphi=1}\\
    &=\Hom_{\mathcal C}(\mathbb D(\mathcal W_{\Ocp}[p]),\mathbb D(\mathcal Z_{\Ocp}[p])). \qedhere
\end{align*}
\end{proof}
The next corollary concerning fields of definition of interesting Brauer classes follows immediately from Proposition~\ref{prop : elliptic curves}.
\begin{corollary}\label{cor : fields of def}
    Let $X = Z \times W$ for elliptic curves $Z, W$ and let $L/K$ be a field extension with $\Hom_L(Z[p],W[p])=\Hom_{\mathbb C_p}(Z[p],W[p])$. Then the natural map
    $$\Br (X_L)[p]/\fil_0\Br(X_L)[p]\to \Br(X_{\overline K})[p]/\Br(X_{\overline K})[p]^{\gb}$$
    is surjective.
\end{corollary}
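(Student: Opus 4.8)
The plan is to deduce this directly from Proposition~\ref{prop : elliptic curves}, using only that the hypothesis on $\Hom$-groups forces the base-change map on Brauer groups to be surjective modulo geometrically boring classes. First I would record the computation in the proof of Proposition~\ref{prop : elliptic curves} in a form valid over $L$: the K\"unneth formula for \'etale cohomology, the description of the geometric N\'eron--Severi group of a product of two elliptic curves, and Galois descent produce a natural map (in fact an isomorphism, though we will only need surjectivity onto a quotient)
\[
\Hom_L(Z[p],W[p])/\Hom_L(Z,W)\longrightarrow \Br(X_L)[p],
\]
compatible, under base change along $L\hookrightarrow\overline K$, with the identification $\Br(X_{\overline K})[p]\simeq \Hom_{\overline K}(Z[p],W[p])/\Hom_{\overline K}(Z,W)$ recalled after Proposition~\ref{prop : elliptic curves}. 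Composing with the surjection $\Br(X_{\overline K})[p]\twoheadrightarrow \Hom_{\mathbb C_p}(Z[p],W[p])/\Hom_{\Ocp}(\mathcal Z[p],\mathcal W[p])$ of Proposition~\ref{prop : elliptic curves} gives a commutative square relating $\Br(X_L)[p]$, $\Br(X_{\overline K})[p]$ and the two $\Hom$-quotients.

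Next I would check the two elementary facts that make the diagram chase work. On one hand, the $p$-torsion of $Z$ and $W$ is algebraic over $K$, so $\Hom_{\mathbb C_p}(Z[p],W[p])=\Hom_{\overline K}(Z[p],W[p])$; the hypothesis therefore reads $\Hom_L(Z[p],W[p])=\Hom_{\overline K}(Z[p],W[p])$, and since also $\Hom_L(Z,W)\subseteq \Hom_{\overline K}(Z,W)$ this shows that the composite $\Br(X_L)[p]\to \Br(X_{\overline K})[p]\to \Br(X_{\overline K})[p]/\Br(X_{\overline K})^{\gb}[p]$ is surjective. On the other hand, $\fil_0(X_L)=\Ker(\Br(X_L)[p]\to \Br(L(X)^{\sh})[p])$ by~\eqref{eq: fil0 as kernel to strict hens}, while by (the proof of) Theorem~\ref{thm : Brauerandhenselisation} the subgroup $\Br(X_{\overline K})^{\gb}[p]$ is the kernel of $\Br(X_{\overline K})[p]\to \H^2(\overline K(X)^{\sh},\Z/p(1))$; compatibility of the restriction maps along $L(X)^{\sh}\hookrightarrow\overline K(X)^{\sh}$ then shows that $\fil_0(X_L)$ maps into $\Br(X_{\overline K})^{\gb}[p]$ (equivalently, one may quote the \`\`$\subseteq$'' argument in the proof of Theorem~\ref{thm : Brauerandhenselisation}, which shows that a class in $\fil_0(X_L)$ has constant evaluation after a finite extension of $L$, hence becomes geometrically boring). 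Putting these together, the surjection above kills $\fil_0(X_L)$, so it factors through the map of the statement, which is therefore surjective.

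I do not expect a genuine obstacle: the corollary is essentially formal once Proposition~\ref{prop : elliptic curves} is in hand, which is why it is stated as an immediate consequence. The only point that needs a little attention is the first step --- transporting the K\"unneth/N\'eron--Severi description of the $p$-torsion of the Brauer group from $\mathbb C_p$ down to the possibly non-algebraically closed field $L$ in a base-change compatible way (in particular, keeping track of the Galois action on the K\"unneth summand $\Hom(Z[p],W[p])$) --- but this is exactly the computation already carried out, over $\mathbb C_p$, in the proof of Proposition~\ref{prop : elliptic curves}.
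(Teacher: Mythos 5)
Your proposal is correct and follows essentially the route the paper has in mind (the paper gives no explicit proof, saying only that the corollary follows from Proposition~\ref{prop : elliptic curves}). The two ingredients you identify are exactly the right ones: (i) a homomorphism $\sigma\in\Hom_L(Z[p],W[p])$ gives a class in $\Br(X_L)[p]$ compatibly with base change, so that the hypothesis $\Hom_L(Z[p],W[p])=\Hom_{\mathbb C_p}(Z[p],W[p])=\Hom_{\overline K}(Z[p],W[p])$ together with Proposition~\ref{prop : elliptic curves} gives surjectivity onto the target; and (ii) $\fil_0(X_L)$ maps into $\Br(X_{\overline K})^{\gb}[p]$, which follows from the compatibility of restriction to strict henselisations with~\eqref{eq: fil0 as kernel to strict hens} and the identification $\Br(X_{\overline K})^{\gb}[p]=\Ker(g)$ in the proof of Theorem~\ref{thm : Brauerandhenselisation}.

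One small inaccuracy: the parenthetical claim that $\Hom_L(Z[p],W[p])/\Hom_L(Z,W)\to \Br(X_L)[p]$ is \emph{an isomorphism} is not correct in general; for instance, constant classes in the image of $\Br(L)[p]$ do not lie in the image of this map since they die in $\Br(X_{\overline K})[p]$, while the source injects there. You correctly note that only a version of surjectivity is needed, so this does not affect the argument, but the remark should be dropped. Likewise, rather than invoking a K\"unneth formula directly over $L$ (whose terms involve the arithmetic of $L$ and are awkward to identify with $\Hom_L(Z[p],W[p])$), it is cleaner to construct the map by cup product: pull back the $Z[p]$- and $W[p]$-torsors $[p]\colon Z\to Z$ and $[p]\colon W\to W$ along the projections, pair them via $\sigma$ and the Weil pairing on $W$, and take the image of the resulting class in $H^2(X_L,\mu_p)$ inside $\Br(X_L)[p]$; this is defined over $L$ once $\sigma$ is, and specialises over $\mathbb C_p$ to the K\"unneth description used in the proof of Proposition~\ref{prop : elliptic curves}.
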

\begin{remark}\label{rmk : fields of def}
In Corollary~\ref{cor : fields of def}, one can take $L=K(Z[p],W[p])$, for example. For elliptic curves with complex multiplication by the ring of integers of an imaginary quadratic field $F$, this is related to the ray class field of $F$ with modulus $p$. Furthermore, if the CM field $F$ satisfies $\mathcal{O}_F^\times=\{\pm 1\}$, then~\cite[Proposition~2.2]{NewtonCM} shows that it suffices to take $L$ to be the compositum of $K$ with the ring class field of conductor $p$, an extension of $F$ of degree $h_F\cdot\left(p-\left(\frac{\delta_F}{p}\right)\right)$, where $h_F$ is the class number, $\delta_F$ is the discriminant, and $\left(\frac{\delta_F}{p}\right)$ is the Legendre symbol for $p$ odd and the Kronecker symbol for $p=2$.
\end{remark}
\begin{corollary}\label{cor : elliptic}
 Let $X = Z \times W$ for elliptic curves $Z, W$ with good reduction and N\'eron models $\mathcal Z$,$\mathcal W$, and write
 $$\Ker_{\mathcal Z}:=\Ker\Bigl(Z[p](\Ocp)\rightarrow \mathcal Z[p](\overline k)\Bigr),\quad \Ker_{\mathcal W}:=\Ker\Bigl(W[p](\Ocp)\rightarrow \mathcal W[p](\overline k)\Bigr).$$
 \begin{enumerate}
     \item Assume that the special fibres of $Z, W$ are ordinary. Then $\Br(X_{\overline K})[p]/\Br(X_{\overline K})[p]^{\gb}$ is one-dimensional, generated by any homomorphism sending an element in $\Ker_{\mathcal Z}$ to an element not in $\Ker_{\mathcal W}$.
     \item Assume that the special fibre of $W$ is ordinary but that of $Z$ is supersingular. Then $\Br(X_{\overline K})[p]/\Br(X_{\overline K})[p]^{\gb}$ is two-dimensional,  generated by the homomorphisms $Z[p]\rightarrow W[p]$ whose image is not contained in $\Ker_{\mathcal W}$.
     \item Assume that $Z=W$ and that the special fibre is supersingular. Then any element in $\End_{\mathbb C_p}(Z[p])$ whose characteristic polynomial has two distinct roots in $\mathbb F_p$ gives a non-zero element in $\Br(X_{\overline K})[p]/\Br(X_{\overline K})[p]^{\gb}$. In particular, the dimension of $\Br(X_{\overline K})[p]/\Br(X_{\overline K})[p]^{\gb}$ is at least one.
 \end{enumerate}
\end{corollary}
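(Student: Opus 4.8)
The plan is to feed the identification of Proposition~\ref{prop : elliptic curves} into a case-by-case study of the finite locally free group scheme $\mathcal E[p]$ over $\Ocp$. By that proposition, $\Br(X_{\overline K})[p]/\Br(X_{\overline K})^{\gb}[p]$ is the cokernel of the specialisation map $\Hom_{\Ocp}(\mathcal Z[p],\mathcal W[p])\hookrightarrow\Hom_{\mathbb C_p}(Z[p],W[p])$, injective because $\mathcal Z[p]$ is $\Ocp$-flat, with target $\Hom_{\mathbb F_p}(Z[p],W[p])\cong M_2(\mathbb F_p)$. First I would record: (i) $\Ocp$ is strictly henselian, so $\mathcal E[p]^{\et}\cong\mathbb Z/p$, and by Weil self-duality of $\mathcal E[p]$ and uniqueness of the connected--\'etale sequence, $\mathcal E[p]^0\cong(\mathcal E[p]^{\et})^\vee\cong\mu_p$ when the reduction is ordinary; (ii) an extension of $\mathbb Z/p$ by $\mu_p$ over $\Ocp$ killed by $p$ is split, since $\mathrm{Ext}^1_{\mathrm{fppf},\Ocp}(\mathbb Z/p,\mu_p)\cong\mu_p(\Ocp)$ -- using that $\Ocp^\times$ is $p$-divisible, whence $H^1_{\mathrm{fppf}}(\Ocp,\mu_p)=0$ -- and its nonzero classes are represented by group schemes not killed by $p$; thus $\mathcal E[p]\cong\mu_p\oplus\mathbb Z/p$ over $\Ocp$ for ordinary reduction; (iii) for supersingular reduction $\mathcal E[p]$ is connected and self-dual, with $\mathcal E[p](\Ocp)=E[p](\mathbb C_p)$ by the valuative criterion of properness. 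Also $\Ker_{\mathcal E}=\mathcal E[p]^0(\Ocp)$, one-dimensional in the ordinary case and all of $E[p](\mathbb C_p)$ in the supersingular case.

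For (1), $\mathcal Z[p]$ and $\mathcal W[p]$ are both $\mu_p\oplus\mathbb Z/p$, so $\Hom_{\Ocp}(\mathcal Z[p],\mathcal W[p])$ is the sum of $\Hom(\mu_p,\mu_p)$, $\Hom(\mathbb Z/p,\mathbb Z/p)$, $\Hom(\mathbb Z/p,\mu_p)$ and $\Hom(\mu_p,\mathbb Z/p)$, of $\mathbb F_p$-dimensions $1,1,1,0$ (the third using $\Hom_{\Ocp}(\mathbb Z/p,\mu_p)=\mu_p(\Ocp)$), so it is $3$-dimensional; its image in $M_2(\mathbb F_p)$ lies in the $3$-dimensional subspace $\{\sigma:\sigma(\Ker_{\mathcal Z})\subseteq\Ker_{\mathcal W}\}$, hence equals it, and the $1$-dimensional quotient is generated by the class of any $\sigma$ with $\sigma(\Ker_{\mathcal Z})\not\subseteq\Ker_{\mathcal W}$. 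For (2), $\mathcal W[p]=\mu_p\oplus\mathbb Z/p$ but $\mathcal Z[p]$ is connected, so $\Hom_{\Ocp}(\mathcal Z[p],\mathbb Z/p)=0$ (a homomorphism from a connected to an \'etale group scheme vanishes), while by Cartier duality and self-duality of $\mathcal Z[p]$, $\Hom_{\Ocp}(\mathcal Z[p],\mu_p)\cong\Hom_{\Ocp}(\mathbb Z/p,\mathcal Z[p])\cong\mathcal Z[p](\Ocp)=E[p](\mathbb C_p)$ is $2$-dimensional; its image lies in, hence equals, the $2$-dimensional $\{\sigma:\Image(\sigma)\subseteq\Ker_{\mathcal W}\}$, and the $2$-dimensional quotient, identified with $\Hom(Z[p],W[p]/\Ker_{\mathcal W})$, is generated by the homomorphisms with image not contained in $\Ker_{\mathcal W}$.

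For (3) I would use prismatic Dieudonn\'e theory (Theorem~\ref{thm : prismaticDieudonne}): $\mathrm{End}_{\Ocp}(\mathcal Z[p])\cong\mathrm{End}_{\mathcal C}(\mathbb D(\mathcal Z[p]))$, a ring containing $\mathbb F_p\cdot\mathrm{id}$. Suppose $\psi\in\mathrm{End}_{\mathbb C_p}(Z[p])=M_2(\mathbb F_p)$ has characteristic polynomial $(T-\lambda_1)(T-\lambda_2)$ with $\lambda_1\neq\lambda_2$ in $\mathbb F_p$ and lifts to $\tilde\psi\in\mathrm{End}_{\Ocp}(\mathcal Z[p])$. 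The characteristic polynomial of $\mathbb D(\tilde\psi)$ on the free rank-$2$ module $\mathbb D(\mathcal Z[p])$ is again $(T-\lambda_1)(T-\lambda_2)$, as one sees after inverting $d$ via the \'etale comparison (the $\lambda_i$ lie in $\mathbb F_p$), so by Cayley--Hamilton $e:=(\lambda_1-\lambda_2)^{-1}(\tilde\psi-\lambda_2)\in\mathrm{End}_{\Ocp}(\mathcal Z[p])$ is an idempotent with $e\neq 0,1$. An idempotent endomorphism splits $\mathcal Z[p]$ as $G_1\times G_2$ with $G_i$ of order $p$, so the special fibre decomposes: $\mathcal Z_{\overline k}[p]\cong(G_1)_{\overline k}\times(G_2)_{\overline k}$. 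But $\mathcal Z_{\overline k}[p]$ is connected with non-zero Frobenius (the formal group of a supersingular elliptic curve has height $2$), whereas a connected product of two order-$p$ group schemes over $\overline k$ has both factors in $\{\mu_p,\alpha_p\}$, on each of which Frobenius vanishes -- a contradiction. Hence $\psi$ does not lift and is therefore nonzero in $\Br(X_{\overline K})[p]/\Br(X_{\overline K})^{\gb}[p]$; taking $\psi=\mathrm{diag}(0,1)$ gives the final assertion. The main obstacle is not an estimate but the group-scheme bookkeeping over $\Ocp$: the splitting $\mathcal E[p]\cong\mu_p\oplus\mathbb Z/p$ in the ordinary case, the connected--\'etale and Cartier-duality manipulations in (1)--(2), and in (3) transporting the idempotent through $\mathbb D$ and down to the supersingular special fibre.
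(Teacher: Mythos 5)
Your proposal is correct and follows essentially the same route as the paper: feed Proposition~\ref{prop : elliptic curves} into a case analysis of $\mathcal E[p]$ over $\Ocp$, using the splitting $\mathcal E[p]\simeq\mu_p\times\Z/p$ in the ordinary case to compute the four $\Hom$ spaces, and in the supersingular case reducing part (3) to the nonexistence of nontrivial idempotents in $\End_{\Ocp}(\mathcal Z[p])$. The only (harmless) differences are that you justify the ordinary splitting and the Cayley--Hamilton production of the idempotent in more detail than the paper, and you pass through prismatic Dieudonn\'e theory in part (3) where the injectivity of $\End_{\Ocp}(\mathcal Z[p])\hookrightarrow\End_{\mathbb C_p}(Z[p])$ already suffices.
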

\begin{proof}
    \begin{enumerate}
    \item[]
        \item Since the special fibres of $Z,W$ are ordinary,
	$$\mathcal Z_{\Ocp}[p]\simeq \mu_p\times \Z/p\simeq \mathcal W_{\Ocp}[p].$$
	Hence, $\Hom_{\Ocp}(\mathcal Z[p], \mathcal W[p])$ can be written as
	$$ \End(\Z/p)\oplus \End(\mu_p)\oplus \Hom_{\Ocp}(\Z/p,\mu_p)\oplus \Hom_{\Ocp}(\mu_p,\Z/p).$$
	Observe that $\End_{\Ocp}(\Z/p)\simeq \End_{\Ocp}(\mu_p)\simeq \Z/p$, generated by the identity. Furthermore, $\Hom_{\Ocp}(\Z/p,\mu_p)\simeq \Z/p$, generated by the morphism sending $1$ to a primitive $p^{th}$-root of unity. By contrast, $\Hom_{\Ocp}(\mu_p,\Z/p)=0$, since $\mu_p$ is connected and $\Z/p$ is totally disconnected.
	Since $\Ker\Bigl(\mathcal Z[p](\Ocp))\rightarrow \mathcal Z[p](\overline k)\Bigr)=\mu_p(\mathbb C_p)$ and similarly for $\mathcal W$, Proposition \ref{prop : elliptic curves} shows that $\Br(X_{\overline K})[p]/\Br(X_{\overline K})[p]^{\gb}$ is one-dimensional, generated by any homomorphism sending an element in $\Ker\Bigl(Z[p](\Ocp)\rightarrow \mathcal Z[p](\overline k)\Bigr)$ to an element not in $\Ker_\mathcal W$.
    \item Since the special fibre of $W$ is ordinary but that of $Z$ is not,
	$$\mathcal W_{\Ocp}[p]\simeq \mu_p\times \Z/p,\quad \text{while }\ \mathcal Z_{\Ocp}[p] \text{ is connected.}$$
	Hence, 
	$$\Hom_{\Ocp}(\mathcal Z[p], \mathcal W[p])\simeq  \Hom_{\Ocp}(\mathcal Z[p], \Z/p)\oplus \Hom_{\Ocp}(\mathcal Z[p], \mu_p).$$
	Since $\mathcal Z_{\Ocp}[p]$ is connected, $\Hom_{\Ocp}(\mathcal Z[p], \Z/p)=0$, while 
	$$\Hom_{\Ocp}(\mathcal Z[p], \mu_p)=\Hom_{\Ocp}(\Z/p,\mathcal Z[p]^{\vee})\simeq \mathcal Z[p]^{\vee}(\Ocp)\simeq (\Z/p\Z)^2.$$

	Since $\Ker\Bigl(W[p](\Ocp)\rightarrow \mathcal W[p](\overline k)\Bigr)=\mu_p(\Ocp)$, Proposition \ref{prop : elliptic curves} shows that the quotient $\Br(X_{\overline K})[p]/\Br(X_{\overline K})[p]^{\gb}$ is two-dimensional, generated by the homomorphisms $Z[p]\rightarrow W[p]$ whose image is not contained in $\Ker_\mathcal W$.
    \item Since $Z$ has supersingular reduction, $\mathcal Z[p]_{\overline k}$ is not the product of two subgroups, hence also $\mathcal Z_{\Ocp}[p]$ is not. In particular, $\End_{\Ocp}(\mathcal Z[p])$ contains no idempotents apart from $ 0$ and $1$. Hence, Proposition \ref{prop : elliptic curves} shows that all non-zero multiples of the idempotent elements different from $0$ and $1$ in $\End_{\mathbb C_p}(Z[p])$ give non-zero elements in the quotient $\Br(X_{\overline K})[p]/\Br(X_{\overline K})[p]^{\gb}$. Since the scalars in $\End_{\mathbb C_p}(Z[p])\simeq M_2(\mathbb F_p)$ correspond to elements in $\Br(X_{\overline K})[p]^{\gb}$, any element in $\End_{\mathbb C_p}(Z[p])$ whose characteristic polynomial has two distinct roots in $\mathbb F_p$ gives a non-zero element in $\Br(X_{\overline K})[p]/\Br(X_{\overline K})[p]^{\gb}$. \qedhere
    \end{enumerate}
\end{proof}

\begin{remark}
    One can also obtain a result similar to Corollary~\ref{cor : elliptic}(3) given an isogeny of degree coprime to $p$ between elliptic curves $Z$ and $W$ with supersingular reduction.
\end{remark}
\begin{remark}
    Corollary~\ref{cor : elliptic} sheds light on existing results in the literature. For example, Corollary~\ref{cor : elliptic}(1) explains the existence of the  arithmetically interesting $3$-torsion Brauer class in~\cite[Example~4.12]{margherita2} and shows that all $3$-torsion classes with non-constant evaluation maps at primes above $3$ are scalar multiples of this one. See also Section~\ref{subsubsec : compare literature}.
\end{remark}
    \subsubsection{The CM case}\label{subsubsec : CM}
We now specialise to the situation in which $X=Z\times Z$ for an elliptic curve $Z$ with complex multiplication (CM) by the ring of integers $\mathcal O_L$ of an imaginary quadratic number field $L$. 
Writing $\delta$ for the discriminant of $\Q(\sqrt{-d})$, a generator for the ring of integers of $\Q(\sqrt{-d})$ is given by $\gamma=(\delta+\sqrt{\delta})/2$. 
Fix a $\Z/p$-basis for $Z[p](\mathbb C_p)$ of the form $P,\gamma P$ for some $P\in Z[p](\mathbb C_p)$. With respect to this basis, multiplication by $\gamma$ on $Z[p](\mathbb C_p)$
is given by the following matrix with entries in $\Z/p$:
\[\begin{pmatrix}0& \ \ \ \ \frac{\delta(1-\delta)}{4}\\ 1 & \delta\end{pmatrix}.\]
(Note that $\frac{\delta(1-\delta)}{4}\in\Z$ so it has a well-defined image in $\Z/p$ for all primes $p$, including $p=2$.)
\begin{corollary}\label{cor : CM}
    Let $X=Z\times Z$ for an elliptic curve $Z$ with CM by $\mathcal{O}_L=\Z[\gamma]$.
    \begin{enumerate}
        \item Suppose that $p\neq 2$ and let $\sigma\in \End_{\mathbb C_p}(Z[p])$ be induced by complex conjugation. Then $\sigma$ corresponds to a non-zero element in $\Br(X_{\overline K})[p]/\Br(X_{\overline K})[p]^{\gb}$.
        \item Suppose that $Z$ has supersingular reduction. If $p$ is odd or $p\nmid \delta$ then $\Br(X_{\overline K})[p]^{\gb}$ is trivial and the dimension of $\Br(X_{\overline K})[p]/\Br(X_{\overline K})[p]^{\gb}$ is two.
    \end{enumerate}
\end{corollary}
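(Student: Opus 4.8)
The strategy is to run everything through Proposition~\ref{prop : elliptic curves} applied to $X=Z\times Z$. Write $R:=\End_{\Ocp}(\mathcal Z[p])$ and let $\mathbb F_p[\bar\gamma]\subseteq \End_{\mathbb C_p}(Z[p])=M_2(\mathbb F_p)$ be the image of $\End_{\mathbb C_p}(Z)=\mathcal O_L$, a two-dimensional $\mathbb F_p$-subalgebra isomorphic to $\mathcal O_L/p\mathcal O_L$ and generated by the reduction of the matrix displayed above. Since $Z$ has good reduction the $\mathcal O_L$-action extends to the N\'eron model, so $\mathbb F_p[\bar\gamma]\subseteq R\subseteq M_2(\mathbb F_p)$; combining the identification $\Br(X_{\overline K})[p]\simeq M_2(\mathbb F_p)/\mathbb F_p[\bar\gamma]$ with Proposition~\ref{prop : elliptic curves} (which reads $\Br(X_{\overline K})[p]/\Br(X_{\overline K})^{\gb}[p]\simeq M_2(\mathbb F_p)/R$) gives $\Br(X_{\overline K})^{\gb}[p]\simeq R/\mathbb F_p[\bar\gamma]$. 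Hence a class represented by $\psi\in\End_{\mathbb C_p}(Z[p])$ is nonzero in $\Br(X_{\overline K})[p]/\Br(X_{\overline K})^{\gb}[p]$ exactly when $\psi\notin R$, and $\Br(X_{\overline K})^{\gb}[p]=0$ exactly when $R=\mathbb F_p[\bar\gamma]$.

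For (1), $\sigma$ is the automorphism of $Z[p]\cong\mathcal O_L/p\mathcal O_L$ induced by the nontrivial $c\in\Gal(L/\mathbb Q)$; being a reflection it is an involution of trace $0$ and determinant $-1$, so for $p$ odd it is not a scalar matrix. By the previous paragraph it suffices to show $\sigma\notin R$. If $Z$ has supersingular reduction, then $e:=\frac12(1+\sigma)$ is a nontrivial idempotent of $M_2(\mathbb F_p)$; were $\sigma\in R$ then $e\in R$, contradicting the fact that $R$ has no nontrivial idempotents (Corollary~\ref{cor : elliptic}(3)). If $Z$ has ordinary reduction, then $p$ splits in $\mathcal O_L$, say $p\mathcal O_L=\mathfrak p\bar{\mathfrak p}$, and by the theory of CM the connected component $\mathcal Z[p]^0$ of the N\'eron $p$-torsion has generic fibre $Z[\mathfrak p]$ for one of the two primes. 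Every endomorphism of $\mathcal Z[p]$ preserves $\mathcal Z[p]^0$, hence preserves $Z[\mathfrak p]$ on the generic fibre; but $\sigma(Z[\mathfrak p])=Z[c(\mathfrak p)]=Z[\bar{\mathfrak p}]\ne Z[\mathfrak p]$, so $\sigma\notin R$. In both cases $\sigma$ gives a nonzero class, proving (1).

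For (2), it is enough to prove $R=\mathbb F_p[\bar\gamma]$, i.e.\ $\dim_{\mathbb F_p}R\le 2$ since $\mathbb F_p[\bar\gamma]\subseteq R$ is already two-dimensional. Supersingularity gives that $\mathcal Z[p]$ is indecomposable, so $R$ is a finite-dimensional $\mathbb F_p$-algebra with no nontrivial idempotents (Corollary~\ref{cor : elliptic}(3)), hence local; and $R\hookrightarrow \End_{\mathbb C_p}(Z[p])=M_2(\mathbb F_p)$ because the generic fibre $Z[p]$ is schematically dense in $\mathcal Z[p]$. The bound $\dim_{\mathbb F_p}R\le2$ is then the elementary fact that a local $\mathbb F_p$-subalgebra of $M_2(\mathbb F_p)$ has dimension at most $2$: its Jacobson radical is an $\mathbb F_p$-subspace of nilpotent $2\times2$ matrices, hence of dimension $\le1$, while its residue field is $\mathbb F_p$ or $\mathbb F_{p^2}$, and in the latter case it contains $\mathbb F_{p^2}$, which a dimension count inside the $4$-dimensional space $M_2(\mathbb F_p)$ forces it to equal. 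Therefore $\Br(X_{\overline K})^{\gb}[p]=0$, and consequently $\dim_{\mathbb F_p}\bigl(\Br(X_{\overline K})[p]/\Br(X_{\overline K})^{\gb}[p]\bigr)=\dim_{\mathbb F_p}\Br(X_{\overline K})[p]=\dim_{\mathbb F_p}M_2(\mathbb F_p)-\dim_{\mathbb F_p}\mathbb F_p[\bar\gamma]=2$.

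The main obstacle is not a single hard step but the integral bookkeeping: identifying $\Br^{\gb}[p]$ with $R/\mathbb F_p[\bar\gamma]$ via the extension of the CM action to the N\'eron model; in (1), the facts that an endomorphism of $\mathcal Z[p]$ must preserve the connected component and that this component has generic fibre one of the $\mathfrak p$-torsion lines (standard CM theory, but worth spelling out); and in (2), the passage ``no nontrivial idempotents $\Rightarrow$ local'' together with the classification of local subalgebras of $M_2(\mathbb F_p)$. The hypothesis ``$p$ odd or $p\nmid\delta$'' is what lets us avoid the extra $2$-torsion group-scheme subtleties of the wildly ramified case $p=2\mid\delta$.
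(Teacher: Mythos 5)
Your proof is correct, and for part (2) it takes a genuinely different and cleaner route than the paper's. Let me compare.

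For part (1), your argument is essentially the paper's. In the supersingular case both proofs ultimately rest on the fact, established in the proof of Corollary~\ref{cor : elliptic}(3), that $R:=\End_{\Ocp}(\mathcal Z[p])$ has no nontrivial idempotents (the paper applies the distinct-roots criterion of Corollary~\ref{cor : elliptic}(3) to $\sigma$, which has characteristic polynomial $(X-1)(X+1)$; you pass to the idempotent $\tfrac12(1+\sigma)$ directly — same content). In the ordinary case the paper shows $\mu_p(\mathbb C_p)$ is a $\gamma$-eigenline not preserved by $\sigma$; you phrase the same fact as $\mathcal Z[p]^0$ having generic fibre $Z[\mathfrak p]$ and $\sigma$ swapping $Z[\mathfrak p]$ and $Z[\bar{\mathfrak p}]$. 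These are the same argument in two dialects.

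For part (2), the paper picks an explicit basis $e_1,e_2,\gamma,\Id$ of $M_2(\F_p)$ and for each $a e_1 + b e_2$ hunts for $t\in\F_p$ making the characteristic polynomial of $a e_1 + b e_2 + t\gamma$ split with distinct roots, then invokes Corollary~\ref{cor : elliptic}(3); this case analysis is where the hypothesis ``$p$ odd or $p\nmid\delta$'' enters, since in the ramified case with $p=2$ the relevant polynomial $X^2 - tb$ never has two distinct roots over $\F_2$. Your argument replaces this entirely with a structural bound: since $\mathcal Z[p]$ is indecomposable, $R$ has no nontrivial idempotents, so (idempotents lift modulo the nilpotent radical, and $R/J(R)$ is a finite division ring by Wedderburn) $R$ is local; a local $\F_p$-subalgebra of $M_2(\F_p)$ has dimension $\le 2$ (either by your radical/residue-field count, or simply because $3$-dimensional subalgebras are conjugate to the upper-triangular algebra and $M_2$ itself is not local — both have idempotents); and since $\F_p[\bar\gamma]\subseteq R$ is already $2$-dimensional, $R=\F_p[\bar\gamma]$ and $\Br(X_{\overline K})^{\gb}[p]\cong R/\F_p[\bar\gamma]=0$. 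This buys you a cleaner proof with no $\delta$ bookkeeping. In fact, as far as I can see your argument for (2) never actually invokes the hypothesis ``$p$ odd or $p\nmid\delta$'' — so you are either proving a slightly stronger statement (which is worth saying explicitly rather than asserting, as your last sentence does, that the hypothesis is needed to avoid ``$2$-torsion subtleties''), or you should double-check whether the hypothesis hides in an input you are using (it does not appear to: Proposition~\ref{prop : elliptic curves}, the extension of the CM action to the N\'eron model, and the indecomposability of $\mathcal Z[p]$ all hold for $p=2\mid\delta$). Either way, your proof is correct as written; the relation between your hypothesis-free argument and the paper's restricted one deserves a remark rather than a hand-wave.
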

\begin{proof}
    \begin{enumerate}
        \item[] 
        \item If $Z$ has supersingular reduction, then this follows from Corollary~\ref{cor : elliptic}, since $\sigma^2=1$ and $p\neq 2$ (so $\sigma\neq 1$). Now suppose that $Z$ has ordinary reduction. By Corollary~\ref{cor : elliptic}(1), since $$\Ker\Bigl(\mathcal Z[p](\Ocp)\rightarrow \mathcal Z[p](\overline k)\Bigr)=\mu_p(\mathbb C_p)$$ is one-dimensional and stable by all the endomorphisms of $\mathcal Z_{\Ocp}[p]$, it is enough to show that $\mu_p(\mathbb C_p)$ is not stable under the action of $\sigma$. The group $\mu_p(\mathbb C_p)$ is stable by the action of $\gamma$, hence it is an eigenspace for $\gamma$. Since $Z$ has ordinary reduction, $\delta$ is a square modulo $p$. Since the eigenvalues of $\gamma$ are $\delta\pm\sqrt\delta$ and $\delta$ is negative, the eigenvectors for $\gamma$ are swapped by the action of $\sigma$. Consequently, $\mu_p(\mathbb C_p)$ is not preserved by the action of $\sigma$ and therefore $\sigma$ does not lift to an endomorphism of $\mathcal Z_{\Ocp}[p]$.
        \item Suppose that $p$ is odd or $p\nmid \delta$. Let $e_1, e_2$ be idempotent endomorphisms of $Z_{\mathbb C_p}[p]$ defined by the following matrices with respect to our fixed basis:
\[e_1=\begin{pmatrix}0 & 0\\
0 & 1
\end{pmatrix}, \ e_2= \begin{pmatrix} 0 & 1\\
0 & 1
\end{pmatrix}.\]
Since $e_1,e_2,\gamma, \Id$ is a basis of $\End_{\mathbb C_p}(Z[p])$ and $\gamma, \Id$ is a basis of $\End_{\mathbb C_p}(Z)$, it suffices to prove that all non-zero linear combinations $ae_1+be_2\in \End_{\mathbb C_p}(Z[p])$, with $a,b\in\mathbb F_p$, do not lift to $\End_{\Ocp}(\mathcal Z[p])$. 
The characteristic polynomial of $ae_1+be_2$ is $X(X-a-b)$ so if $a+b\neq 0$, the result follows from Corollary \ref{cor : elliptic}(3). Now suppose that $a+b=0$, so $\theta:=ae_1+be_2=\begin{pmatrix}0& b\\ 0 & 0\end{pmatrix}$. Note that $\theta$ lifts to $\End_{\Ocp}(\mathcal Z[p])$ if and only if $\theta+t\gamma$ lifts to $\End_{\Ocp}(\mathcal Z[p])$ for all $t\in\F_p$. Now, the characteristic polynomial of $\theta+t\gamma$ is 
 \[f(X)=X(X-t\delta)-t\left(b+t\left(\frac{\delta(1-\delta)}{4}\right)\right).\] 
 If we can find $t\in\F_p$ such that $f(X)$ splits into two linear factors over $\F_p$, then we are done by Corollary \ref{cor : elliptic}(3). A theorem of Shimura and Taniyama states that $E$ has supersingular reduction if and only if $p$ does not split in $\Q(\sqrt{-d})/\Q$. If $p$ ramifies in $\Q(\sqrt{-d})/\Q$ then $p\mid\delta$ (whereby $p$ is odd by assumption) and $f(X)$ becomes $X^2-tb$. If we take $t=b$ then $f(X)$ splits, as required. Now suppose that $p$ is inert in $\Q(\sqrt{-d})/\Q$. 
Then $p$ does not divide $\frac{1-\delta}{4}$ and we can take $t\in\F_p^\times$ such that $b+t\left(\frac{\delta(1-\delta)}{4}\right)=0$. Thus, $f(X)$ becomes $X(X-t\delta)$, which is split. 

The assertion on the dimension of $\Br(X_{\overline K})[p]/\Br(X_{\overline K})[p]^{\gb}$ is immediate, as $\Br(X_{\overline K})[p]\cong (\mathbb{Z}/p)^2$ by work of Grothendieck, 
see~\cite[Proposition~5.2.9]{CoSko}. \qedhere
    \end{enumerate}
\end{proof}

\begin{remark}
    If $X=Z_1\times Z_2$ where 
        $Z_1, Z_2$ are elliptic curves with CM by orders $\mathcal{O}_1,\mathcal{O}_2$ in $L$, then Corollary~\ref{cor : CM} applies with the additional constraint that the prime $p$ be coprime to $[\mathcal{O}_L:\mathcal{O}_i]$ for $i=1,2$.
\end{remark}
\subsubsection{Comparison with previous work.}\label{subsubsec : compare literature}
Products of CM elliptic curves (and their associated Kummer surfaces) have been the source of many examples of transcendental Brauer classes obstructing weak approximation in the literature to date. 
The results in~\cite{SkoroIeronymou, ErrataIeronymouSkoro}  
are obtained by relating diagonal quartic surfaces to products of curves with CM by $\mathbb{Z}[i]$. Products of elliptic curves over $\mathbb{Q}$ with CM by other maximal orders have been studied in~\cite{NewtonCM, NewtonMK3}. All these examples of transcendental Brauer classes obstructing weak approximation are related to complex conjugation as in Corollary~\ref{cor : CM}(1), possibly after composing with a geometric isomorphism $Z_{\mathbb{\bar{Q}}}\xrightarrow[]{\simeq} W_{\mathbb{\bar{Q}}}$. This is spelt out in~\cite[Proposition~4.6]{NewtonMK3} and \cite[Lemma~2.3.3]{Mohamed}, for example. 

Note that in the examples in~\cite{SkoroIeronymou, ErrataIeronymouSkoro, NewtonCM, NewtonMK3}, 
the relevant Brauer classes have order $p$ where $p$ is an odd prime of bad, but potentially good, reduction for the abelian surface. 
In the good reduction setting, Corollary~\ref{cor : CM}(1) shows that $p$-torsion Brauer classes coming from complex conjugation have non-constant (in fact, surjective -- see Proposition~\ref{prop : braueruseful}) evaluation maps at primes above $p$ over all finite extensions. In the ordinary reduction cases, (including~$p=5$ in \cite[Theorem~1.1]{SkoroIeronymou}, all cases of~\cite[Theorem~1.3]{NewtonMK3}, and the case $\ell=7$ of~\cite[Theorem~1.4]{NewtonMK3}), 
Corollary~\ref{cor : elliptic}(1) explains why the only arithmetically interesting Brauer classes come from (scalar multiples of) complex conjugation. Moreover, Corollary~\ref{cor : CM} shows that the constant evaluation map of the element $\mathcal{A}\in\Br(D)[3]$ for $D$ of type I in~\cite[Theorem~2.3]{ErrataIeronymouSkoro} is merely a temporary phenomenon -- the evaluation map will become surjective after passing to a finite extension where the surface attains good reduction above $3$. This example shows that good reduction is a necessary hypothesis in Proposition~\ref{prop : braueruseful}. 
\subsection{Abelian varieties of positive $p$-rank and associated Kummer varieties}\label{subsec : abelian varieties}
In this section we prove Theorem \ref{thm : abelian and kumm varieties}. Let $K/\Q_p$ be a $p$-adic field and $X/K$ an abelian variety of dimension $g\geq 2$ with  good reduction. Following \cite{SkoroZarhinKummer}, given a $K$-torsor $T$ for the $K$-group scheme $X[2]$, we define the associated $2$-covering of $X$ as the quotient $Y:=(X\times_K T)/X[2]$ by the diagonal action of $X[2]$. The antipodal involution $\iota_X$ on $X$ induces an involution $\iota_Y\colon Y\rightarrow Y$. Let $\sigma \colon Y'\rightarrow Y$ be the blowing-up of the $2^{2g}$-point closed subscheme $T\subseteq Y$. The involution $\iota_Y$ preserves $T$ and so gives rise to an involution $\iota_{Y'}$ on $Y'$. The Kummer variety attached to $Y$ is defined as the quotient $Y'/\iota_{Y'}=:\mathrm{Kum}(X_T)$ and it is a smooth proper variety. If $p$ is odd then $\mathrm{Kum}(X_T)$ has good reduction, since $X$ has good reduction.

We write $\pi\colon Y'\rightarrow \mathrm{Kum}(X_T)$ for the double covering whose branch locus is $E:=\sigma^{-1}(T)$. 
In \cite[Proposition~2.7]{SkoroZarhinKummer}, Skorobogatov and Zarhin prove that the morphisms $\pi$ and $\sigma$ induce an isomorphism of $\Gal(\bar{K}/K)$-modules 
\begin{equation}\label{eq : BrKum}
     \theta\colon \Br(\mathrm{Kum}(X_T)_{\bar{K}})\rightarrow \Br(X_{\bar{K}}).
\end{equation}
Note that $\theta$ induces an injection $\Br(\mathrm{Kum}(X_T)_{\bar{K}})[p]^{\gb}\hookrightarrow\Br(X_{\overline K})[p]^{\gb}$, by Theorem~\ref{thm : Brauerandhenselisation}.

\begin{proof}[Proof of Theorem \ref{thm : abelian and kumm varieties}]
    We begin with the statement for the abelian variety $X$. By Theorems \ref{thm : Brauerandhenselisation} and \ref{thm : prismaticinterpretationkernel}, it is enough to show that $$\rank_{\Ocpflat}(\H^2(\mathcal{X}_{\Ocp},\Delta/p))-\Dim_{\mathbb F_p}(\H^2(\mathcal{X}_{\Ocp},\Delta/p)^{\varphi=d})\geq 2g-1-e.$$
Since $e>0$, there is a surjection $\mathcal X[p]\rightarrow \mathbb Z/p$ with finite locally free kernel. Since $\mathcal X$ has a polarisation of degree prime to p, $\mathcal X[p]$ is self-dual and therefore there is a short exact sequence of finite locally free group schemes 
 $$0\rightarrow \mu_p\rightarrow \mathcal X[p]\rightarrow J\rightarrow 0,$$ 
 such that $\rank(J^{\et})=e$. 
 By Theorem \ref{thm : prismaticDieudonne}, this induces a short exact sequence
$$0\rightarrow \mathbb D(J)\rightarrow \mathbb D(\mathcal X[p])\rightarrow \mathcal O(-1)\rightarrow 0$$
in $\mathcal C$. Applying $\wedge^{2}$ and using Theorem \ref{thm : prismaticDieudonne}, we get an exact sequence 
$$0\rightarrow \wedge^2\mathbb D(J)\rightarrow \H^2(\Delta/p)\rightarrow \mathbb D(J)\otimes \mathcal O(-1)\rightarrow 0.$$
Hence, it is enough to show that 
$$\Dim_{\mathbb F_p}((\mathbb D(J)\otimes \mathcal O(-1))^{\varphi=d})=e .$$
But 
$$\Dim_{\mathbb F_p}((\mathbb D(J)\otimes \mathcal O(-1))^{\varphi=d})=\Dim_{\mathbb F_p}(\mathbb D(J)^{\varphi=1})\stackrel{(*)}{=}\rank(J^{\et})=e,$$
where (*) follows from Theorem \ref{thm : prismaticDieudonne}(5). 

We now consider the Kummer variety associated to the $X[2]$-torsor $T$.
By Theorem~\ref{thm : Brauerandhenselisation}, it is enough to prove that 
    \[
    \mathrm{Im}(\H^2(\mathrm{Kum}(X_T)_{\bar{K}},\Z/p)\rightarrow \H^2(\bar{K}(\mathrm{Kum}(X_T))^{\mathrm{sh}},\Z/p))\simeq \mathrm{Im}(\H^2(X_{\bar{K}},\Z/p)\rightarrow \H^2(\bar{K}(X)^{\mathrm{sh}},\Z/p)).
    \]
    As observed at the beginning of Section~\ref{subsec : proofofbrauerhens}, these maps factor through the $p$-torsion of the relevant Brauer groups, and these groups are isomorphic by~\eqref{eq : BrKum}. The result follows from the fact that $\bar{K}(X)/\bar{K}(\mathrm{Kum}(X_T))$ is a degree $2$ extension, and therefore the map $\H^2(\bar{K}(\mathrm{Kum}(X_T))^{\mathrm{sh}},\Z/p))\rightarrow \H^2(\bar{K}(X)^{\mathrm{sh}},\Z/p))$ is injective for $p>2$.
\end{proof}

\begin{remark}

     The problem of constructing even a single transcendental Brauer class giving an obstruction has hitherto been considered something of a challenge. Our results in the non-ordinary reduction setting give various instances of K3 surfaces having at least two independent transcendental elements of odd order $p$ that play a role in the Brauer--Manin obstruction to weak approximation. For example, one can take $X$ to be an abelian surface with $p$-rank one in Theorem~\ref{thm : abelian and kumm varieties}, or alternatively consider $\mathrm{Kum}(Z\times Z)$ where $Z$ is a CM elliptic curve with supersingular reduction and apply Corollary~\ref{cor : CM}(2) together with the last part of the proof of Theorem~\ref{thm : abelian and kumm varieties}.
\end{remark}
\printbibliography
 \end{document}